\documentclass[twoside,11pt]{article}

\usepackage[utf8]{inputenc}
\usepackage[T1]{fontenc}
\usepackage{url}
\usepackage{booktabs}
\usepackage{amsfonts}
\usepackage{nicefrac}
\usepackage{microtype}
\usepackage{xcolor}
\usepackage{enumitem}

\usepackage{amsmath}
\usepackage{amsthm} %
\usepackage{amssymb}
\usepackage{amstext}
\usepackage{amsfonts}
\theoremstyle{plain}
\newtheorem{theorem}{Theorem}
\newtheorem{lemma}[theorem]{Lemma}
\newtheorem{proposition}[theorem]{Proposition}
\newtheorem{corollary}[theorem]{Corollary}

\theoremstyle{definition}

\newtheorem{definition}[theorem]{Definition}
\newtheorem{example}[theorem]{Example}

\newtheorem{remark}[theorem]{Remark}

\newtheorem*{definition*}{Definition}
\newcommand{\jmin}{J_{x^\star_{\min}}}
\newcommand{\NN}{\mathbb{N}}

\usepackage{tikz}
\usetikzlibrary{positioning,shadows,fit,decorations.pathmorphing,matrix}
\usepackage{wrapfig}

\newcommand{\tp}{^{\textnormal{\textsf{T}}}}

\newcommand{\mA}{\mathcal{A}}
\newcommand{\mB}{\mathcal{B}}
\newcommand{\mC}{\mathcal{C}}
\newcommand{\mD}{\mathcal{D}}

\newcommand{\mI}{\mathcal{I}}
\newcommand{\mJ}{\mathcal{J}}

\newcommand{\mL}{\mathcal{L}}

\newcommand{\mO}{\mathcal{O}}

\newcommand{\mQ}{\mathcal{Q}}
\newcommand{\mR}{\mathcal{R}}
\newcommand{\mS}{\mathcal{S}}
\newcommand{\mT}{\mathcal{T}}
\newcommand{\mU}{\mathcal{U}}
\newcommand{\mV}{\mathcal{V}}

\newcommand{\mZ}{\mathcal{Z}}

\newcommand{\mbN}{\mathbb{N}}

\newcommand{\mbR}{\mathbb{R}}

\newcommand{\etz}{\varepsilon \rightarrow 0}

\newcommand{\xseps}{\xs_{\varepsilon}}
\newcommand{\xsepst}{\xs_{\varepsilon}(\theta)}

\newcommand{\range}{\mathrm{range}}

\newcommand{\sign}{\mathrm{sign}}

\newcommand{\prox}{\mathrm{prox}}

\newcommand{\dist}{\mathrm{dist}}
\newcommand{\setvalued}{\rightrightarrows}
\DeclareMathOperator*{\argmin}{argmin}

\newcommand{\xs}{{x}^\star}
\newcommand{\minsol}{x^\star_{\mathrm{min}}}
\newcommand{\diag}{\mathrm{diag}}
\newcommand{\conv}[1]{\mbox{conv}{#1}}
\newcommand{\eps}{\varepsilon}

\def\eqref#1{{\rm (\ref{#1})}}
\def\lemref#1{{\rm Lemma~\ref{#1}}}
\def\propref#1{{\rm Proposition~\ref{#1}}}
\def\thmref#1{{\rm Theorem~\ref{#1}}}
\def\defref#1{{\rm Definition~\ref{#1}}}
\def\secref#1{{\rm Section~\ref{#1}}}

\def\remref#1{{\rm Remark~\ref{#1}}}
\def\corref#1{{\rm Corollary~\ref{#1}}}

\newcommand{\Heps}{H_{\varepsilon}}
\newcommand{\Meps}{M_{\varepsilon}}
\newcommand{\Qeps}{Q_{\varepsilon}}

\newcommand{\zext}{z_{\varepsilon}(x, \theta)}

\usepackage[preprint]{jmlr2e}
\renewenvironment{proof}[1][]{%
    \par\noindent{\bf Proof}%
    \if\relax\detokenize{#1}\relax\else\ (#1)\fi\space\ignorespaces
}{\hfill\BlackBox\\[2mm]}

\usepackage{lastpage}
\jmlrheading{}{}{}{7/26; Revised /}{/}{21-0000}{Baptiste Plaquevent-Jourdain, Jalal Fadili and Antonio Silveti-Falls}

\ShortHeadings{Differentiating Minimal-Norm Solutions}{Baptiste Plaquevent-Jourdain, Jalal Fadili and Antonio Silveti-Falls}
\firstpageno{1}

\begin{document}

\title{Differentiating Minimal-Norm Solutions to Parametric Optimization Problems}

\author{\name Baptiste Plaquevent-Jourdain \email baptiste.plaquevent-jourdain@USherbrooke.ca \\
       \addr CVN, OPIS\\
       CentraleSupélec\\
       Gif-sur-Yvette, 91190, France\\
       Postdoctoral researcher, corresponding author
       \AND
       \name Jalal Fadili \email jalal.fadili@ensicaen.fr \\
       \addr GREYC CNRS\\
       ENSICAEN\\
       Caen, 14000, France
       \AND
       \name Antonio Silveti-Falls \email tonys.falls@gmail.com \\
       \addr CVN, OPIS\\
       CentraleSupélec\\
       Gif-sur-Yvette, 91190, France}

\editor{My editor}

\maketitle

\begin{abstract}%
Differentiating through parametric optimization problems is central to bilevel programming and meta-learning, often accomplished using approximate implicit differentiation. The implicit function theorem requires inverting a partial Jacobian of the optimality condition, which fails when there are many solutions.
Nonetheless, in such cases it is possible to relax invertibility to a strictly weaker uniform range condition, under which it is shown that the minimal-norm solution mapping admits generalized derivatives by using a limiting Tikhonov regularization argument and conservative set-valued field theory. With additional control on the eigenvalues of the generalized Hessians, a pseudoinverse formula is justified. This is established for a class of smooth convex objectives and extended to nonsmooth composite problems. These assumptions are verified for Least-Squares, Huber regression and LASSO. The resulting extension of nonsmooth implicit differentiation to ill-posed settings is examined experimentally on data poisoning and data hypercleaning problems.
\end{abstract}

\begin{keywords}
  bilevel programming, implicit differentiation, conservative mappings, minimal-norm solutions, Tikhonov regularization
\end{keywords}

\section{Introduction}\label{s_intro}
Parametric optimization is pervasive in modern machine learning, including bilevel programming problems. Examples include hyperparameter tuning \citep{pedregosa_2016,lorraine_vicol_duvenaud_2019,bertrand_klopfenstein_blondel_vaiter_gramfort_salmon_2020}, dictionary learning \citep{PeyreFadili11, mairal_bach_ponce_2011}, meta-learning \citep{franceschi_frasconi_salzo_grazzi_pontil_2018,engstrom2025optimizing}, data transformations (cleaning, poisoning, etc) \citep{wang_zhu_torralba_efros_2018,blondel_berthet_cuturi_frostig_hoyer_llinares-lopez_pedregosa_vert_2021}, model selection in statistics \citep{vaiter2015model,vaiter2017degrees}, and deep learning \citep{winston_kolter_2020}. All of these can be cast into the following template bilevel optimization problem, which we consider in this paper:
\begin{gather}\tag{Q}\label{Q}
        \min_{\theta\in\Theta}\mathcal{L}(x^\star(\theta),\theta) \\
        \tag{P}\label{P}
        \mathrm{s.t.}\quad x^\star(\theta) \in \mS(\theta):=\argmin_{x\in\mathbb R^n} f(x,\theta).
\end{gather}
Here $\theta\in\Theta\subset\mathbb R^m$ is the \textit{upper-level} variable and $x$ is the \textit{lower-level} variable. We work under the standing assumptions:
\begin{enumerate}[label={\bf(Reg)}]
\item \label{assum:f_reg} {[Regularity]} For every $\theta \in \Theta$, $f(\cdot, \theta)$ is $\mC^{1,1}(\mathbb{R}^n)$, convex, and $\mS(\theta) \neq \emptyset$. Furthermore, $F := \nabla_x f$ is locally Lipschitz continuous jointly in $(x, \theta)$. 
\end{enumerate}
\begin{enumerate}[label={\bf(D)}]
\item \label{assum:f_def} {[Definability]} The function $f$ and the set $\Theta$ are \textit{definable}. By definability of $f$, $F := \nabla_x f$ is also definable. Since $F$ is assumed locally Lipschitz, it is path-differentiable. 
\end{enumerate}
Throughout, definability is understood relative to some fixed o-minimal geometry \citep{vandendries_miller_1996,coste1999introduction} and will be used in all of our arguments. We also give a short overview to this theory in Appendix~\ref{s_definability}.

First-order methods for \eqref{Q}-\eqref{P} often require differentiating a selection of the lower-level solution map $\mS$. Under sufficient regularity assumptions and strong convexity of the lower-level objective $f$, the implicit function theorem (IFT) applied to the first-order optimality condition $F(x,\theta) = 0$ gives a Jacobian of the solution map $\theta \mapsto x^\star(\theta)$ in closed-form as $-H^{-1}M$, where $(H\ M) \in J_F(x^\star(\theta), \theta)$ is an element of the generalized Jacobian of $F$ \citep{bolte_le_pauwels_silveti-falls_2021,bolte_pauwels_silveti-falls_2024}.\footnote{We pick the letters $H$ and $M$ to represent the Hessian and Mixed derivative, since $F(x,\theta)=\nabla_xf(x,\theta)$, in parallel with the smooth case.} The nonsmooth IFT given in \citet{bolte_le_pauwels_silveti-falls_2021} justifies this formula with conservative Jacobians \citep{bolte_pauwels_2019} but it still requires invertibility of the associated $H$ matrices.

When the lower-level problem is ill-posed and admits multiple solutions, e.g., when $\mS(\theta)$ is not a singleton, $H$ is no longer guaranteed to be invertible and the formula $-H^{-1}M$ can fail. This is a common occurrence in practice, especially for overparameterized problems \citep{vicol_lorraine_pedregosa_duvenaud_grosse_2022}. Even a simple Least-Squares problem with $f(x,\theta) = \tfrac{1}{2}\|Ax - \theta\|_2^2$ for $A\in\mathbb{R}^{m\times n}$ with $m<n$ has a nontrivial affine subspace of solutions; the same degeneracy arises in Huber regression with a fat design matrix and in the LASSO \citep{tibshirani_1996, tibshirani_2011} with redundant features.

In practice, invertibility is treated as an afterthought despite some evidence showing that its absence could lead to serious training instabilities \cite[Section 5]{bolte_le_pauwels_silveti-falls_2021}. A solution seen in practice is to apply the pseudoinverse $H^\dag$ if $H$ is singular \citep{vicol_lorraine_pedregosa_duvenaud_grosse_2022}. Our goal is to rigorously study aspects of these practical approaches by connecting them to the \textit{minimal-norm solution}
\begin{equation*}
\minsol(\theta) := \argmin\{\|x\|_2:x\in \mS(\theta)\},
\end{equation*}
which is selected by \textit{Tikhonov regularization} \citep{tikhonov_1963}. Namely, for $\varepsilon>0$, define the $\eps$-regularized problem
\begin{equation}
\tag{$\mbox{P}_\eps$}\label{Pvarepsilon}
x^\star_\varepsilon(\theta):=\argmin_{x\in\mathbb R^n}\left\{f(x,\theta)+\tfrac{\varepsilon}{2}\|x\|_2^2\right\}.
\end{equation}

Classical results \citep{browder_1966,browder_1967} guarantee that $x_\varepsilon^\star(\theta)\to \minsol(\theta)$ as $\eps\to0^+$. Since $f(\cdot,\theta)$ is convex for any $\theta$, then for each fixed $\varepsilon>0$, the regularized objective in \eqref{Pvarepsilon} is strongly convex with (generalized) gradient $F_{\varepsilon}$. Hence nonsmooth implicit differentiation \citep{bolte_pauwels_silveti-falls_2024} gives a conservative Jacobian of $x^\star_\varepsilon$ of the form
\begin{equation*}
J_{x_\varepsilon^\star}(\theta) = \left\{-(H_{\varepsilon}+\varepsilon I)^{-1}M_{\varepsilon}\colon(H_{\varepsilon}\ M_{\varepsilon})\in J_{F}(x^\star_\varepsilon(\theta),\theta)\right\}.
\end{equation*}
The central question is therefore
\begin{center}
{\textit{Do these regularized conservative Jacobians have a meaningful limit as $\varepsilon\to0^+$, and how does this limit relate to $\minsol$ (cf. Figure~\ref{fig:ad}) ?
}}
\end{center}

\begin{wrapfigure}{r}{0.46\textwidth}
\centering
\vspace{-0.5cm}
\begin{tikzpicture}
    \draw (0,0) node(f) {$x^\star_{\eps}(\theta)$};
    \draw (3.1,0) node(df) {$x^{\star}_{\min}(\theta)$};
    \draw (0,-2.25) node(fun) {$J_{x^\star_\eps}(\theta)$};
    \draw (3.1,-2.25) node(dfun) {$\jmin(\theta)$};

    \draw[->] (f) -- (df)
        node[above,midway]{\footnotesize \citet{browder_1966}}
        node[below,midway]{\footnotesize $\eps\to0^+$};
    \draw[->] (f) -- (fun)
        node[rotate=90,above,midway]{\footnotesize implicit}
        node[rotate=90,below,midway]{\footnotesize diff.};
    \draw[red,thick,densely dashed,->] (fun) -- (dfun)
        node[below,midway]{\footnotesize\color{red} limit?};
    \draw[red,thick,densely dashed,->] (df) -- (dfun)
        node[rotate=90,below,midway]
        {\footnotesize\color{red} derivative?};
\end{tikzpicture}
\caption{Solid arrows are known; dashed arrows are studied in this
work. Standard IFT applies to the Tikhonov regularized solution
$x^\star_\eps$ but fails in general if $H$ is singular. Under suitable
assumptions, we show that the cluster points of the conservative
Jacobians along the bottom path are conservative for the minimal-norm
solution, justifying in addition the formula
$-H^\dagger M\in\jmin(\theta)$ for $\minsol$ along the right path when
the blocks $\Heps$ have controlled eigenvalues.}\vspace{-0.75cm}
\label{fig:ad}
\end{wrapfigure}
We answer this question affirmatively using recent results in the theory of path-differentiable functions \citep{schechtman2026gradient}, which give a precise characterization of limits of gradients and conservative mappings for definable mappings. Our results show that invertibility can be relaxed to a uniform \textit{range condition}, $\Meps\in\range(\Heps)$ around the solution, together with a uniform condition on the eigenvalues of the $\Heps$. Under these assumptions, the Tikhonov implicit derivatives converge to $-H_0^{\dagger} M_0$ as $\eps\to0^+$ for some pairs $(H_0, M_0) \in J_F(\minsol(\theta), \theta)$, and this limit defines an element of a conservative mapping of the minimal-norm solution $\minsol(\theta)$. In turn, this element can be used in first-order methods for the upper level. This extends implicit differentiation using conservative calculus from well-posed lower-level problems to ill-posed convex problems with many solutions.

The range condition is a strict relaxation of the usual invertibility condition required since, if the matrices $H_0$ are invertible, so are the matrices $\Heps$ for $\varepsilon$ small enough thus $M_{\varepsilon}\in\range(H_{\varepsilon})$ (by continuity of the determinant). The eigenvalues condition is not necessary for path-differentiability but only for the pseudoinverse formula, which must be corrected without this condition. In general, the matrices $\Heps$ shall be symmetric, which simplifies the statement of the eigenvalue condition: for functions that are twice differentiable (even without continuity of their Hessians), the Hessian is symmetric~\citep{dieudonne1969foundations}; for nonsmooth functions, every matrix in the Clarke Jacobian of a proximal operator is also symmetric and positive semidefinite, see \citep[\thmref{thm_CF_prox}]{patrinos_stella_bemporad_2014}.
We exhibit a problem (optimization over the $\ell_\infty$-ball, see Figure~\ref{fig:linf}) where the minimal-norm solution is discontinuous, so no conservative mapping can exist, illustrating the limits of the approach and consistent with known hardness results for bilevel optimization \citep{bolte2026geometric}.
\subsection{Main Contributions}
\begin{itemize}%
    \item We show path-differentiability of the minimal-norm solution to a parametric convex optimization problem: under definability and local boundedness of the regularized implicit Jacobians, the limits of sequences of regularized conservative Jacobians are conservative for the minimal-norm solution.
    \item Under a uniform range condition on the second-order derivatives, we justify local boun\-ded\-ness and thus path-differentiability of the minimal-norm solution. This condition is verified on Least-Squares, Huber regression, and LASSO.
    \item If in addition an eigenvalue condition holds, we justify an explicit formula for the implicit Jacobians in terms of the pseudoinverse of the Hessian. In general, a residual term appears resulting from vanishing curvature alongside the trajectory.
    \item We experimentally verify our work with problems involving ill-posed lower-level objectives. The benefits of differentiating the minimal-norm solution are showcased on data poisoning experiments with Least-Squares and Huber loss functions, which display the relevance of the minimal-norm solution compared to guaranteed nonminimal solutions. We also present a data hypercleaning experiment with LASSO lower-level that exposes a tradeoff between regularization and precision.
\end{itemize}
\subsection{Related Work}
Differentiating solutions is intrinsically related to sensitivity analysis \citep{bonnans_shapiro_2000}. Nonsmooth versions of the IFT first appear in \citet{clarke_1990, robinson_1991, dontchev_rockafellar_2009}. In \citet{adly_rockafellar_2021}, one-dimensional parametric variational inclusions are con\-si\-dered via graphical convergence, semi- and proto-differentiability. The role of (Tikhonov) regularization is also discussed in \cite{attouch_1996}. In \citet{arbel_mairal_2022} the authors introduce a parametric Morse-Bott property \citep[see also][]{bolte_le_pauwels_vaiter_2026} which implies that $f$ behaves quadratically around critical points. This is a more demanding property as it implies the range condition \citep[Proposition~6]{arbel_mairal_2022} though it captures the more general case where the lower-level problem is nonconvex, in which case Tikhonov regularization and Browder convergence cannot be applied. Recently, in \citet{masiha_shen_kiyavash_he_2026}, the authors consider a similar setting under a Polyak-\L ojasiewicz assumption (uniform in $\theta$) alongside third-order smoothness and nondegeneracy. This allows them to diffe\-rentiate an optimistic selection mapping and to obtain formulas also employing a kind of pseudoinverse. Another approach to ensure finding the minimal-norm solution is to use vanishing Tikhonov regularization $\varepsilon_k$ with a first-order solver. This technique was studied for unrolled differentiation in \citet{mehmood2024automatic}. Closest to our work is \citet{vicol_lorraine_pedregosa_duvenaud_grosse_2022}, which ensures convergence in a setting with quadratic lower and upper objectives. Despite this limitation, they validate the behavior of several algorithms on multiple problems. A related contribution is \citet{pauwels2023derivatives}, which showed the convergence of derivatives of the Sinkhorn algorithm in optimal transport without contractivity by using the spectral inverse (a refined inverse identical to pseudoinverse for symmetric matrices). In \citet{bambade_schramm_taylor_carpentier_2024}, the authors differentiate the solutions of a quadratic problem with parametric cost and constraints using Lagrangian-based techniques and minimal-norm solutions. We also mention that there are many works that use value functions or adjoint methods to transform the bilevel problem into a single-loop problem, for example \cite{suonpera2026single}.%
\begin{wrapfigure}{r}{0.46\textwidth}
\centering
\vspace{-.5cm}
\includegraphics[width=0.92\linewidth]{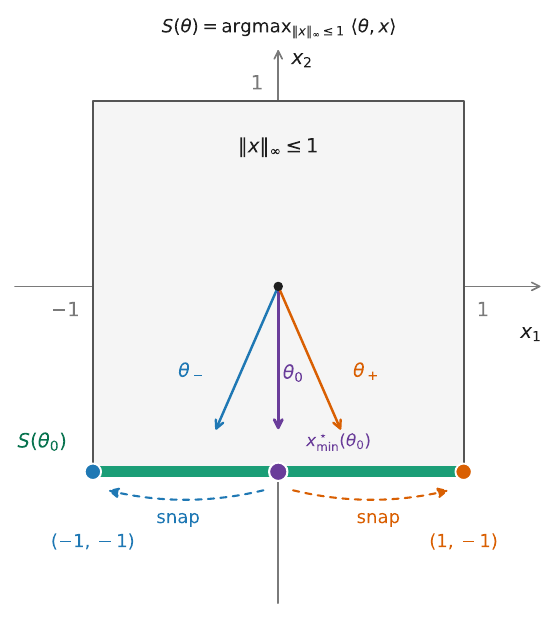}
\caption{A simple composite problem,
$\max_{\|x\|_{\infty}\leq1}\langle\theta,x\rangle$. The minimal-norm
solution is $\minsol(\theta)=\sign(\theta)$, with $\sign(0)=0$, which
is not continuous at the represented $\theta_0=(0,-1)$ and hence is
not path-differentiable; the range conditions must therefore fail
(see \remref{rem_failure_Rcomp_Linf}). Here
$\mS(\theta_0)$, shown in green, is an edge of the ball and
$\minsol(\theta_0)=(0,-1)$. Perturbing $\theta_0$ toward
$\theta_-$ or $\theta_+$ can collapse the solution set to a singleton
and make the minimal-norm solution jump to a corner.}\vspace{-.75cm}
\label{fig:linf}
\end{wrapfigure}
The paper is organized as follows. In \secref{s_nonsmooth_illposed}, we present the main theoretical tools we employ, conservative Jacobians and nonsmooth implicit function theorem, and how to apply them to ill-posed settings via Tikhonov regularization, see also \secref{s_definability}. Then, \secref{s_C11} presents our framework on smooth convex objectives, some of the proofs being in \secref{s_C11_proofs}. Under stronger assumptions, we present extensions for nonsmooth composite problems in \secref{s_involved_pbs}, completed in \secref{s_composite}. \secref{ss_convergence_rates} briefly discusses how regularization and accuracy counterbalance each other (see also \secref{s_detailed_rates}). Finally, \secref{s_experiments} presents experimental results, with additional details for reproducibility given in \secref{s_additional_experiments}.

\paragraph{Main Notation}
For a matrix $A \in \mbR^{m \times n}$, the transpose of $A$ is denoted by $A\tp \in \mbR^{n \times m}$, its null space by $\ker(A)$, its range space by $\range(A)$ and its pseudoinverse by $A^{\dagger} \in \mbR^{n \times m}$ (see \defref{def_pseudoinverse}). $A \in \mbR^{n \times n}$ is unitary if $A\tp A = I_n = AA\tp$ with $I_n$ (or $I$) the identity matrix of size $n \times n$. A matrix $A$ that is positive semidefinite (PSD) will be denoted by $A\succeq 0$. The spectrum of a square matrix is denoted $\mathrm{spec}$. For $q \in \mbR^n$, $\diag(q) \in \mbR^{n \times n}$ denotes the diagonal matrix whose entries are equal to $q_i$. For $A$ and $B$ with suitable dimensions, $(A\ B)$ is the horizontal concatenation of $A$ and $B$,$(A;B)$ the vertical one. Set-valued mappings are denoted by $\setvalued$. A function $F : \mbR^n \rightarrow \mbR^m$ is \textit{locally Lipschitz} if, for each $x \in \mbR^n$, there exists a neighborhood $\mU$ of $x$ such that $F$ is Lipschitz on $\mU$. The set of points at which $F$ is differentiable is denoted by $\mathrm{Diff}(F)$; at these points, $\nabla F(x) \in \mbR^{n \times m}$ is the gradient of $F$ and $\mathrm{Jac}F(x) = \nabla F(x)\tp$ its Jacobian. The class of extended real-valued convex, lower semicontinuous, and proper functions on $\mathbb{R}^n$ is denoted $\Gamma_0(\mbR^n)$. For $f \in \Gamma_0(\mbR^n)$, $\prox_f(x) := \argmin_y \{f(y) + \|y-x\|^2_2/2\}$ is the proximal operator of $f$ \citep{moreau_1965, bauschke_combettes_2017}. We refer to \secref{s_classic_results} for additional information.

\section{Implicit Differentiation and Conservative Mappings}\label{s_nonsmooth_illposed}
This section details the main analytical tools used in the paper, which are the conservative implicit function theorem \citep{bolte_le_pauwels_silveti-falls_2021} and a recent result from \citet{schechtman2026gradient} about limits of path-differentiable functions. We combine these in \thmref{thm_general} to give a sufficient condition that ensures the path-differentiability of the minimal-norm solution.
\subsection{Background}\label{ss_PathDiff_ConsJacs}
The notions of \textit{conservative Jacobian} (used interchangeably with \textit{conservative mapping}) and \textit{path-differentiable function} were introduced in the seminal paper of \citet{bolte_pauwels_2019} as a way to faithfully model automatic differentiation.
\begin{definition}
    [Conservative mappings {\citep[section 2]{bolte_pauwels_vaiter_2022}}] Let $F : \mbR^n \rightarrow \mbR^m$ be locally Lipschitz, then $J_F : \mbR^n \setvalued \mbR^{m \times n}$ is a \textit{conservative Jacobian} for the \textit{path-differentiable} function $F$ if $J_F$ is never empty, is locally bounded, has a closed graph, and satisfies, for any absolutely continuous curve $\mu : [0, 1] \rightarrow \mbR^n$ and almost all $t\in[0,1]$,
    \[\frac{\mathrm{d}}{\mathrm{d}t} F(\mu(t)) = V \dot{\mu}(t)\qquad \forall\ V \in J_F(\mu(t)).\]
\end{definition}
A conservative mapping appears as a generalized derivative of a locally Lipschitz function along absolutely continuous curves and generalizes the Clarke Jacobian \citep{clarke_1990}
\[
\partial F(x) = \conv{\left\{ \lim_{y \rightarrow x}\mathrm{Jac} F(y) : y \in \mathrm{Diff}(F) \right\}},
\]
where $\mathrm{Diff}$ is the differentiable domain of $F$, the set of points at which it is differentiable. 

Contrary to the Clarke Jacobian \citep[see][Example~1]{bolte_le_pauwels_silveti-falls_2021}, conservative mappings enjoy exact calculus rules such as sum and composition, which notably justifies automatic differentiation: its outputs are elements of a certain conservative Jacobian of the solution mapping \citep[see][section 5]{bolte_pauwels_2019}. For convex conservative mappings, the Clarke Jacobian is a minimal conservative mapping \citep[Corollary~1]{bolte_pauwels_2019}.

Almost everywhere, $J_F(x) = \{\mathrm{Jac} F(x)\} = \partial F(x)$ by Rademacher's theorem since $F$ is locally Lipschitz \citep{rademacher_1919}. If $J_F$ takes convex values, $\partial F(x) \subset J_F(x)$; conservative mappings are preserved by pointwise convexification but need not be convex in general.

Functions that have a conservative Jacobian are said to be \textit{path-differentiable}. Important examples include locally Lipschitz convex or concave functions and, crucially for this paper, locally Lipschitz definable mappings on open definable domains \citep{davis_drusvyatskiy_kakade_lee_2018, bolte_pauwels_2019}. Here definability is relative to some o-minimal structure fixed throughout the paper; see \secref{s_definability}. Semialgebraic problems are therefore covered as a special case.
The main property of conservative mappings employed in this paper is a nonsmooth IFT, which requires the existence of a locally Lipschitz implicit function. Another version \citep[Corollary~1]{bolte_le_pauwels_silveti-falls_2021} proves this existence but requires the conservative mapping to be convex.
\begin{theorem}
    [Path-differentiable IFT \citep{bolte_le_pauwels_silveti-falls_2021}]\label{thm_IFT_other} Let $F : \mbR^n \times \mbR^m \rightarrow \mbR^n$ be path-differentiable on an open set $\mU \times \mV \subset \mbR^{n} \times \mbR^m$ and $G : \mV \rightarrow \mU$ a locally Lipschitz function such that, $\forall\ y \in \mV$,
    \[F(G(y), y) = 0.\]
    Furthermore, assume that $\forall\ y \in \mV$, $\forall\ (H\ M) \in J_F(G(y), y)$ the matrix $H$ is invertible where $J_F$ is a conservative Jacobian for $F$. Then, $G$ is path-differentiable with conservative Jacobian given, $\forall\ y \in \mV$, by
    \[J_G : y \rightrightarrows \{-H^{-1}M : (H\ M) \in J_F(G(y), y)\}.\]
\end{theorem}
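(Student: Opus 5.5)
The plan is to verify directly the conservative-Jacobian definition for the candidate $J_G(y) := \{-H^{-1}M : (H\ M) \in J_F(G(y), y)\}$. This means checking that it is nonempty, locally bounded and has closed graph, and then establishing the chain-rule identity along absolutely continuous curves.

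Nonemptiness is immediate because $J_F$ is nonempty. For local boundedness and closed graph, I would fix $y_0 \in \mV$. Since $J_F$ has closed graph and is locally bounded, and $G$ is continuous, the set $\{(H\ M)\in J_F(G(y),y): y \in \bar B(y_0,r)\}$ is compact for small $r$. Every $H$ in it is invertible: each such element lies in $J_F(G(y),y)$ for some $y$ in the ball, so the hypothesis applies. Hence $\|H^{-1}\|$ is uniformly bounded on this compact set by continuity of inversion on invertible matrices. Closedness of the graph of $J_G$ follows by extracting convergent subsequences $(H_k\ M_k)\to(H\ M)\in J_F(G(y),y)$ and using continuity of $(H,M)\mapsto -H^{-1}M$.

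For the chain rule, let $\gamma:[0,1]\to\mV$ be absolutely continuous. Then $\mu(t):=(G(\gamma(t)),\gamma(t))$ is absolutely continuous, since $G$ is locally Lipschitz and $\gamma([0,1])$ is compact. The composite $t\mapsto F(\mu(t))$ is identically zero. By conservativity of $J_F$, for a.e.\ $t$ and every $(H\ M)\in J_F(\mu(t))$ we get
\[
0 = H\,\tfrac{\mathrm d}{\mathrm dt}G(\gamma(t)) + M\dot\gamma(t).
\]
At those $t$ where additionally $G\circ\gamma$ is differentiable (a.e., by absolute continuity), invertibility of $H$ gives $\tfrac{\mathrm d}{\mathrm dt}G(\gamma(t)) = -H^{-1}M\dot\gamma(t)$ for every element of $J_G(\gamma(t))$. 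This is exactly the required identity.

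The main obstacle is bookkeeping the "almost every $t$" sets. The null set from conservativity of $J_F$ must be taken for the specific curve $\mu$, not for $\gamma$, and it must be intersected with the differentiability set of $G\circ\gamma$. A second subtlety is that the definition is stated for curves on $[0,1]$ in the whole space, while here everything lives on the open set $\mU\times\mV$. I would handle this by noting that the identity is local in $t$: cover $[0,1]$ by finitely many intervals on which $\gamma$ stays in balls where the above compactness holds.
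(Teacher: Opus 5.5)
Your argument is correct and is essentially the standard direct proof of this result in \citet{bolte_le_pauwels_silveti-falls_2021}; the paper only quotes the theorem and does not reprove it. In that argument, local boundedness and closed graph of $J_G$ come from compactness of $J_F$ over $\{(G(y),y): y\in \overline{B}(y_0,r)\}$ and continuity of $H\mapsto H^{-1}$, and the chain rule comes from differentiating $F(G(\gamma(t)),\gamma(t))\equiv 0$ along the absolutely continuous lifted curve and inverting $H$. Your final covering step is unnecessary: $\gamma$ takes values in $\mV$, so the lifted curve already lies in $\mU\times\mV$, where conservativity of $J_F$ applies directly.
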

For strongly convex problems, it is possible to proceed as in \citet{bolte_pauwels_silveti-falls_2024}. Let $f : \mbR^n \times \mbR^m \rightarrow \mbR$ such that, for each $\theta \in \Theta \subset \mbR^m$, $f(\cdot, \theta)$ is definable, $\mC^{1,1}$ and strongly convex. Consider
\begin{equation}\label{pb_parameterized_simple}
    \theta \mapsto \xs(\theta) := \underset{x \in \mbR^n}{\mathrm{argmin}}\ f(x, \theta).
\end{equation}
For fixed $\theta$, this strongly convex problem has a unique solution $\xs(\theta)$ verifying the optimality condition $F(\xs(\theta), \theta) = 0$ where $F = \nabla_x f : \mbR^n \times \mbR^m \rightarrow \mbR^n$. Then, the $H$-blocks of elements $(H\ M) \in J_F(\xs(\theta), \theta)$ are invertible because they are generalized Hessians of a strongly convex $\mC^{1,1}$ function. A way to proceed \citep[the strategy of][]{bolte_pauwels_silveti-falls_2024} is to convexify the conservative Jacobian $J_F$, apply Corollary~1 of \citet{bolte_le_pauwels_silveti-falls_2021}, which gives existence of the implicit mapping, and then to apply \thmref{thm_IFT_other} using the given implicit mapping with the nonconvex conservative mapping.

In this paper, we concern ourselves with ill-posed problems that are not strongly convex (the matrices $H$ can be singular) such as Least-Squares \citep{fahrmeir_kneib_lang_marx_2013} or LASSO \citep{tibshirani_1996, osborne_presnell_turlach_2000, tibshirani_2013}. The approach we follow is to use Tikhonov regularization as in \eqref{Pvarepsilon} to circumvent ill-posedness. While the convergence of the solutions $\xsepst \rightarrow \minsol(\theta)$ is well-known \citep{tikhonov_1963,browder_1966, browder_1967}, the convergence of generalized derivatives studied in this paper is far more intricate and requires much more care (see Figure~\ref{fig:ad}).

\subsection{Limits of Conservative Jacobians}\label{ss_Schechtman_limits}
To pass from the regularized solutions to $\minsol$, we use a limiting result for definable conservative Jacobians from \citet{schechtman2026gradient}. We define $\xs$ on $\Theta\times \mathbb{R}_+$ by
\begin{equation*}
    \xs\colon(\theta,\varepsilon)\mapsto \begin{cases}
        \xseps(\theta) & \text{if }\eps>0,\\
        \minsol(\theta) & \text{if }\eps=0,
    \end{cases}
\end{equation*}
with $\minsol$ the minimal-norm solution mapping of \eqref{pb_parameterized_simple}. Given an open set $O\subset\Theta$, for every $\varepsilon>0$ let $J_{\xseps}:O\setvalued\mbR^{n\times m}$ be a chosen conservative Jacobian of $\theta\mapsto\xseps(\theta)$ on $O$. We collect these mappings in the family
\[
    J_{\xs}(\theta,\varepsilon):=J_{\xseps}(\theta),
    \qquad (\theta,\varepsilon)\in O\times\mbR_+^*.
\]
We emphasize that $\varepsilon$ is only an index and $J_{\xs}(\theta,\varepsilon)$ contains derivatives with respect to $\theta$ only, not with respect to $\varepsilon$ (which is how they are used in \citet{schechtman2026gradient}). For $\theta\in O$, define
\begin{equation}\label{eq:J_min}
J_{\minsol}(\theta)
:=
\left\{
V\in\mbR^{n\times m}:
\begin{array}{l}
\exists\ \theta_k\in O,\ \theta_k\to\theta,\ \varepsilon_k\downarrow0,\ V_k\to V
\text{ with }\\
V_k\in J_{\xs}(\theta_k,\varepsilon_k),\text{and }\xs(\theta_k,\varepsilon_k)\to\minsol(\theta)
\end{array}
\right\}.
\end{equation}
\begin{theorem}
    [Limit properties {\cite[Lemma~4.2, Theorem~4.3 and Corollary~4.5]{schechtman2026gradient}}]\label{thm_Schechtman} Fix $\bar\eps>0$. Suppose that (i) the restrictions of $\xs$ and $J_{\xs}$ to $O\times[0,\bar\eps]$ and $O\times]0,\bar\eps]$, respectively, are definable, and (ii) for every $0<\varepsilon\leq\bar\eps$, $J_{\xseps}$ is a conservative mapping of $\xseps$ on $O$. Suppose in addition that, for every compact set $K\subset O$,
    \begin{align*}
        (iii)\quad &\sup_{\theta\in K}
        \|\xs(\theta,\varepsilon)-\minsol(\theta)\|
        \underset{\etz}{\longrightarrow}0,\\
        (iv)\quad &\limsup_{\varepsilon_0\downarrow0}
        \sup\{\|V\|:V\in J_{\xseps}(\theta),\ 0<\varepsilon\leq\varepsilon_0,\ \theta\in K\}
        <\infty.
    \end{align*}
    Then $\minsol$ and $J_{\minsol}$ are definable on $O$, and $J_{\minsol}$ is a conservative mapping of $\minsol$ on $O$.
\end{theorem}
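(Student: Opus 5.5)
Since the statement is imported from \citet{schechtman2026gradient}, I would not rebuild that theory. I would instead check that our setting fits its hypotheses and then prove the conservativity property directly through a limiting argument along curves.

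\textbf{Step 1: definability.} Hypothesis (i) states that $(\theta,\eps)\mapsto\xs(\theta,\eps)$ on $O\times[0,\bar\eps]$ and the graph of $J_{\xs}$ on $O\times]0,\bar\eps]$ are definable. The set in \eqref{eq:J_min} is obtained from these by a closure-type operation: take the graph of $J_{\xs}$ intersected with the set of points $(\theta',\eps,V)$ where $\xs(\theta',\eps)$ is close to $\minsol(\theta)$, and then take the closure as $\eps\downarrow0$. This operation can be written as a first-order formula with quantifiers over $\delta>0$. Hence $J_{\minsol}$ is definable. $\minsol=\xs(\cdot,0)$ is definable as a restriction.

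\textbf{Step 2: basic properties.} Uniform convergence (iii) on compacts, together with (iv), gives the following. The maps $\xseps$ are locally Lipschitz with a constant uniform in $\eps$, because conservative Jacobians that are bounded by $L$ on a convex neighborhood give the Lipschitz bound $L$ via the curve identity along segments. Passing to the limit, $\minsol$ is locally Lipschitz with the same constant. $J_{\minsol}$ is locally bounded by (iv). It has closed graph by a diagonal extraction from its sequential definition; the condition $\xs(\theta_k,\eps_k)\to\minsol(\theta)$ is automatic by (iii) and continuity of $\minsol$. It is nonempty by compactness: take $\theta_k=\theta$ and $\eps_k\downarrow0$, pick $V_k\in J_{\xs}(\theta,\eps_k)$, and extract a convergent subsequence using (iv).

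\textbf{Step 3: the chain rule along curves, which is the main obstacle.} Let $\mu$ be an absolutely continuous curve in $O$. For each $\eps$, the curve identity $\frac{d}{dt}\xseps(\mu(t))=V\dot\mu(t)$ holds for all $V\in J_{\xseps}(\mu(t))$, for a.e.\ $t$. Passing to the limit in integrated form only gives
\[
\minsol(\mu(t))-\minsol(\mu(s))=\lim\int_s^t V_\eps(\tau)\dot\mu(\tau)\,d\tau,
\]
which is a weak-limit statement. It does not give the required conclusion that \emph{every} $V\in J_{\minsol}(\mu(t))$ satisfies the identity. This is exactly where definability is essential, and where I would invoke \citet[Theorem~4.3]{schechtman2026gradient}.

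The plan for that step is as follows. Consider the definable set-valued map $(\theta,\eps)\setvalued J_{\xs}(\theta,\eps)\times\{\partial_\eps\text{-part}\}$ on $O\times]0,\bar\eps]$. Use a definable variational stratification of the family $\xs(\cdot,\eps)$: by projection formula arguments, for a stratification of $O\times[0,\bar\eps]$ compatible with the graph of $J_{\xs}$, each $V$ acts as the tangential derivative of $\xs$ restricted to the stratum. The limits at $\eps=0$ then land on strata of the boundary $O\times\{0\}$. The Whitney/Verdier-type regularity of definable stratifications ensures that limits of tangent-restricted derivatives restrict correctly to tangent spaces of the limiting strata. Thus every $V\in J_{\minsol}(\theta)$ satisfies $V w=d\minsol|_S(\theta)w$ for $w$ tangent to the stratum $S\ni\theta$ of a stratification of $O$. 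Since an absolutely continuous curve has $\dot\mu(t)$ tangent to the stratum containing $\mu(t)$ for a.e.\ $t$, the characterization of conservative maps by projection formulas in \citet{bolte_pauwels_2019} yields the conclusion. The uniform bound (iv) is what prevents blow-up of the normal components in this limiting process.
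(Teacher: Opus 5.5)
Your proposal is correct and matches the paper, which gives no argument of its own for this statement and simply imports \citet[Lemma~4.2, Theorem~4.3 and Corollary~4.5]{schechtman2026gradient}. Your Steps~1--2 (definability through a first-order $\delta$-formula, uniform Lipschitz bounds, local boundedness, closed graph and nonemptiness of $J_{\minsol}$) are valid direct checks, and Step~3 rests on that same citation. Do not, however, read your stratification sketch as a substitute for the citation: $\xs$ need not be Lipschitz in $\varepsilon$ (in Example~\ref{ex_vanishing_eigenvalues}, $\partial_\varepsilon\xseps(\theta)$ blows up like $\varepsilon^{-2/3}$ for $\theta\neq0$), so there is no bounded $\partial_\varepsilon$-part, and the limit has to be taken on the $\varepsilon$-slices of strata of $\operatorname{gph}\xs$ via a Thom $(a_\pi)$-type condition for $\pi(\theta,\varepsilon,x)=\varepsilon$, with (iv) being what keeps the limits of the slice tangent spaces $\{(w,0,V_kw)\}$ nonvertical.
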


In \thmref{thm_Schechtman}, $(iii)$ is a form of continuity, while $(iv)$ relates to local boundedness of the derivatives which must hold for conservative mappings. As we will show later in \thmref{thm_loc_bound_deriv_imply_cont}, the setting we consider guarantees that $(iv) \implies (iii)$, so that we may focus solely on the boundedness of the sequence of elements coming from the conservative mappings.

\section{Problems with \texorpdfstring{$\mC^{1,1}(\mbR^n)$}{Lipschitz-smooth} Objectives} \label{s_C11}
We study in this section the conservative Jacobians of minimal-norm solutions to problems with $\mC^{1,1}$ lower-level objectives. We first introduce the general setting and illustrate it concretely on Least-Squares; this example motivates the uniform range condition, after which we study the pseudoinverse formula and apply the results to Huber regression.

We consider problem \eqref{P} under Assumptions \ref{assum:f_reg} and \ref{assum:f_def} on an open definable set $O\subset\Theta$. Let $F:=\nabla_x f$, since $f(\cdot,\theta)$ is convex, its solutions are characterized by $F(x,\theta)=0$. For $\varepsilon>0$, define
\begin{equation*}
    F_{\varepsilon}(x,\theta)
    :=F(x,\theta)+\varepsilon x
    =\nabla_x f(x,\theta)+\varepsilon x.
\end{equation*}
The regularized objective in \eqref{Pvarepsilon} is strongly convex and therefore has a unique solution $\xsepst$, characterized by
\begin{equation*}
    F_{\varepsilon}(\xsepst,\theta)=0.
\end{equation*}
Let $J_F$ be a definable conservative Jacobian of $F$. At the regularized solution $\xseps(\theta)$, write
\begin{equation*}
    (H_{\varepsilon}\ M_{\varepsilon})
    \in J_F(\xsepst,\theta).
\end{equation*}
Although $F$ has no explicit dependence on $\varepsilon$, the blocks are evaluated at $\xsepst$, and both may therefore vary along the regularized trajectory. We thus take the following conservative Jacobian for $F_{\varepsilon}$, evaluated along the regularized solution:
\begin{equation*}
    J_{F_{\varepsilon}}(\xsepst,\theta)
    :=\left\{\left(H_{\varepsilon}+\varepsilon I_n\ M_{\varepsilon}\right)
    \colon (H_{\varepsilon}\ M_{\varepsilon})
    \in J_F(\xsepst,\theta)\right\}.
\end{equation*}
When $J_F=\partial F$, the convexity of $f(\cdot,\theta)$ guarantees that every $H_{\varepsilon}$ is symmetric PSD, since these $x$-blocks are generalized Hessians of $f$. If we use a more general conservative Jacobian, we will require its $x$-blocks to be symmetric PSD. In either case, $H_{\varepsilon}+\varepsilon I_n$ is symmetric positive definite. \thmref{thm_IFT_other} then gives the conservative Jacobian
\begin{equation}\label{eq:regularized_implicit_jacobian}
    J_{\xseps}(\theta)
    =\left\{-\left(H_{\varepsilon}+\varepsilon I_n\right)^{-1}M_{\varepsilon}
    \colon (H_{\varepsilon}\ M_{\varepsilon})
    \in J_F(\xsepst,\theta)\right\}.
\end{equation}
For the remainder of this section, $J_{\xseps}$ denotes this implicit conservative Jacobian and $J_{\xs}(\theta,\varepsilon):=J_{\xseps}(\theta)$ denotes the associated family. To apply \thmref{thm_Schechtman} and conclude about $\minsol$, we do not need every selection in \eqref{eq:regularized_implicit_jacobian} to converge. What we need is a local uniform bound: for every compact set $K\subset O$, there must exist $C_K>0$ and $\varepsilon_K>0$ such that
\begin{equation}\label{eq:regularized_jacobian_bound_target}
    \sup\left\{
    \left\|\left(H_{\varepsilon}+\varepsilon I_n\right)^{-1}M_{\varepsilon}\right\|
    \colon
    \begin{array}{l}
        \theta\in K,\quad 0<\varepsilon\leq\varepsilon_K,\\
        (H_{\varepsilon}\ M_{\varepsilon})\in J_F(\xsepst,\theta)
    \end{array}
    \right\}
    \leq C_K.
\end{equation}
\subsection{A Motivating Example}
We start with an example that shows how a bound like \eqref{eq:regularized_jacobian_bound_target} can hold even when the limits are singular.
Here $\Theta=O=\mbR^m$. We will obtain a bound for every $\varepsilon>0$, so both $\varepsilon_K$ in \eqref{eq:regularized_jacobian_bound_target} and the global upper bound $\bar\varepsilon$ used below may be chosen arbitrarily and independently of $K$.
Let $A\in\mbR^{m\times n}$ with $m<n$ and consider the Least-Squares problem
\[\label{eq:least-squares}\tag{LS}
    \min\limits_{x\in\mbR^n}\frac{1}{2}\|Ax-\theta\|_2^2.
\]
For this problem, $F(x,\theta)=A\tp(Ax-\theta)$, and we take $J_F=\mathrm{Jac} F$. Along the regularized trajectory, the blocks introduced above are
\begin{equation*}
    H_{\varepsilon}\equiv A\tp A,
    \qquad
    M_{\varepsilon}\equiv-A\tp.
\end{equation*}
The set of solutions to \eqref{eq:least-squares} is $A^\dagger\theta+\ker(A)$, so its minimal-norm solution is $\minsol(\theta)=A^\dagger\theta$.
The Jacobian of $\minsol$ is $A^\dagger$. We now recover the same formula from the Tikhonov-regularized problem. Its solution and Jacobian are
\begin{equation*}
    \xsepst
    =\left(A\tp A+\varepsilon I_n\right)^{-1}A\tp\theta,
    \qquad
    \mathrm{Jac}\,\xseps(\theta)
    =\left(A\tp A+\varepsilon I_n\right)^{-1}A\tp.
\end{equation*}
Set $W_{\varepsilon}\equiv-A^\dagger$. The identity $A\tp A A^\dagger=A\tp$ (see \lemref{lem_pseudoinverse_properties}) gives $M_{\varepsilon}=H_{\varepsilon}W_{\varepsilon}$.
Consequently,
\begin{equation*}
    \mathrm{Jac}\,\xseps(\theta)
    =-\left(H_{\varepsilon}+\varepsilon I_n\right)^{-1}M_{\varepsilon}
    =-\left(H_{\varepsilon}+\varepsilon I_n\right)^{-1}
    H_{\varepsilon}W_{\varepsilon}.
\end{equation*}
Since $H_{\varepsilon}=A\tp A\succeq0$, the eigenvalues of $(H_{\varepsilon}+\varepsilon I_n)^{-1}H_{\varepsilon}$ are of the form $\lambda/(\lambda+\varepsilon)$ with $\lambda\geq0$. Hence $0\preceq\left(H_{\varepsilon}+\varepsilon I_n\right)^{-1}H_{\varepsilon}\preceq I_n$, and therefore $\|\mathrm{Jac}\,\xseps(\theta)\|\leq\|W_{\varepsilon}\|=\|A^\dagger\|$, uniformly in $\varepsilon>0$. Since $H_{\varepsilon}\equiv A\tp A$ is fixed along the trajectory,
\begin{equation*}
    \left(H_{\varepsilon}+\varepsilon I_n\right)^{-1}H_{\varepsilon}
    \underset{\etz}{\longrightarrow}
    (A\tp A)^\dagger A\tp A.
\end{equation*}
It follows that
\begin{equation*}
    \mathrm{Jac}\,\xseps(\theta)
    \underset{\etz}{\longrightarrow}
    -(A\tp A)^\dagger(-A\tp)
    =A^\dagger.
\end{equation*}
Thus the singularity of $H_{\varepsilon}=A\tp A$ is not an obstruction per se. The inverse $(H_{\varepsilon}+\varepsilon I_n)^{-1}$ is never applied to an element of $\ker(H_{\varepsilon})$, where it would blow up as $\varepsilon\to 0$, since $M_{\varepsilon}\in\range(H_{\varepsilon})$. Least-Squares is particularly nice because its blocks are fixed and do not depend on $\varepsilon$; in general a uniform control will be necessary.

\subsection{Uniform Range Condition}
The Least-Squares calculation rests on two ingredients: $H_{\varepsilon}\succeq0$ and a uniformly bounded factorization $M_{\varepsilon}=H_{\varepsilon}W_{\varepsilon}$. This inspires the following assumption requiring both along the regularized solutions.
\begin{enumerate}[label={\bf(R)}]
\item \label{assum:range} {[Uniform range condition]} Let $O\subset\Theta$ be open and definable, fix $\bar{\eps}>0$, and let $J_F$ be a definable conservative Jacobian for $F$. For every compact set $K\subset O$, there exists $C_K>0$ such that, for every $\theta\in K$, $0<\varepsilon\leq\bar{\eps}$, and $(H_{\varepsilon}\ M_{\varepsilon})\in J_F(\xsepst,\theta)$,
the block $H_{\varepsilon}$ is symmetric PSD and
\[
    M_{\varepsilon}=H_{\varepsilon}W_{\varepsilon}
    \qquad\text{for some }W_{\varepsilon}\in\mbR^{n\times m}
    \text{ satisfying }\|W_{\varepsilon}\|\leq C_K.
\]
\end{enumerate}
The factorization says that the columns of $M_{\varepsilon}$ lie in $\range(H_{\varepsilon})$, while the bound controls a choice of preimage uniformly in $\theta$, $\varepsilon$, and all admissible blocks. For $J_F=\partial F$, convexity of $f(\cdot,\theta)$ ensures that the $H$-blocks are PSD. For a more general conservative Jacobian, symmetry and positive semidefiniteness of its $x$-blocks are required.
\begin{example}
    [Assumption \ref{assum:range} fails] Consider the one-dimensional problem with $f(x, \theta) = x^4/4 + \theta x$. The optimality condition is $F(x, \theta) = x^3 + \theta = 0$, so $\Heps = 3(\xsepst)^2$ and $\Meps = 1$. Thus, for $\theta = 0$, $\xseps(0) = 0$ so $1 = \Meps \notin \range(\Heps) = \{0\}$. This agrees with the fact that $\minsol(\theta) = -\theta^{1/3}$ is not locally Lipschitz at $0$, thus not path-differentiable. 
\end{example}
\begin{theorem}
    [Regularity from uniformly bounded implicit Jacobians]\label{thm_loc_bound_deriv_imply_cont}
    Let $O\subset\Theta$ be open and definable and assume that \ref{assum:f_reg} and \ref{assum:f_def} hold. For every $\varepsilon>0$, let $J_{\xseps}$ be a conservative Jacobian of $\xseps$ on $O$. Suppose that, for every compact set $K\subset O$, there exist $C_K>0$ and $\varepsilon_K>0$ such that
    \begin{equation}\label{eq:bounded_derivates}
        \sup\left\{\|V\|:\theta\in K,\ 0<\varepsilon\leq\varepsilon_K,\ V\in J_{\xseps}(\theta)\right\}
        \leq C_K.
    \end{equation}
    Then $\minsol$ is locally Lipschitz on $O$, the joint selection $\xs$ is continuous on $O\times\mbR_+$, and $\xseps\to\minsol$ uniformly on compact subsets of $O$ as $\etz$. If \ref{assum:range} holds and $J_{\xseps}$ is chosen as the implicit conservative Jacobian in \eqref{eq:regularized_implicit_jacobian}, then \eqref{eq:bounded_derivates} holds with $\varepsilon_K=\bar\eps$ and all subsequent conclusions follow.
\end{theorem}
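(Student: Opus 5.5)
The plan is to turn the uniform derivative bound into a uniform Lipschitz bound for the maps $\xseps$, and then pass to the limit using Browder's convergence together with the equicontinuity of the family.

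\textbf{Step 1: Lipschitz bound for each $\xseps$.} Fix a compact $K\subset O$. Enlarge it to a compact convex-by-pieces neighborhood: for each $\theta_0\in K$ pick a closed ball $\bar B(\theta_0,r)\subset O$, and use the compact $K' := K+\bar B(0,r)\subset O$ for a uniform $r$. Apply \eqref{eq:bounded_derivates} on $K'$, with constants $C_{K'}$ and $\varepsilon_{K'}$.

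For $\theta,\theta'$ in a ball $B\subset K'$, take the segment $\mu(t)=\theta+t(\theta'-\theta)$. Each $\xseps$ is locally Lipschitz, since it is path-differentiable by \thmref{thm_IFT_other}. The conservativity identity then gives
\[
\xseps(\theta')-\xseps(\theta)=\int_0^1 V(t)(\theta'-\theta)\,\mathrm{d}t
\]
for any measurable selection $V(t)\in J_{\xseps}(\mu(t))$. Hence $\|\xseps(\theta')-\xseps(\theta)\|\le C_{K'}\|\theta'-\theta\|$ for all $0<\varepsilon\le\varepsilon_{K'}$. So on each such ball the family $\{\xseps\}_{0<\varepsilon\le\varepsilon_{K'}}$ is equi-Lipschitz.

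\textbf{Step 2: Limit and uniform convergence.} For fixed $\theta$, Browder's classical result gives $\xseps(\theta)\to\minsol(\theta)$. Passing to the limit in the Lipschitz inequality shows that $\minsol$ is $C_{K'}$-Lipschitz on each ball, and hence locally Lipschitz on $O$.

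Uniform convergence on $K$ follows from pointwise convergence plus equicontinuity on a compact set. Cover $K$ by finitely many $\delta$-balls and use the triangle inequality $\|\xseps(\theta)-\minsol(\theta)\|\le 2C_{K'}\delta+\max_i\|\xseps(\theta_i)-\minsol(\theta_i)\|$.

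Joint continuity of $\xs$ on $O\times\mbR_+$ then splits into two cases:
\begin{itemize}
\item At points with $\varepsilon_0>0$, continuity in $\varepsilon$ for fixed $\theta$ comes from strong convexity. The optimality conditions and monotonicity of $F$ give $\|\xseps(\theta)-x^\star_{\varepsilon'}(\theta)\|\le|\varepsilon-\varepsilon'|\,\|x^\star_{\varepsilon'}(\theta)\|/\varepsilon$. Combined with the Lipschitz bound in $\theta$ (or with local Lipschitzness of $F$ and the IFT for $\varepsilon_0>\varepsilon_{K'}$), this gives joint continuity.
\item At points $(\theta,0)$, joint continuity is exactly uniform convergence on a compact neighborhood plus continuity of $\minsol$.
\end{itemize}
Alternatively, one can avoid the case $\varepsilon>\varepsilon_{K'}$ altogether, since continuity there follows from the standard strongly convex IFT.

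\textbf{Step 3: The range-condition part.} Under \ref{assum:range}, any $V\in J_{\xseps}(\theta)$ from \eqref{eq:regularized_implicit_jacobian} has the form $V=-(\Heps+\varepsilon I)^{-1}\Heps W_\varepsilon$. Because $\Heps$ is symmetric PSD, diagonalization shows $(\Heps+\varepsilon I)^{-1}\Heps$ has eigenvalues $\lambda/(\lambda+\varepsilon)\in[0,1)$, so its operator norm is at most $1$. Hence $\|V\|\le\|W_\varepsilon\|\le C_K$ for all $\theta\in K$ and $0<\varepsilon\le\bar\eps$, which is \eqref{eq:bounded_derivates} with $\varepsilon_K=\bar\eps$. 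The same norm is used throughout, or one adjusts by equivalence constants.

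\textbf{Main obstacle.} The delicate point is Step 1. One must make sure the curve stays inside the compact set where the bound holds, which is why the statement is localized to balls. One must also justify integrating along the segment using an arbitrary selection, which is allowed because conservativity holds for all $V$ at almost every $t$.
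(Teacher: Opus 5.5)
Your proposal is correct, and its core matches the paper's proof. Both arguments integrate the conservative Jacobian along segments inside a closed ball in $O$ to get an equi-Lipschitz bound for $\{\xseps\}_{0<\varepsilon\le\varepsilon_K}$. Both then pass to the limit with Browder's pointwise convergence (\thmref{thm_Browder}) to make $\minsol$ locally Lipschitz, and both bound $(\Heps+\varepsilon I_n)^{-1}\Heps$ spectrally under \ref{assum:range}.

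Where you differ is in how the remaining conclusions are ordered and justified.
\begin{itemize}
\item \emph{Paper's order.} It first proves joint continuity of $\xs$: at $\varepsilon=0$ by a direct triangle inequality, and at $\varepsilon>0$ by citing the implicit function theorem with $(\theta,\varepsilon)$ as parameters. It then reads off uniform convergence from uniform continuity of $\xs$ on $K\times[0,a]$.
\item \emph{Your order.} You obtain uniform convergence directly from equicontinuity plus pointwise convergence on a finite net, and deduce continuity at $\varepsilon=0$ from it. At $\varepsilon_0>0$ you use the elementary monotonicity estimate $\|\xseps(\theta)-x^\star_{\varepsilon'}(\theta)\|\le|\varepsilon-\varepsilon'|\,\|x^\star_{\varepsilon'}(\theta)\|/\varepsilon$.
\end{itemize}
Your ordering makes uniform convergence independent of continuity at positive $\varepsilon$. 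It also replaces the IFT citation with a self-contained bound. Splitting as $\|\xseps(\theta)-x^\star_{\varepsilon_0}(\theta)\|+\|x^\star_{\varepsilon_0}(\theta)-x^\star_{\varepsilon_0}(\theta_0)\|$ treats every $\varepsilon_0>0$ at once, because $x^\star_{\varepsilon_0}$ is continuous and hence locally bounded. So the case distinction relative to $\varepsilon_{K'}$ and the fallback to the IFT are unnecessary. The paper's version is simply shorter to write.

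One small correction: local Lipschitz continuity of each $\xseps$ does not follow from \thmref{thm_IFT_other}. That theorem takes a locally Lipschitz implicit function as an input rather than producing one. In this statement it is built into the hypothesis that $J_{\xseps}$ is a conservative Jacobian of $\xseps$.
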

\begin{proof}
    We first verify the last claim. Assume \ref{assum:range}, take $J_{\xseps}$ from \eqref{eq:regularized_implicit_jacobian}, fix a compact set $K\subset O$, and let $\theta\in K$, $0<\varepsilon\leq\bar\eps$, and $V\in J_{\xseps}(\theta)$. There is a block $(H_{\varepsilon}\ M_{\varepsilon})\in J_F(\xsepst,\theta)$ such that
    \[
        V=-(H_{\varepsilon}+\varepsilon I_n)^{-1}M_{\varepsilon}
        =-(H_{\varepsilon}+\varepsilon I_n)^{-1}H_{\varepsilon}W_{\varepsilon}.
    \]
    The eigenvalues of $(H_{\varepsilon}+\varepsilon I_n)^{-1}H_{\varepsilon}$ are of the form $\lambda/(\lambda+\varepsilon)\in[0,1]$. Hence
    \[
        0\preceq(H_{\varepsilon}+\varepsilon I_n)^{-1}H_{\varepsilon}\preceq I_n
        \quad\text{and}\quad
        \|V\|\leq\|W_{\varepsilon}\|\leq C_K
    \]
    as claimed.

    We now use only \eqref{eq:bounded_derivates}. Fix $\theta_0\in O$ and choose $r>0$ such that the compact convex set $K_0:=\overline B(\theta_0,r)$ ($\overline{B}$ is the closed ball) is contained in $O$. For $\theta,\eta\in K_0$, let $\mu(t)=\eta+t(\theta-\eta)$. If $0<\varepsilon\leq\varepsilon_{K_0}$, path-differentiability gives, for almost every $t\in[0,1]$,
    \[
        \frac{d}{dt}\xseps(\mu(t))
        =V(t)(\theta-\eta),
        \qquad
        V(t)\in J_{\xseps}(\mu(t)).
    \]
    Using this and invoking \eqref{eq:bounded_derivates} gives
    \begin{align*}
        \|x_\varepsilon^\star(\theta)-x_\varepsilon^\star(\eta)\|=\left\|\int_0^1
        \frac{d}{dt}x_\varepsilon^\star(\mu(t))
        dt\right\|
        &\leq
        \int_0^1
        \left\|
        \frac{d}{dt}x_\varepsilon^\star(\mu(t))
        \right\|dt \leq
        C_{K_0}\|\theta-\eta\|.
    \end{align*}
    Since $\xseps\to\minsol$ as $\etz$ pointwise in $\theta$ (see Theorem~\ref{thm_Browder}), continuity of $\|\cdot\|$ finally gives
    \begin{equation*}
            \|\minsol(\theta)-\minsol(\eta)\|
                =\lim\limits_{\etz}\|\xseps(\theta)-\xseps(\eta)\| \leq C_{K_0} \|\theta - \eta\|.
    \end{equation*}
    Thus, $\minsol$ is locally Lipschitz on $O$. 

    Consider now a sequence $(\theta_k, \varepsilon_k) \rightarrow (\theta_0, 0)$ with $\varepsilon_k \geq 0$. For $k$ large enough, $\theta_k$ and $\theta_0$ lie in $K_0$, hence
    \begin{equation*}
        \begin{aligned}
            \|\xs(\theta_k, \varepsilon_k) - \xs(\theta_0,0)\|
            &\leq \|\xs_{\varepsilon_k}(\theta_k)-\xs_{\varepsilon_k}(\theta_0)\| + \|\xs_{\varepsilon_k}(\theta_0)-\minsol(\theta_0)\|\\
            &\leq C_{K_0} \|\theta_k - \theta_0\| + \|\xs_{\varepsilon_k}(\theta_0) - \minsol(\theta_0)\| \rightarrow 0
        \end{aligned}
    \end{equation*}
    because $\theta_k\to\theta_0$ and $\xs(\theta_0,\varepsilon_k)\to\minsol(\theta_0)$.
    This proves continuity on $O\times\{0\}$. At points with $\varepsilon>0$, joint continuity in $(\theta,\varepsilon)$ follows from strong convexity and the implicit function theorem applied with both variables as parameters \citep{bolte_pauwels_silveti-falls_2024}. Hence $\xs$ is continuous on $O\times\mbR_+$.

    Finally, let $K\subset O$ be compact and fix $a>0$. The restriction of $\xs$ to the compact set $K\times[0,a]$ is uniformly continuous. Since $\xs(\theta,0)=\minsol(\theta)$, it follows that
    \begin{equation*}
        \sup_{\theta\in K}\|\xseps(\theta)-\minsol(\theta)\|
        \longrightarrow0
        \qquad\text{as }\varepsilon\downarrow0
    \end{equation*}
    as desired.
\end{proof}
We point out that the arguments above give joint continuity in $(\theta,\varepsilon)$, but not joint local Lipschitz continuity.

\begin{remark}[Definability of the construction]\label{rem_verif_assumptions}
\propref{prop:def_tikhonov} gives the definability of $\xs$, while \propref{prop_defin_derivative} gives the definability of $J_{\xs}$ and the conservativity of each fixed-$\varepsilon$ slice $J_{\xseps}$. These results verify $(i)$ and $(ii)$ of \thmref{thm_Schechtman}. The compact-uniform convergence in $(iii)$ follows from \thmref{thm_loc_bound_deriv_imply_cont}, while Assumption~\ref{assum:range} gives the local bound in $(iv)$.
\end{remark}

\begin{theorem}[Path-differentiability under the uniform range condition]\label{thm_general}
    Assume that \ref{assum:f_reg}, \ref{assum:f_def} and \ref{assum:range} hold, and let $J_{\xseps}$ be the implicit conservative Jacobian in \eqref{eq:regularized_implicit_jacobian} for $0<\varepsilon\leq\bar\eps$. Then $\minsol$ is path-differentiable on $O$. Moreover, $J_{\minsol}$ defined in \eqref{eq:J_min} is a definable conservative Jacobian of $\minsol$.
\end{theorem}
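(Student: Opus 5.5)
The plan is to obtain the result as a direct application of \thmref{thm_Schechtman}, taking $\bar\eps$ from Assumption~\ref{assum:range} and the open definable set $O$ on which that assumption is stated. We must check its four hypotheses $(i)$--$(iv)$ for the family $J_{\xs}(\theta,\varepsilon):=J_{\xseps}(\theta)$ built from \eqref{eq:regularized_implicit_jacobian}, as outlined in \remref{rem_verif_assumptions}.

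First, for each fixed $0<\varepsilon\leq\bar\eps$, the regularized objective is strongly convex and $\mC^{1,1}$. Hence $\xseps$ is well defined, single valued and locally Lipschitz. Assumption~\ref{assum:range} makes each $H_\varepsilon$ symmetric PSD, so every $H_\varepsilon+\varepsilon I_n$ is invertible. \thmref{thm_IFT_other}, applied to $F_\varepsilon$ with conservative Jacobian $(H_\varepsilon+\varepsilon I_n\ M_\varepsilon)$, then shows that \eqref{eq:regularized_implicit_jacobian} is a conservative Jacobian of $\xseps$ on $O$. This gives $(ii)$; \propref{prop_defin_derivative} records the same conclusion.

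Next, definability in $(i)$. The map $(\theta,\varepsilon)\mapsto\xseps(\theta)$ is definable because its graph is described by the first-order formula $F(x,\theta)+\varepsilon x=0$ with $\varepsilon>0$, where $F$ is definable. The map $\minsol$ is definable because it is the unique minimizer of $\|x\|$ over the definable set $\{x: F(x,\theta)=0\}$. The graph of $J_{\xs}$ is the image of the definable graph of $J_F$, composed with $x=\xsepst$, under the definable map $(H,M)\mapsto-(H+\varepsilon I)^{-1}M$. These facts are what \propref{prop:def_tikhonov} and \propref{prop_defin_derivative} provide, and I would simply cite them.

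For $(iv)$, the first part of \thmref{thm_loc_bound_deriv_imply_cont} already shows, under \ref{assum:range}, that $\|V\|\leq\|W_\varepsilon\|\leq C_K$ for every $V\in J_{\xseps}(\theta)$, $\theta\in K$ and $0<\varepsilon\leq\bar\eps$. The key fact is that $(H_\varepsilon+\varepsilon I)^{-1}H_\varepsilon$ has spectrum in $[0,1]$, so the supremum in $(iv)$ is at most $C_K$. For $(iii)$, the same theorem then gives uniform convergence $\xseps\to\minsol$ on compact subsets of $O$.

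All hypotheses hold, so \thmref{thm_Schechtman} shows that $\minsol$ and $J_{\minsol}$ are definable on $O$ and that $J_{\minsol}$ is conservative for $\minsol$. Path-differentiability of $\minsol$ follows, and local Lipschitz continuity is also supplied by \thmref{thm_loc_bound_deriv_imply_cont}.

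The main obstacle is the definability bookkeeping in $(i)$. In particular, the joint family over $O\times]0,\bar\eps]$ must be definable as a single set-valued map, and not merely slice by slice, and the convergence requirement $\xs(\theta_k,\varepsilon_k)\to\minsol(\theta)$ in \eqref{eq:J_min} must be consistent with continuity. That consistency is automatic here, since \thmref{thm_loc_bound_deriv_imply_cont} gives joint continuity of $\xs$ on $O\times\mbR_+$.
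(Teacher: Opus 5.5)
Your proposal is correct and follows the paper's proof: it also proves Theorem~\ref{thm_general} by checking hypotheses $(i)$--$(iv)$ of Theorem~\ref{thm_Schechtman}, as recorded in Remark~\ref{rem_verif_assumptions}. Like the paper, you cite Propositions~\ref{prop:def_tikhonov} and~\ref{prop_defin_derivative} for $(i)$--$(ii)$, and Theorem~\ref{thm_loc_bound_deriv_imply_cont} together with Assumption~\ref{assum:range} for $(iii)$--$(iv)$.
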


\begin{proof}
    By \remref{rem_verif_assumptions}, all the hypotheses of \thmref{thm_Schechtman} hold for the restricted family on $O\times]0,\bar\eps]$. Hence $J_{\minsol}$ is a definable conservative Jacobian of $\minsol$, which is therefore path-differentiable on $O$.
\end{proof}

By joint continuity, the condition $\xs(\theta_k,\varepsilon_k)\to\minsol(\theta)$ used in \eqref{eq:J_min} holds automatically in our settings. Thus $J_{\minsol}$ can be rewritten as
\[
    J_{\minsol}\colon\theta
    \rightrightarrows\left\{V\in\mbR^{n\times m}:
    \begin{array}{l}
        \exists\ (\theta_k,\varepsilon_k,V_k)_{k\in\NN}\ \text{with }
        \theta_k\in O,\ 0<\varepsilon_k\leq\bar\eps,\\
        \theta_k\to\theta,\ \varepsilon_k\downarrow0,\ V_k\to V,\ V_k\in J_{\xs}(\theta_k,\varepsilon_k)
    \end{array}
    \right\}.
\]

\subsection{Pseudoinverse Formula}
\thmref{thm_general} bounds the regularized conservative Jacobians in a way that ensures path-differentiability but does not identify their cluster points. The Least-Squares example suggests the formula $-H^{\dagger}M$ but, as the following example shows, this formula can fail when curvature vanishes along the regularized trajectory.

\begin{example}[Slowly vanishing curvature]\label{ex_vanishing_eigenvalues}
    Let $O:=]-1,1[$ and $f(x,\theta):=\psi(x-\theta)$, where
    \[
        \psi(t):=\begin{cases}
            t^4/4, & |t|\leq 1,\\
            |t|-3/4, & |t|>1.
        \end{cases}
    \]
    The function $\psi$ is convex, semialgebraic, and belongs to $\mC^{1,1}(\mathbb{R})$. Its unique minimizer is zero, so $\minsol(\theta)=\theta$ and $\partial\minsol/\partial\theta\equiv 1$. At the unregularized solution, however, the two blocks $H$ and $M$ are both $0$ (since $\theta \in O = ]-1, +1[$), and hence $-H^{\dagger}M=0$.

    Fix $\theta\neq 0$. The regularized solution lies between $0$ and $\theta$, so $|\xsepst-\theta|<1$ and the quartic branch of $\psi$ is active. The optimality condition and the corresponding blocks are
    \[
        (\xsepst-\theta)^3+\varepsilon\xsepst=0,
        \qquad H_{\varepsilon}=3(\xsepst-\theta)^2,
        \qquad M_{\varepsilon}=-H_{\varepsilon},
        \qquad W_{\varepsilon}=-1.
    \]
    Rearranging the optimality condition and dividing by $\varepsilon$ gives
    \[
        \left(\frac{\xsepst-\theta}{\varepsilon^{1/3}}\right)^3=-\xsepst.
    \]
    Since $\xsepst\to\theta$ and the real cube-root function is continuous, it follows that
    \[
        \frac{\xsepst-\theta}{\varepsilon^{1/3}}\longrightarrow-\sqrt[3]{\theta},
        \qquad\text{or equivalently}\qquad
        \xsepst - \theta\sim-\sqrt[3]{\theta}\,\varepsilon^{1/3}.
    \]
    Therefore,
    \[
        H_{\varepsilon}\sim3|\theta|^{2/3}\varepsilon^{2/3},
        \qquad -(H_{\varepsilon}+\varepsilon)^{-1}M_{\varepsilon}\longrightarrow 1.
    \]
    Thus the regularized derivatives recover the derivative of $\minsol$, although the limiting blocks alone give $0$. The mismatch is due to $H_{\varepsilon}$ vanishing more slowly than $\varepsilon$.
\end{example}

In general, consider a sequence $\varepsilon_k\downarrow 0$ and convergent selected blocks $H_k$. After passing to a subsequence, we may suppose that each ratio $\lambda_i(H_k)/\varepsilon_k$ has a limit in $[0,+\infty]$. For an eigenvalue such that $\lambda_i(H_k)\to 0$, the corresponding term has one of the following limits:
\[
    \frac{\lambda_i(H_k)}{\varepsilon_k+\lambda_i(H_k)}\longrightarrow
    \left\{\begin{array}{lll}
        0, & \lambda_i(H_k)=o(\varepsilon_k), & \textnormal{fast vanishing},\\
        c/(1+c), & \lambda_i(H_k)/\varepsilon_k\to c\in]0,+\infty[, & \textnormal{critically vanishing},\\
        1, & \varepsilon_k=o(\lambda_i(H_k)), & \textnormal{slow vanishing}.
    \end{array}\right.
\]
Eigenvalues with a positive limit also have multiplier one, as prescribed by $H^{\dagger}H$, while identically zero eigenvalues belong to the fast regime and therefore cause no difficulty. Thus agreement with $H^{\dagger}H$ requires every eigenvalue converging to zero to vanish faster than $\varepsilon_k$.

To summarize, if $p>1$, $0<q<1$, $c>0$, and $H_{\varepsilon}=\diag(0,\varepsilon^p,c\varepsilon,\varepsilon^q,1)$, then
\[
    (H_{\varepsilon}+\varepsilon I_n)^{-1}H_{\varepsilon}\longrightarrow\diag(0,0,c/(1+c),1,1)\neq H^{\dagger}H=\diag(0,0,0,0,1).
\]
The critical and slow regimes can retain a nonzero contribution in directions that belong to $\ker H$ at the limit. This is the contribution represented by the residual $S$ below.

\begin{remark}[Reachable set]\label{rem_reachability}
    Fix $\theta\in O$. Since $\xsepst\to\minsol(\theta)$ and $J_F$ is locally bounded, every sequence of blocks along the regularized trajectory admits a convergent subsequence. The closed graph property of $J_F$ then guarantees its limit in $J_F(\minsol(\theta),\theta)$. We call the set of all such limits the \emph{reachable} set denoted by
    \[
        \mR(\theta):=\left\{(H\ M)\in J_F(\minsol(\theta),\theta):
        \begin{array}{l}
            \exists\ \varepsilon_k\downarrow0,\ (H_k\ M_k)\in J_F(x_{\varepsilon_k}^{\star}(\theta),\theta),\\
            (H_k\ M_k)\to(H\ M)
        \end{array}\right\}.
    \]
    A corresponding cluster point of the regularized derivatives belongs to $J_{\minsol}(\theta)$ by taking $\theta_k\equiv\theta$ in \eqref{eq:J_min}. However, \eqref{eq:J_min} also permits $\theta_k\to\theta$ so that the full set $J_{\minsol}(\theta)$ may contain additional elements.
\end{remark}

Example~\ref{ex_vanishing_eigenvalues} shows that vanishing curvature from the limiting block can affect the $-H^{\dagger}M$ formula. The next theorem makes this precise by introducing a residual $S$; its proof is given in \secref{ss_proof_residual_formula}.

\begin{theorem}[Residual formula]\label{thm_formula_memory}
    Assume that \ref{assum:f_reg}, \ref{assum:f_def} and \ref{assum:range} hold, and fix $\theta\in O$. Let $\varepsilon_k\downarrow0$ with $0<\varepsilon_k\leq\bar\eps$, and select $(H_k\ M_k)\in J_F(x_{\varepsilon_k}^{\star}(\theta),\theta)$. Set
    \[
        R_k:=(H_k+\varepsilon_k I_n)^{-1}H_k,\qquad
        W_k:=H_k^{\dagger}M_k,\qquad
        V_k:=-(H_k+\varepsilon_k I_n)^{-1}M_k=-R_kW_k.
    \]
    Then $V_k\in J_{\xs}(\theta,\varepsilon_k)$, and the sequences $(W_k)_k$ and $(V_k)_k$ are bounded. If $V_k\to V$ along a subsequence, then, after passing to a further common subsequence,
    \[
        (H_k\ M_k)\to(H\ M)\in\mR(\theta),\qquad W_k\to W,\qquad R_k\to R.
    \]
    Moreover, $M=HW$ and $S:=R-H^{\dagger}H=\Pi_{\ker H}R\,\Pi_{\ker H}$ with $0\preceq S\preceq\Pi_{\ker H}$,
    while the limit satisfies
    \[
        V=-H^{\dagger}M-SW\in J_{\minsol}(\theta),
        \qquad HV+M=0.
    \]
\end{theorem}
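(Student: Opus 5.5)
The plan is to work entirely with symmetric PSD matrices, using the factorization supplied by \ref{assum:range}, and then to pass to the limit through spectral decompositions.

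\textbf{Membership and boundedness.} Membership $V_k\in J_{\xs}(\theta,\varepsilon_k)$ is immediate from \eqref{eq:regularized_implicit_jacobian}. For boundedness, \ref{assum:range} applied with $K=\{\theta\}$ gives $M_k=H_k\tilde W_k$ with $\|\tilde W_k\|\le C$. Since $H_k$ is symmetric, $H_k^\dagger H_k=\Pi_{\range H_k}$, so $W_k=H_k^\dagger H_k\tilde W_k=\Pi_{\range H_k}\tilde W_k$. This yields $\|W_k\|\le C$ and $M_k=H_kW_k$. Moreover $R_k$ commutes with $H_k$, has eigenvalues $\lambda/(\lambda+\varepsilon_k)\in[0,1]$ and vanishes on $\ker H_k$, so $R_kH_k^\dagger H_k=R_k$. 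Hence $V_k=-R_kW_k$ and $\|V_k\|\le C$.

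\textbf{Extracting limits.} $J_F$ is locally bounded and $x^\star_{\varepsilon_k}(\theta)\to\minsol(\theta)$ by \thmref{thm_Browder}. So the blocks $(H_k\ M_k)$, the matrices $W_k$ and the matrices $R_k$ (with $0\preceq R_k\preceq I$) are all bounded, and we extract a common subsequence along which all of them converge. The limit $(H\ M)$ lies in $\mR(\theta)$ by definition, using the closed graph of $J_F$. Passing to the limit in $M_k=H_kW_k$ gives $M=HW$.

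\textbf{Identifying $R$.} Write $H_k=U_k\Lambda_kU_k\tp$ with $U_k$ orthogonal. After further extraction, $U_k\to U$, $\Lambda_k\to\Lambda$ (so $H=U\Lambda U\tp$), and each ratio $\lambda_i^k/\varepsilon_k$ converges in $[0,\infty]$. Then $R_k=U_kD_kU_k\tp$ with $D_k=\diag(\lambda_i^k/(\lambda_i^k+\varepsilon_k))\to D$. If $\lambda_i>0$, then $D_{ii}=1$. So $R=UDU\tp$, with $D_{ii}=1$ on the eigenvectors spanning $\range H$ and $D_{ii}\in[0,1]$ on those spanning $\ker H$. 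It follows that $R=H^\dagger H+S$ with $S=\Pi_{\ker H}R\,\Pi_{\ker H}$ and $0\preceq S\preceq\Pi_{\ker H}$.

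An alternative route avoids eigen-bookkeeping. From $R_k(H_k+\varepsilon_kI)=H_k$ we get $RH=H$ in the limit. Since $R$ is symmetric and commutes with $H$, it acts as the identity on $\range H$ and leaves $\ker H$ invariant. The bounds $0\preceq R\preceq I$ persist in the limit. This is probably cleaner and is the one I would write.

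\textbf{Limit formula.} We have $V=-RW=-H^\dagger HW-SW=-H^\dagger M-SW$. To show $HV+M=0$, observe that $HS=0$ because $S$ maps into $\ker H$. Also $HH^\dagger M=M$, since $M=HW\in\range H$. Hence $HV=-M$.

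Finally, $V\in J_{\minsol}(\theta)$ follows from \thmref{thm_general}, \eqref{eq:J_min} with $\theta_k\equiv\theta$, and the joint continuity from \thmref{thm_loc_bound_deriv_imply_cont}.

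\textbf{Main obstacle.} The main subtlety is the identification of $R$, in particular the claim that $H^\dagger H$ is exactly the range part of $R$. The pseudoinverse is discontinuous, so the limit of $H_k^\dagger H_k$ is not $H^\dagger H$ in general. I therefore avoid using $\lim W_k=\lim H_k^\dagger M_k$ beyond the fact that $M=HW$. Note that $W$ need not equal $H^\dagger M$; the formula $V=-H^\dagger M-SW$ remains consistent with this because $H^\dagger HW=H^\dagger M$.
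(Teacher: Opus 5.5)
Your proposal is correct and follows the paper's proof essentially step for step. You use the same factorization from \ref{assum:range} with $K=\{\theta\}$, the same projector $H_k^\dagger H_k$ to bound $W_k$, and the same common-subsequence extraction, and your preferred ``alternative route'' (passing to the limit in $R_k(H_k+\varepsilon_kI_n)=H_k$ to get $RH=H=HR$) is exactly how the paper identifies $S=\Pi_{\ker H}R\,\Pi_{\ker H}$. The one difference is that $V\in J_{\minsol}(\theta)$ needs only \eqref{eq:J_min} with $\theta_k\equiv\theta$ and the pointwise convergence of \thmref{thm_Browder}, so your appeals to \thmref{thm_general} and to joint continuity are harmless but unnecessary.
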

The matrix $H^{\dagger}H$ is the identity when restricted to $\range(H)$, whereas $S$ acts only on $\ker(H)$. Thus $S$ accounts for the regularized curvature that survives in directions where the limiting curvature is zero. The correction $-SW$ does not affect the limiting equation because $HS=0$. We give conditions in the following subsection to ensure $S=0$ although, for a given limit, the pseudoinverse formula already holds once the weaker condition $SW=0$ is satisfied.
\begin{remark}
    \label{rem_quartic_example} [Illustration on the quartic example] For the quartic example and fixed $\theta\neq0$, we have $H=M=0$, $W=-1$, and $R=S=1$, so the residual formula gives $V=1$. At $\theta=0$, the regularized solution is identically zero, hence the fixed trajectory gives $R=S=V=0$ and therefore $0\in J_{\minsol}(0)$. On the other hand, $1\in J_{\minsol}(\theta)$ for every $\theta$. Since $J_{\minsol}$ has closed graph, letting $\theta\to0$ shows that $1\in J_{\minsol}(0)$ as well. The theorem therefore identifies some elements of $J_{\minsol}$ without necessarily characterizing the whole set. In practice, one admissible element is sufficient for a gradient-type method applied to \eqref{Q}, see \secref{sss_Outer}.
\end{remark}

\subsubsection{Conditions Yielding the Formula}\label{sss_conditions}
We now give two pointwise conditions that force $S=0$, both stated with respect to a fixed $\theta\in O$. The first rules out small positive eigenvalues, while the second allows them provided they vanish faster than $\varepsilon$.
\begin{enumerate}[label={\bf(G)},nosep]
\item \label{assum:eigenvalues_gap} {[Spectral gap]} there exist $\lambda^*>0$ and $\varepsilon_G\in]0,\bar\eps]$ such that, for every $0<\varepsilon\leq\varepsilon_G$ and every $(H_{\varepsilon}\ M_{\varepsilon})\in J_F(\xsepst,\theta)$,
\[\operatorname{spec}(H_{\varepsilon})\subset\{0\}\cup[\lambda^*,+\infty[.
\]
\end{enumerate}
\begin{enumerate}[label={\bf(V)},nosep]
\item \label{assum:eigenvalues_vanishing} {[Fast vanishing]} for every sequence $\varepsilon_k\downarrow0$ with $0<\varepsilon_k\leq\bar\eps$ and every selection $(H_k\ M_k)\in J_F(x_{\varepsilon_k}^{\star}(\theta),\theta)$ such that $(H_k)_k$ converges, the ordered eigenvalues $\lambda_1(H_k)\geq\cdots\geq\lambda_n(H_k)$ satisfy
\[
    \lambda_i(H_k)\to0\quad\Longrightarrow\quad\lambda_i(H_k)=o(\varepsilon_k),
    \qquad i=1,\ldots,n.
\]
\end{enumerate}
\begin{theorem}[Reachable pseudoinverse formula]\label{thm_formula}
    Assume \ref{assum:f_reg}, \ref{assum:f_def} and \ref{assum:range}, fix $\theta\in O$, and suppose that either \ref{assum:eigenvalues_gap} or \ref{assum:eigenvalues_vanishing} holds at $\theta$. Then the conservative Jacobian $J_{\minsol}$ defined in \eqref{eq:J_min} satisfies
    \begin{equation}\label{formula_eigenvalues_controlled}
        \left\{-H^{\dagger}M \colon (H\ M)\in \mR(\theta)\right\}\subset J_{\minsol}(\theta).
    \end{equation}
\end{theorem}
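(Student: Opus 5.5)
The plan is to reduce the statement to \thmref{thm_formula_memory} by showing that, under either \ref{assum:eigenvalues_gap} or \ref{assum:eigenvalues_vanishing}, the residual $S$ vanishes for every reachable limit.

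Fix $(H\ M)\in\mR(\theta)$. By definition there are $\varepsilon_k\downarrow0$ and blocks $(H_k\ M_k)\in J_F(x^\star_{\varepsilon_k}(\theta),\theta)$ with $(H_k\ M_k)\to(H\ M)$. We may take $\varepsilon_k\leq\bar\eps$, and also $\varepsilon_k\leq\varepsilon_G$ in case \ref{assum:eigenvalues_gap}. \thmref{thm_formula_memory} then applies to this sequence. It gives that $V_k=-R_kW_k\in J_{\xs}(\theta,\varepsilon_k)$ and that $(V_k)$ is bounded. Extracting a subsequence, we get $V_k\to V$, $W_k\to W$ and $R_k\to R$; the blocks still converge to the same $(H\ M)$ since we only passed to a subsequence. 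The theorem yields $V=-H^\dagger M-SW\in J_{\minsol}(\theta)$ with $S=R-H^\dagger H$. It therefore suffices to prove $R=H^\dagger H$, i.e.\ $S=0$.

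To identify $R$, diagonalize. Each $H_k$ is symmetric PSD by \ref{assum:range}, so write $H_k=U_k\Lambda_kU_k\tp$ with $U_k$ orthogonal and eigenvalues ordered $\lambda_1(H_k)\geq\cdots\geq\lambda_n(H_k)$. Then
\[
R_k=U_k\,\diag\!\left(\tfrac{\lambda_i(H_k)}{\lambda_i(H_k)+\varepsilon_k}\right)U_k\tp .
\]
By compactness of the orthogonal group, pass to a further subsequence with $U_k\to U$. By continuity of ordered eigenvalues, $\lambda_i(H_k)\to\lambda_i(H)$, so $H=U\diag(\lambda_i(H))U\tp$.

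The indices now split into two groups:
\begin{itemize}
\item If $\lambda_i(H)>0$, the multiplier tends to $1$.
\item If $\lambda_i(H)=0$, the multiplier tends to $0$. Under \ref{assum:eigenvalues_gap}, each $\lambda_i(H_k)\in\{0\}\cup[\lambda^*,\infty[$ converges to $0$, so it must equal $0$ for large $k$. Under \ref{assum:eigenvalues_vanishing}, which applies because $(H_k)$ converges, $\lambda_i(H_k)=o(\varepsilon_k)$.
\end{itemize}
Hence $R=U\diag(\mathbf 1_{\lambda_i(H)>0})U\tp$, which is the orthogonal projector onto $\range(H)$, namely $H^\dagger H$. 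So $S=0$ and $V=-H^\dagger M\in J_{\minsol}(\theta)$, which proves \eqref{formula_eigenvalues_controlled}.

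The main point to watch is the eigenvector handling. Eigenvalues can be repeated, so the $U_k$ need not converge without extraction. This is harmless: we extract a convergent subsequence of $U_k$ and, because $R$ is already known to be the limit from \thmref{thm_formula_memory}, the result does not depend on which subsequence we took. Everything else is a direct consequence of \thmref{thm_formula_memory}.
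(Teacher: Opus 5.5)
Your proof is correct, and its skeleton matches the paper's. Along a sequence realizing $(H\ M)\in\mR(\theta)$ you show that $R_k\to H^\dagger H$. Then $V_k=-R_kW_k\to -H^\dagger HW=-H^\dagger M$, and you conclude from \eqref{eq:J_min} with $\theta_k\equiv\theta$. You package the first steps by citing \thmref{thm_formula_memory}, so that only $S=0$ remains. The paper's proof instead re-derives the boundedness of $W_k$ and the identity $M=HW$ directly. That difference is purely organizational.

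The genuine difference is in how the limit of $R_k$ is identified.

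\textbf{The paper's route.} It compares $R_k$ with a spectral projector. Under \ref{assum:eigenvalues_gap} this is $\Pi_{\range(H_k)}$; under \ref{assum:eigenvalues_vanishing} it is the projector onto the top $\mathrm{rank}(H)$ eigenspaces. The multiplier estimates show that the distance to this projector tends to zero. The paper then invokes continuity of spectral projectors, since the limiting spectrum separates $0$ from the positive eigenvalues. Under \ref{assum:eigenvalues_gap} it also uses that the number of eigenvalues above $\lambda^*$ stabilizes at $\mathrm{rank}(H)$.

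\textbf{Your route.} You extract convergent orthogonal eigenbases $U_k\to U$ and note that $U$ diagonalizes $H$ with the ordered limiting eigenvalues. You then read off the limit of each multiplier. This handles both spectral conditions with a single mechanism. It uses only compactness of the orthogonal group and continuity of ordered eigenvalues, with no perturbation theory for eigenprojections.

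\textbf{What each buys.} The paper gets convergence of the whole sequence $R_k$ directly. Your argument identifies the limit only along subsequences. That is harmless here, because $R$ is already fixed by \thmref{thm_formula_memory}. Moreover, since every subsequence of $(R_k)$ has a further subsequence converging to $H^\dagger H$, your argument in fact also yields convergence of the full sequence.
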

The proof is given in \secref{ss_proof_pseudoinverse_formula}. Condition~\ref{assum:eigenvalues_gap} includes the uniformly strongly convex setting but also permits eigenvalues that are identically zero. It may also be stated as an \textit{eventual rank stability}, in the sense that $\mathrm{rank}(\Heps) = \mathrm{rank}(H)$ for $\varepsilon$ small enough. Without either spectral condition, Assumptions~\ref{assum:f_reg},~\ref{assum:f_def} and~\ref{assum:range} still give bounded derivatives and path-differentiability, but the residual $S$ may be nonvanishing. This significantly generalizes \citet[Statement~3.1]{vicol_lorraine_pedregosa_duvenaud_grosse_2022}, which considered only the fully quadratic setting.

The relationship between these conditions and standard nonsingularity assumptions is direct. Earlier implicit-differentiation results assume nonsingular $H$-blocks at the solution \citep{bolte_le_pauwels_silveti-falls_2021} (thus near the solution by continuity of the determinant), or uniform nonsingularity \citep{blondel_berthet_cuturi_frostig_hoyer_llinares-lopez_pedregosa_vert_2021}, while related range and spectral conditions appear in \citet{pauwels2023derivatives}. If the $H_{\varepsilon}$-blocks are uniformly positive definite on compact parameter sets and the corresponding $M_{\varepsilon}$-blocks are uniformly bounded, then $W_{\varepsilon}=H_{\varepsilon}^{-1}M_{\varepsilon}$ verifies Assumption~\ref{assum:range}, and the same lower bound gives \ref{assum:eigenvalues_gap}. Unlike uniform nonsingularity, however, Assumption~\ref{assum:range} and \ref{assum:eigenvalues_gap} allow rank-deficient blocks.

As a consistency check, \thmref{thm_formula} recovers the Least-Squares calculation above.
\begin{corollary}[Least-Squares]\label{coro_LS}
For the Least-Squares problem, $A^{\dagger}\in J_{\minsol}(\theta)$ for all $\theta\in\mathbb{R}^m$.
\end{corollary}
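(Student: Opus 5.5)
The plan is to apply \thmref{thm_formula} to the Least-Squares problem \eqref{eq:least-squares}, with $\Theta=O=\mbR^m$ and $J_F=\mathrm{Jac}\,F$. This is a single-valued and continuous conservative Jacobian, since $F(x,\theta)=A\tp(Ax-\theta)$ is linear.

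First, check the standing assumptions. Assumption \ref{assum:f_reg} holds: $f(\cdot,\theta)$ is a convex quadratic, hence $\mC^{1,1}$, and $\mS(\theta)=A^\dagger\theta+\ker(A)\neq\emptyset$. Its gradient $F$ is linear, hence globally Lipschitz in $(x,\theta)$. Assumption \ref{assum:f_def} holds because $f$ is polynomial, hence semialgebraic, and so is $\Theta=\mbR^m$. For Assumption \ref{assum:range}, take any $\bar\eps>0$. Along the trajectory the only block is $(H_\eps\ M_\eps)=(A\tp A\ \ -A\tp)$. Here $A\tp A$ is symmetric PSD, and with $W_\eps\equiv -A^\dagger$ the identity $A\tp AA^\dagger=A\tp$ from \lemref{lem_pseudoinverse_properties} gives $M_\eps=H_\eps W_\eps$. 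The bound $\|W_\eps\|=\|A^\dagger\|$ works as a constant $C_K$ for every compact $K$.

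Next, verify the spectral gap \ref{assum:eigenvalues_gap} at every $\theta$. The block $H_\eps\equiv A\tp A$ does not depend on $\eps$ or $\theta$, so its spectrum is a fixed finite set of nonnegative reals. Let $\lambda^*$ be the smallest positive eigenvalue of $A\tp A$, which exists whenever $A\neq0$. Then $\operatorname{spec}(H_\eps)\subset\{0\}\cup[\lambda^*,+\infty[$ for all $\eps$. If $A=0$, then $H_\eps=0$ and the condition holds trivially with any $\lambda^*$.

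Finally, identify the reachable set and the formula. Because the blocks are constant, $\mR(\theta)=\{(A\tp A\ \ -A\tp)\}$, so \thmref{thm_formula} gives $-(A\tp A)^\dagger(-A\tp)\in J_{\minsol}(\theta)$. It remains to show $(A\tp A)^\dagger A\tp=A^\dagger$. This is a standard pseudoinverse identity, checked via the SVD $A=U\Sigma V\tp$: both sides equal $V\Sigma^\dagger U\tp$. It is presumably among the properties listed in \lemref{lem_pseudoinverse_properties}. This last identification, together with the degenerate case $A=0$, is the only point needing any care; everything else is immediate. Alternatively, the direct computation in the motivating example already shows $\mathrm{Jac}\,\xseps(\theta)\to A^\dagger$ with $\theta_k\equiv\theta$, which exhibits $A^\dagger\in J_{\minsol}(\theta)$ directly from \eqref{eq:J_min} once \thmref{thm_general} is known to apply.
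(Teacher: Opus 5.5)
Your proposal is correct and follows the paper's proof: you verify \ref{assum:range} with $W_\varepsilon\equiv -A^\dagger$, get \ref{assum:eigenvalues_gap} from the fact that the blocks do not depend on $\varepsilon$, identify $\mR(\theta)=\{(A\tp A\ {-A\tp})\}$, and apply \thmref{thm_formula}. The identity $(A\tp A)^\dagger A\tp=A^\dagger$ is exactly the third item of \lemref{lem_pseudoinverse_properties}, and in the degenerate case $A=0$ you may take any positive $C_K$, since \ref{assum:range} requires $C_K>0$.
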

\begin{proof}
For the classical singleton Jacobian used in the motivating example, $O=\mathbb{R}^m$, $\mR(\theta)=\{(A\tp A\ {-A\tp})\}$, and \ref{assum:eigenvalues_gap} holds because the blocks do not depend on $\varepsilon$. Hence \thmref{thm_formula} gives $-(A\tp A)^{\dagger}(-A\tp)=(A\tp A)^{\dagger}A\tp=A^{\dagger}$.
\end{proof}

\subsubsection{Outer problem}\label{sss_Outer}
Before discussing the case of Huber regression, we discuss what can be achieved for the outer problem once elements of $J_{\minsol}$ are obtained. Let $\mT(\theta) := \mL(\minsol(\theta), \theta)$. 

By \thmref{thm_loc_bound_deriv_imply_cont}, $\theta \mapsto (\minsol(\theta), \theta)$ is locally Lipschitz and definable thus path-differentiable. Assume $\mL$ is locally Lipschitz and definable with conservative Jacobian $J_{\mL}$. Then $\mT$ is also locally Lipschitz and definable thus path-differentiable using the composition rules of conservative calculus \citep{bolte_pauwels_2019}, with conservative Jacobian given by
\[J_{\mT} : \theta \setvalued \left\{(L_x\ L_{\theta})(V; I_m) = L_x V + L_{\theta} : (L_x \ L_{\theta}) \in J_{\mL}(\minsol(\theta), \theta), V \in J_{\minsol}(\theta)\right\}.\]
From there, the stochastic subgradient algorithm for conservative fields \citep{davis_drusvyatskiy_kakade_lee_2018, bolte_pauwels_2019} can be employed for the global problem.

\subsection{Application to Huber Regression}\label{ss_Huber}
Huber regression \citep{huber1964} illustrates how Assumption~\ref{assum:range} can hold even when the Hessian blocks change rank along the regularized trajectory. Let $A\in\mbR^{m\times n}$ and consider
\begin{equation}\tag{Hub}\label{H}
    \min_{x\in\mbR^n}\phi(Ax-\theta),
    \qquad \phi(y):=\sum_{i=1}^m h(y_i),
    \qquad h(t):=\begin{cases}t^2/2,& |t|\leq1,\\ |t|-1/2,& |t|>1.\end{cases}
\end{equation}
The function $\phi\colon\mbR^m\to\mbR$ is convex and semialgebraic, with $\phi\in\mC^{1,1}(\mbR^m)\setminus\mC^2(\mbR^m)$. For $y\in\mbR^m$, every $Q=\diag(q)\in\partial(\nabla\phi)(y)$ is diagonal by separability, with
\[
    q_i\in\begin{cases}
        \{1\},& |y_i|<1,\\
        [0,1],& |y_i|=1,\\
        \{0\},& |y_i|>1.
    \end{cases}
\]
Writing $F(x,\theta)=A\tp\nabla\phi(Ax-\theta)$, we use the definable conservative Jacobian
\[
    J_F(x,\theta):=\left\{(A\tp QA\ {-A\tp Q}):Q\in\partial(\nabla\phi)(Ax-\theta)\right\}.
\]
Along the regularized solution, write $Q_{\varepsilon}\in\partial(\nabla\phi)(A\xsepst-\theta)$ and
\[
    H_{\varepsilon}:=A\tp Q_{\varepsilon}A,
    \qquad M_{\varepsilon}:=-A\tp Q_{\varepsilon}.
\]
The matrices $H_{\varepsilon}$ are symmetric PSD, although their rank may change with $\varepsilon$. We can nonetheless state a bound that is uniform over all the possible matrices $Q_{\varepsilon}$ by defining
\begin{equation}\label{eq:chi_A}
    \chi(A):=\max\left\{\|(A_{R,C})^{-1}\|:|R|=|C|\geq1,\ A_{R,C}\text{ nonsingular}\right\},
\end{equation}
where $R\subset\{1,\ldots,m\}$ and $C\subset\{1,\ldots,n\}$, taking $\chi(A)=0$ when $A=0$. This constant is finite because $A$ has only finitely many square submatrices. This leads to two estimates, given in the next corollary; the first estimate controls the regularized systems uniformly in both $\delta$ and $Q$, while the second gives the corresponding bound at $\delta=0$. The proof in \secref{ss_proof_weighted_gram} first establishes a general diagonal-Gram estimate that also serves for the LASSO.

\begin{corollary}[Uniform bound for weighted Gram systems]\label{coro_scaled_gram_weighted}
Let $A\in\mbR^{m\times n}$. For every diagonal matrix $Q$ with diagonal in $\mathbb{R}^m_+$ and every $\delta>0$,
\[
    \left\|(\delta I_n+A\tp QA)^{-1}A\tp Q\right\|\leq\chi(A)
    \qquad\text{and}\qquad
    \left\|(\sqrt{Q}A)^{\dagger}\sqrt{Q}\right\|\leq\chi(A).
\]
\end{corollary}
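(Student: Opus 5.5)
The plan is to reduce the regularized estimate to a weighted least-squares problem with a full-column-rank design, and then use the classical fact that a weighted least-squares solution is a convex combination of "basic" solutions (the Dikin/Stewart/Todd bound). Fix a diagonal $Q\succeq0$, $\delta>0$ and $b\in\mbR^m$. Set $x:=(\delta I_n+A\tp QA)^{-1}A\tp Qb$; it is the unique minimizer of $\|\sqrt{Q}(Ax-b)\|_2^2+\delta\|x\|_2^2$. Stack
\[
\tilde A:=\begin{pmatrix}A\\ I_n\end{pmatrix},\qquad D:=\diag(Q,\delta I_n),\qquad \tilde b:=\begin{pmatrix}b\\ 0\end{pmatrix},
\]
so that $x=(\tilde A\tp D\tilde A)^{-1}\tilde A\tp D\tilde b$. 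Here $\tilde A$ has full column rank and $\tilde A\tp D\tilde A\succ0$. Rows of $A$ with $q_i=0$ contribute nothing, so they can be deleted.

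The first step is the Cauchy--Binet representation. Expanding $\det(\tilde A\tp D\tilde A)$ and the adjugate by Cauchy--Binet (equivalently Cramer's rule) gives
\[
x=\sum_{B}\lambda_B\,\tilde A_B^{-1}\tilde b_B,\qquad \lambda_B=\frac{\det(D_B)\det(\tilde A_B)^2}{\sum_{B'}\det(D_{B'})\det(\tilde A_{B'})^2}.
\]
The sum runs over the $n$-row index sets $B$ for which $\tilde A_B$ is nonsingular and $D_B\succ0$. The weights are nonnegative and sum to one. I expect this step to be the main obstacle. I would prove it by verifying the normal equations $\tilde A\tp D(\tilde A x-\tilde b)=0$ coordinatewise through Cramer's rule, as in Stewart's and Todd's arguments. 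The alternative is to cite that classical identity directly.

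The second step analyses each basic solution. A row set $B$ consists of a set $R$ of rows of $A$ together with identity rows indexed by some $J\subset\{1,\dots,n\}$, where $|R|+|J|=n$. Put $C:=J^c$, so $|C|=|R|$. The identity rows force $y_j=0$ for $j\in J$, where $y:=\tilde A_B^{-1}\tilde b_B$. The remaining equations then read $A_{R,C}\,y_C=b_R$. Up to a permutation, $\tilde A_B$ is block triangular, so it is nonsingular exactly when $A_{R,C}$ is. If $R=\emptyset$, then $y=0$. Otherwise $\|y\|=\|A_{R,C}^{-1}b_R\|\le\chi(A)\|b\|$ by the definition \eqref{eq:chi_A}. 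Since $x$ is a convex combination of such vectors, the triangle inequality gives $\|x\|\le\chi(A)\|b\|$, and taking the supremum over $b$ yields the first estimate. The case $A=0$ is trivial, since then $x=0$.

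For the second estimate, set $B_Q:=\sqrt{Q}A$, so that $A\tp Q=B_Q\tp\sqrt{Q}$. Then $(\delta I_n+A\tp QA)^{-1}A\tp Q=(\delta I_n+B_Q\tp B_Q)^{-1}B_Q\tp\sqrt{Q}$. The standard Tikhonov limit $(\delta I+B\tp B)^{-1}B\tp\to B^{\dagger}$ as $\delta\to0^+$ follows from the SVD, since each nonzero singular value $\sigma$ gives the factor $\sigma/(\sigma^2+\delta)\to1/\sigma$. Hence $(\delta I_n+A\tp QA)^{-1}A\tp Q\to(\sqrt{Q}A)^{\dagger}\sqrt{Q}$. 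The bound $\chi(A)$ holds uniformly in $\delta>0$ and passes to the limit by continuity of the norm.
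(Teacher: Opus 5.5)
Your proof is correct. It differs from the paper's mainly in how the convex-combination (barycentric) representation is obtained.

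\textbf{The paper's route.} The paper first proves Lemma~\ref{lem_scaled_gram_bound} for $(D+B\tp B)^{-1}B\tp$ in a dual form. Via the push-through identity, it writes the vector as the first block of the weighted minimal-norm solution of the underdetermined system $(B\ I_r)z=v$. It then parametrizes the feasible set by a kernel basis $N$, applies \citet{ben-tal_teboulle_1990} to the least-squares problem in $\beta$, and checks that the resulting points are basic solutions of $(B\ I_r)$. For the corollary it sets $B=\sqrt{Q}A$. The lemma's statement would then only give $\chi(\sqrt{Q}A)$, so the paper has to reopen that proof to see that the row scalings $\sqrt{Q_{K^c}}$ cancel in each basic solution.

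\textbf{Your route.} You work in primal form. The Tikhonov solution is itself the weighted least-squares solution of the stacked full-column-rank system $(A;I_n)x\approx(b;0)$ with weights $\diag(Q,\delta I_n)$. The Dikin/Ben-Tal--Teboulle formula therefore applies directly, with explicit weights $\det(D_B)\det(\tilde A_B)^2$. You can derive these weights by applying Cramer's rule to the normal equations and expanding numerator and denominator with Cauchy--Binet. Since $Q$ enters only through these weights, the basic solutions $A_{R,C}^{-1}b_R$ (padded with zeros) do not depend on $Q$, and no cancellation step is needed. Both proofs end with the same family of basic solutions: your $(R,C)$ plays the role of the paper's $(K^c,\mI_1)$.

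\textbf{What each buys.} Your route is shorter and more self-contained. Replacing $\delta I_n$ by any positive diagonal matrix in the weights gives $\|(D+A\tp QA)^{-1}A\tp Q\|\le\chi(A)$ directly. This covers both Lemma~\ref{lem_scaled_gram_bound} (used for the LASSO) and the corollary at once. The paper's dual formulation matches the weighted-pseudoinverse setting $\Omega A_0\tp(A_0\Omega A_0\tp)^{-1}$ of the literature cited in \remref{rem_scaled_pinv}. The final step, letting $\delta\downarrow0$ through the SVD to get the pseudoinverse bound, is the same in both.
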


Set $B_{\varepsilon}:=\sqrt{Q_{\varepsilon}}A$ and $W_{\varepsilon}:=-B_{\varepsilon}^{\dagger}\sqrt{Q_{\varepsilon}}$. Since $B_{\varepsilon}\tp B_{\varepsilon}B_{\varepsilon}^{\dagger}=B_{\varepsilon}\tp$,
\[
    H_{\varepsilon}W_{\varepsilon}
    =-B_{\varepsilon}\tp B_{\varepsilon}B_{\varepsilon}^{\dagger}\sqrt{Q_{\varepsilon}}
    =-B_{\varepsilon}\tp\sqrt{Q_{\varepsilon}}
    =M_{\varepsilon}.
\]
The second estimate in \corref{coro_scaled_gram_weighted} gives then $\|W_{\varepsilon}\|=\| (\sqrt{Q_{\varepsilon}}A)^{\dagger}\sqrt{Q_{\varepsilon}}\|\leq\chi(A)$.
Thus both the range factorization and its bound are uniform over $\theta$, $\varepsilon$, and every admissible $Q_{\varepsilon}$. This is summarized in the next corollary, whose proof is given in \secref{ss_proof_huber_pathdiff}.

\begin{corollary}[Huber regression]\label{coro_Huber_app}
For every fixed $A\in\mbR^{m\times n}$, Assumptions~\ref{assum:f_reg}, \ref{assum:f_def} and~\ref{assum:range} hold for \eqref{H} with $\Theta=O=\mbR^m$ and any $\bar\eps>0$; the range factors satisfy the global bound $\|W_{\varepsilon}\|\leq\chi(A)$. Consequently, $\minsol\colon\mbR^m\to\mbR^n$ is path-differentiable, and $J_{\minsol}$ defined in \eqref{eq:J_min} is a definable conservative Jacobian.
\end{corollary}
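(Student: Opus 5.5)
The plan is to check the four hypotheses of \thmref{thm_general}, namely \ref{assum:f_reg}, \ref{assum:f_def} and \ref{assum:range} on $O=\mbR^m$, and then invoke that theorem. By \thmref{thm_loc_bound_deriv_imply_cont}, conditions $(iii)$ and $(iv)$ of \thmref{thm_Schechtman} then follow, and the conclusion is exactly path-differentiability of $\minsol$ together with definability and conservativity of $J_{\minsol}$.

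\emph{Regularity and definability.} For \ref{assum:f_reg}, $f(x,\theta)=\phi(Ax-\theta)$ is convex in $x$ as a convex function composed with an affine map. Its gradient is $F(x,\theta)=A\tp\nabla\phi(Ax-\theta)$, and $\nabla\phi$ is the componentwise clip $t\mapsto\max(-1,\min(1,t))$, which is $1$-Lipschitz. Hence $F$ is globally Lipschitz jointly in $(x,\theta)$, with constant at most $\|A\|\max(\|A\|,1)$, and $f(\cdot,\theta)\in\mC^{1,1}$.

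Nonemptiness of $\mS(\theta)$ needs an argument, since $f(\cdot,\theta)$ is not coercive when $\ker A\neq\{0\}$. I would argue as follows. Since $h$ is coercive, $\phi$ is coercive on $\mbR^m$. So $\phi(y-\theta)$ attains its minimum over the closed set $\range(A)$: take a minimizing sequence, which is bounded by coercivity, and use closedness of $\range(A)$. Any preimage $x$ of that minimizer then lies in $\mS(\theta)$.

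For \ref{assum:f_def}, $h$ is piecewise polynomial, so $f$ is semialgebraic. The set $\Theta=\mbR^m$ is trivially definable.

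\emph{Conservative Jacobian and range condition.} Next I check that the proposed $J_F$ is a definable conservative Jacobian. First, $\partial(\nabla\phi)(y)$ is the Clarke Jacobian of a Lipschitz semialgebraic map, hence conservative; the displayed product form of $q$ follows from separability. The chain rule of conservative calculus, applied to $(x,\theta)\mapsto Ax-\theta$ followed by $y\mapsto A\tp\nabla\phi(y)$, then gives exactly the blocks $(A\tp QA\ \ {-A\tp Q})$. This set-valued map is semialgebraic, since its graph is described by finitely many polynomial (in)equalities.

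For \ref{assum:range}, each $H_\eps=A\tp Q_\eps A$ is symmetric PSD because $Q_\eps$ is diagonal with entries in $[0,1]$. The computation preceding the statement already gives $M_\eps=H_\eps W_\eps$ with $W_\eps=-B_\eps^\dagger\sqrt{Q_\eps}$. It relies only on the pseudoinverse identity $B\tp BB^\dagger=B\tp$ from \lemref{lem_pseudoinverse_properties}, so it holds for every admissible $Q_\eps$, every $\theta$ and every $\eps>0$. The second estimate of \corref{coro_scaled_gram_weighted} bounds $\|W_\eps\|\le\chi(A)$ uniformly. So $C_K=\chi(A)$ works for every compact $K$ and any $\bar\eps$; if $\chi(A)=0$ one takes any positive constant.

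\emph{Conclusion and expected obstacle.} With all assumptions verified, \thmref{thm_general} yields the claim. The main point of care is not the range bound, which is handed to us by \corref{coro_scaled_gram_weighted}. It is rather the nonemptiness of $\mS(\theta)$ in the noncoercive case, together with the justification that $J_F$ built from $\partial(\nabla\phi)$ is a genuine conservative Jacobian. For the latter, I would rely on the fact that the Clarke Jacobian of a definable locally Lipschitz map is conservative, as cited earlier in the paper, combined with the conservative chain rule of \citet{bolte_pauwels_2019}.
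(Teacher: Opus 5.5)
Your proposal is correct and follows the paper's proof essentially step by step. Nonemptiness comes from coercivity of $\phi$ on the closed subspace $\range(A)$, then the global Lipschitz bound on $F$, semialgebraicity, and the range factor $W_\varepsilon=-B_\varepsilon^\dagger\sqrt{Q_\varepsilon}$ bounded by $\chi(A)$ via \corref{coro_scaled_gram_weighted}, before invoking \thmref{thm_general}. Your extra points — conservativity of $J_F$ via the chain rule, and the degenerate case $\chi(A)=0$, where Assumption~\ref{assum:range} asks for $C_K>0$ — spell out details the paper leaves implicit.
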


For $\theta\in\mbR^m$, define the reachable Huber matrices by
\begin{equation}\label{eq:huber_matrices}
    \mR^{\mathrm{Hub}}(\theta):=\left\{Q\in\partial(\nabla\phi)(A\minsol(\theta)-\theta):
    \begin{array}{l}
        \exists\ \varepsilon_k\downarrow0,\ Q_{\varepsilon_k}\in\partial(\nabla\phi)(Ax_{\varepsilon_k}^{\star}(\theta)-\theta),\\
        Q_{\varepsilon_k}\to Q
    \end{array}\right\}.
\end{equation}
These are precisely the matrices that can be obtained by keeping $\theta$ fixed and following the regularized solutions.

\begin{proposition}[Limiting derivatives for Huber regression]\label{prop_huber_some_limits}
For every $\theta\in\mbR^m$ and every $Q\in\mR^{\mathrm{Hub}}(\theta)$,
\[
    (A\tp QA)^{\dagger}A\tp Q
    =(\sqrt{Q}A)^{\dagger}\sqrt{Q}
    \in J_{\minsol}(\theta).
\]
\end{proposition}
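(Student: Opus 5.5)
The plan is to avoid the residual formula of \thmref{thm_formula_memory} altogether. I will show that along a suitable sequence the blocks can be taken \emph{constant} and equal to the limiting ones. After that, the Least-Squares computation of the motivating example applies verbatim.

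Fix $\theta$ and $Q=\diag(q)\in\mR^{\mathrm{Hub}}(\theta)$. By \eqref{eq:huber_matrices} there are $\varepsilon_k\downarrow0$ and $Q_{\varepsilon_k}=\diag(q^k)\in\partial(\nabla\phi)(y^k)$ with $Q_{\varepsilon_k}\to Q$, where $y^k:=Ax^\star_{\varepsilon_k}(\theta)-\theta$. The key step is to check that $Q$ itself belongs to $\partial(\nabla\phi)(y^k)$ for all $k$ large. This uses the product structure of the Clarke Jacobian of the separable $\nabla\phi$ described before \eqref{eq:chi_A}: the $i$-th diagonal entry ranges over $\{1\}$, $[0,1]$ or $\{0\}$ according as $|y^k_i|<1$, $=1$ or $>1$. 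I treat each index $i$ separately, using $q^k_i\to q_i$.
\begin{itemize}
\item If $q_i\in\,]0,1[$, then for large $k$ we have $q^k_i\notin\{0,1\}$. Hence $|y^k_i|=1$, and $q_i\in[0,1]$ is admissible.
\item If $q_i=0$, then for large $k$ we have $q^k_i\neq1$. Hence $|y^k_i|\geq1$, and the value $0$ is admissible.
\item If $q_i=1$, symmetrically $|y^k_i|\leq1$ for large $k$, and the value $1$ is admissible.
\end{itemize}
Since there are finitely many indices, there is $k_0$ such that $Q\in\partial(\nabla\phi)(y^k)$ for all $k\geq k_0$. Consequently $(A\tp QA\ \ {-A\tp Q})\in J_F(x^\star_{\varepsilon_k}(\theta),\theta)$ for those $k$.

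Now set $V_k:=(A\tp QA+\varepsilon_kI_n)^{-1}A\tp Q\in J_{\xs}(\theta,\varepsilon_k)$, using \eqref{eq:regularized_implicit_jacobian}. Write $B:=\sqrt{Q}A$, so that $V_k=(B\tp B+\varepsilon_kI_n)^{-1}B\tp\sqrt{Q}$. The blocks are now fixed, exactly as in the Least-Squares computation, so the spectral calculus on $B\tp B$ gives $(B\tp B+\varepsilon_k I_n)^{-1}B\tp B\to(B\tp B)^\dagger B\tp B$. Combined with $B\tp=B\tp BB^\dagger$ (\lemref{lem_pseudoinverse_properties}), this yields
\[
V_k\to(B\tp B)^\dagger B\tp B B^\dagger\sqrt Q=B^\dagger\sqrt Q .
\]
The same identities give $(A\tp QA)^\dagger A\tp Q=(B\tp B)^\dagger B\tp\sqrt Q=B^\dagger\sqrt Q$, which is the claimed equality of the two expressions.

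Finally, I take $\theta_k\equiv\theta$ in \eqref{eq:J_min}. By \corref{coro_Huber_app}, Assumption~\ref{assum:range} holds, so \thmref{thm_loc_bound_deriv_imply_cont} gives $x^\star(\theta,\varepsilon_k)\to\minsol(\theta)$. Therefore the limit $B^\dagger\sqrt Q$ of the $V_k$ lies in $J_{\minsol}(\theta)$.

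The main obstacle is the replacement step. The original selections $Q_{\varepsilon_k}$ may have entries that vanish slowly compared to $\varepsilon_k$; along those selections a nonzero residual $S$ could survive, and the pseudoinverse formula might fail. The case analysis above sidesteps this by freezing the blocks at their limit $Q$, which is legitimate precisely because the admissible sets are products of intervals whose pattern stabilizes for large $k$.
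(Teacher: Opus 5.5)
Your proposal is correct and follows the paper's proof essentially step for step. It uses the same coordinatewise argument showing that the limit $Q$ itself is an admissible selection at $Ax^\star_{\varepsilon_k}(\theta)-\theta$ for large $k$, then the fixed-block limit $(B\tp B+\varepsilon_k I_n)^{-1}B\tp\sqrt{Q}\to B^\dagger\sqrt{Q}$, and takes $\theta_k\equiv\theta$ in \eqref{eq:J_min}. The only cosmetic difference is that the paper gets $x^\star_{\varepsilon_k}(\theta)\to\minsol(\theta)$ directly from \thmref{thm_Browder}, which suffices here, rather than through \thmref{thm_loc_bound_deriv_imply_cont}.
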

The proof is given in \secref{ss_proof_huber_limits}. \propref{prop_huber_some_limits} thus gives the pseudoinverse element associated with every reachable $Q$. It does not identify every cluster point of the regularized derivatives because the entries of $Q_{\varepsilon}$ at the transition points of the Huber loss may vary with $\varepsilon$, and the eigenvalues of $A\tp Q_{\varepsilon}A$ may enter the critical or slow regimes described in subsection~\ref{sss_conditions}.

\section{Composite Problems}\label{s_involved_pbs}
We move on to composite problems involving sums of parametric functions. Parallel to our presentation of \secref{s_C11}, we first introduce the notation and regularized implicit-differentiation calculus in the general setting. We then specialize it to the LASSO, whose uniform derivative bound will motivate the reduced range conditions used in the general results. Finally, we give some comments on potential convergence rates. 

Let $O\subset\Theta$ be open and definable, and consider two functions $f:\mbR^n\times O\to\mbR$ and $g:\mbR^n\times O\to\mbR\cup\{+\infty\}$. For every $\theta\in O$, consider
\begin{equation*}
    \tag{P-Comp}\label{pb_composite}
    \min_{x\in\mbR^n}\big\{f(x,\theta)+g(x,\theta)\big\}
\end{equation*}
under the following two assumptions.
\begin{enumerate}[label={\bf(D')}]
\item \label{assum:f_c} {[Composite definability]} The functions $f$ and $g$ as well as the set $\Theta$
are definable. This implies $\nabla_x f$ and $\mathrm{prox}_{\gamma g}$ for $\gamma > 0$ are definable, see \remref{rem_definability_composite}. 
\end{enumerate}
\begin{enumerate}[label={\bf(Reg')}]
\item \label{assum:reg_c} {[Composite regularity]} For every $\theta\in O$,
$f(\cdot,\theta)\in\mC^{1,1}(\mbR^n)$ and $f(\cdot,\theta)\in\Gamma_0(\mbR^n)$, $g(\cdot,\theta)\in\Gamma_0(\mbR^n)$, and the solution set $\mS_c(\theta):=\argmin_{x\in\mbR^n}\{f(x,\theta)+g(x,\theta)\}$ is nonempty. The map
$\nabla_x f:\mbR^n\times O\to\mbR^n$ is jointly locally Lipschitz and, for
some $L\geq0$,
\[
    \|\nabla_x f(x,\theta)-\nabla_x f(x',\theta)\|
    \leq L\|x-x'\|
    \qquad
    \text{for all }x,x'\in\mbR^n\text{ and }\theta\in O.
\]
Fix $\bar\eps>0$ and a positive stepsize $\gamma<2/(L+2\bar\eps)$. For this $\gamma$, the parameterized proximal map
\[
    G^\gamma:\mbR^n\times O\to\mbR^n,
    \qquad
    G^\gamma(z,\theta):=\prox_{\gamma g(\cdot,\theta)}(z),
\]
is jointly locally Lipschitz.
\end{enumerate}
\begin{remark}[Definability of $\nabla_x f$ and $G^\gamma$] \label{rem_definability_composite}
The graphs of $\nabla_x f$ and $G^\gamma$ admit first-order descriptions in terms of $f$ and $g$. By closure of definable sets under first-order quantification, or equivalently under Boolean operations and coordinate projection, both maps are definable; in the semialgebraic case, this is the Tarski--Seidenberg theorem (see \secref{ss_definability_intro} and \citet{coste1999introduction, vandendries_miller_1996}). Their local Lipschitz continuity in \ref{assum:reg_c} therefore makes their joint Clarke Jacobians conservative Jacobians.
\end{remark}

Under \ref{assum:f_c}, $\mS_c(\theta)$ is a nonempty closed convex set. We
denote its minimal-norm element by
\[
    \minsol(\theta)
    :=\argmin_{x\in\mS_c(\theta)}\frac12\|x\|_2^2.
\]
For $0<\varepsilon\leq\bar\eps$, define the Tikhonov-regularized solution
\begin{equation*}
    \tag{$\mbox{P-Comp}_\eps$}\label{pb_composite_epsilon}
    \xsepst
    :=\argmin_{x\in\mbR^n}
    \left\{f(x,\theta)+g(x,\theta)
    +\frac{\varepsilon}{2}\|x\|_2^2\right\}.
\end{equation*}
The regularized objective is $\varepsilon$-strongly convex, so this solution is
single-valued. For the fixed $\gamma$ in \ref{assum:reg_c}, define the forward
step, the forward--backward operator, and its fixed-point residual by
\begin{equation}\label{composite_fixed_point}
\begin{aligned}
    z_\varepsilon(x,\theta)
    &:=x-\gamma\big(\nabla_x f(x,\theta)+\varepsilon x\big),\\
    \varphi_\varepsilon^\gamma(x,\theta)
    &:=G^\gamma\big(z_\varepsilon(x,\theta),\theta\big),
    &\qquad
    \Phi_\varepsilon^\gamma(x,\theta)
    &:=x-\varphi_\varepsilon^\gamma(x,\theta).
\end{aligned}
\end{equation}
Fermat's rule and the standard proximal characterization
\citep[see][]{bauschke_combettes_2017} give
\[
    x\text{ solves \eqref{pb_composite_epsilon}}
    \quad\Longleftrightarrow\quad
    x=\varphi_\varepsilon^\gamma(x,\theta)
    \quad\Longleftrightarrow\quad
    \Phi_\varepsilon^\gamma(x,\theta)=0.
\]

We use the residual equation $\Phi_\varepsilon^\gamma(x,\theta)=0$ to
characterize the solutions whose dependence on $\theta$ we wish to
differentiate. Write the joint Clarke Jacobians blockwise as
\[
    (H^f\ M^f)\in J_{\nabla_x f}(x,\theta)
    :=\partial(\nabla_x f)(x,\theta),
    \qquad
    (Q\ P)\in J_{G^\gamma}(z,\theta)
    :=\partial G^\gamma(z,\theta),
\]
where $H^f,Q\in\mbR^{n\times n}$ and $M^f,P\in\mbR^{n\times m}$. To keep
the $x$- and $\theta$-blocks from each joint Clarke selection paired, set
\[
    \mD_\varepsilon(x,\theta)
    :=\left\{
    \begin{array}{l|l}
    D=(Q,P,H^f,M^f)
    & (Q\ P)\in J_{G^\gamma}(z_\varepsilon(x,\theta),\theta),\\[-2pt]
    & (H^f\ M^f)\in J_{\nabla_x f}(x,\theta)
    \end{array}
    \right\}.
\]
For $D\in\mD_\varepsilon(x,\theta)$, define
\begin{equation}\label{composite_sets_matrices}
\begin{aligned}
    \Heps(D)
    &:=I_n-Q\big((1-\gamma\varepsilon)I_n-\gamma H^f\big),\\
    \Meps(D)
    &:=\gamma QM^f-P.
\end{aligned}
\end{equation}
The chain rule applied to these conservative mappings gives a conservative Jacobian of the residual
\begin{equation}\label{composite_constructed_field}
    J_{\Phi_\varepsilon^\gamma}(x,\theta)
    :=\big\{(\Heps(D)\ \Meps(D)):D\in\mD_\varepsilon(x,\theta)\big\}.
\end{equation}

\begin{proposition}[Conservative Jacobian of the regularized composite problem]
\label{prop_composite_regularized}
Assume \ref{assum:f_c} and \ref{assum:reg_c}. For every
$0<\varepsilon\leq\bar\eps$, the map $\theta\mapsto\xsepst$ is
path-differentiable on $O$, with the definable conservative Jacobian
\begin{equation}\label{composite_regularized_jacobian}
    J_{\xseps}(\theta)
    :=\left\{
    -\Heps(D)^{-1}\Meps(D):
    D\in\mD_\varepsilon(\xsepst,\theta)
    \right\}.
\end{equation}
\end{proposition}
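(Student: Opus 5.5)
The plan is to apply the path-differentiable IFT (\thmref{thm_IFT_other}) to the residual $\Phi_\varepsilon^\gamma$ with implicit function $G(\theta):=\xsepst$. Three things are needed: (a) $\theta\mapsto\xsepst$ is locally Lipschitz on $O$; (b) \eqref{composite_constructed_field} is a conservative Jacobian of $\Phi_\varepsilon^\gamma$; (c) every $\Heps(D)$ with $D\in\mD_\varepsilon(\xsepst,\theta)$ is invertible. Definability of the resulting field should follow from definability of $\nabla_x f$, $G^\gamma$, their Clarke Jacobians and $\theta\mapsto\xsepst$ (\remref{rem_definability_composite}). Since $\mD_\varepsilon$ is a first-order description, stability of definable sets under projection and products then handles it.

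For (b), I would note that $z_\varepsilon$ is jointly locally Lipschitz. Its conservative Jacobian is $\{((1-\gamma\varepsilon)I_n-\gamma H^f\ \ {-\gamma M^f})\}$, obtained from $J_{\nabla_x f}=\partial(\nabla_x f)$, which is conservative by local Lipschitzness and definability. Then $\varphi_\varepsilon^\gamma(x,\theta)=G^\gamma(z_\varepsilon(x,\theta),\theta)$ is a composition with $(x,\theta)\mapsto(z_\varepsilon,\theta)$. The chain rule for conservative Jacobians \citep{bolte_pauwels_2019} gives the blocks $Q((1-\gamma\varepsilon)I-\gamma H^f)$ and $-\gamma QM^f+P$. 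Subtracting from $(I\ 0)$ yields exactly $(\Heps(D)\ \Meps(D))$.

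For (c), I would use $\gamma<2/(L+2\bar\eps)$. By \citep[Theorem on Clarke Jacobian of prox]{patrinos_stella_bemporad_2014}, as recalled in the introduction, $Q$ is symmetric with $0\preceq Q\preceq I$. Since $f(\cdot,\theta)$ is convex with $L$-Lipschitz gradient, $H^f$ is symmetric (in the Clarke sense) with $0\preceq H^f\preceq LI$. Then $T:=(1-\gamma\varepsilon)I-\gamma H^f$ is symmetric with spectrum in $[1-\gamma\varepsilon-\gamma L,\,1-\gamma\varepsilon]$. Because $\gamma(L+2\varepsilon)<2$, this interval lies in $]-1+\gamma\varepsilon,1-\gamma\varepsilon]$, so $\|T\|\leq\max(1-\gamma\varepsilon,\gamma L+\gamma\varepsilon-1)<1$. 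The case $\gamma\varepsilon\geq1$ is excluded since then $\gamma\varepsilon<1$ anyway from $\gamma<2/(2\bar\eps)$ when $L\geq 0$ gives $\gamma\varepsilon<1$. Hence $\|QT\|\leq\|Q\|\|T\|<1$, and $\Heps(D)=I-QT$ is invertible by a Neumann series. The step I expect to be most delicate is justifying the symmetry and PSD bounds for Clarke Jacobian elements of $\nabla_x f(\cdot,\theta)$ and of $\prox$. These are $x$-blocks of joint Clarke Jacobians, not partial ones. I would instead bound them via Lipschitz and monotonicity arguments that pass to convex hulls of limits. Nonexpansiveness of $G^\gamma(\cdot,\theta)$ gives $\|Q\|\leq1$ directly from limits of Jacobians, without symmetry. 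For $H^f$, I would argue that the limiting Jacobians $\tilde H$ satisfy $\|I-\tfrac{2}{L}\tilde H\|\leq 1$ (cocoercivity/Baillon--Haddad) along limits, which is preserved by convexification. Then $\|(1-\gamma\varepsilon)I-\gamma H^f\|<1$ follows by writing it as an affine combination: the contraction constant is available without symmetry.

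For (a), I would use strong monotonicity. With $\theta,\eta\in K$ compact, subtract the optimality inclusions $-\nabla_x f(x_\theta,\theta)-\varepsilon x_\theta\in\partial g(x_\theta,\theta)$, or more simply use the fixed-point form. The map $\varphi_\varepsilon^\gamma(\cdot,\theta)$ is a contraction with constant $\|T\|<1$ by the same bound, since $\prox$ is nonexpansive and $z_\varepsilon(\cdot,\theta)$ is $\|T\|$-Lipschitz under cocoercivity. It is also locally Lipschitz in $\theta$, uniformly on bounded $x$-sets, by joint local Lipschitzness of $\nabla_xf$ and $G^\gamma$. The standard parametric contraction estimate $\|x_\theta-x_\eta\|\leq(1-\|T\|)^{-1}\|\varphi(x_\eta,\theta)-\varphi(x_\eta,\eta)\|$ then gives local Lipschitzness, after checking that $\xsepst$ stays bounded on $K$ (continuity from the same estimate). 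Finally, \thmref{thm_IFT_other} yields \eqref{composite_regularized_jacobian}.
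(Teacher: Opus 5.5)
Your proof is correct, but it follows a different route from the paper. The paper's proof is essentially a citation. It checks $0<\gamma<2/(L+2\bar\eps)\le 2/(L+\varepsilon)$ and gets definability from \lemref{lem:def_clarke_jac}. It then invokes Theorem~3.5 and Remark~3.6 of \citet{bolte_pauwels_silveti-falls_2024}, asserting without detail that the stepsize argument there carries over when the strong convexity sits in the smooth term.

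You instead rebuild that result directly from \thmref{thm_IFT_other}:
\begin{enumerate}
\item The chain rule produces exactly $(\Heps(D)\ \Meps(D))$.
\item The stepsize condition gives $\|(1-\gamma\varepsilon)I_n-\gamma H^f\|\le 1-\gamma\varepsilon$. The spectrum lies in $[1-\gamma\varepsilon-\gamma L,\,1-\gamma\varepsilon]$, with $\gamma(L+2\varepsilon)<2$ and $\gamma\varepsilon<1$.
\item Since $\|Q\|\le1$, every $\Heps(D)$ is invertible by a Neumann series.
\item The same modulus makes $\varphi^\gamma_\varepsilon(\cdot,\theta)$ a $(1-\gamma\varepsilon)$-contraction. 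Your parametric contraction estimate then gives local Lipschitz continuity of $\theta\mapsto\xsepst$, replacing the convexification and Clarke-IFT existence step used in the cited work.
\end{enumerate}
Your route is more transparent: it shows exactly why the factor $2\bar\eps$ appears in the stepsize bound, and it does not depend on a theorem stated for a different splitting carrying over. The paper's route buys brevity.

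Three small remarks:
\begin{itemize}
\item Your worry about joint versus partial blocks is unnecessary for $H^f$. At points of joint differentiability, the $x$-block is a genuine Hessian of $f(\cdot,\theta)$, hence symmetric with $0\preceq H^f\preceq LI_n$, and this survives limits and convex hulls. This is how the paper argues in \secref{ss_decomposition_general}.
\item Your cocoercivity alternative also works, via $(1-\gamma\varepsilon)I_n-\gamma H^f=(1-\gamma\varepsilon-\tfrac{\gamma L}{2})I_n+\tfrac{\gamma L}{2}(I_n-\tfrac{2}{L}H^f)$, whose first coefficient is nonnegative. It needs a separate trivial line when $L=0$.
\item The sentence about excluding $\gamma\varepsilon\ge1$ is garbled. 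Simply note that $\gamma<2/(L+2\bar\eps)\le 1/\bar\eps$ forces $\gamma\varepsilon<1$.
\end{itemize}
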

The proof is given in \secref{ss_proof_composite_regularized} and is a
straightforward application of
\citet[Theorem~3.5 and Remark~3.6]{bolte_pauwels_silveti-falls_2024}.

\subsection{LASSO}\label{ss_LASSO}

Fix a matrix $A\in\mbR^{m\times n}$ and a penalty parameter $\lambda\in\mbR$, and regard the observation vector $\theta\in\mbR^m$ as the parameter. For every $\theta$, consider the LASSO problem \citep{tibshirani_1996, tibshirani_2011, osborne_presnell_turlach_2000, tibshirani_2013}
\begin{equation*}\tag{LASSO}\label{LASSO}
    \min_{x\in\mbR^n}
    \left\{
        \frac{1}{2}\|Ax-\theta\|_2^2
        + e^\lambda\|x\|_1
    \right\}.
\end{equation*}
This is an instance of \eqref{pb_composite} with $O=\mbR^m$, $f(x,\theta)=\frac12\|Ax-\theta\|_2^2$, and $g(x,\theta)=e^\lambda\|x\|_1$. Both functions are semialgebraic, hence definable, and their $x$-sections belong to $\Gamma_0(\mbR^n)$. Since $e^\lambda>0$, the LASSO objective is coercive and therefore attains its minimum. Thus \ref{assum:f_c} holds.

For $0<\varepsilon\leq\bar\eps$, its Tikhonov regularization, also called elastic net \citep{zou_hastie_2005,mairal_bach_ponce_2011}, is
\begin{equation*}\tag{$\mbox{LASSO}_{\eps}$}\label{LASSO_eps}
    \xsepst
    :=
    \argmin_{x\in\mbR^n}
    \left\{
        \frac{1}{2}\|Ax-\theta\|_2^2
        + e^\lambda\|x\|_1
        + \frac{\varepsilon}{2}\|x\|_2^2
    \right\}.
\end{equation*}
The gradient $\nabla_x f(x,\theta)=A\tp(Ax-\theta)$ is jointly affine and is
$\|A\|^2$-Lipschitz in $x$, while
$G^\gamma=\prox_{\gamma e^\lambda\|\cdot\|_1}$ is the soft-thresholding
operator and is jointly globally Lipschitz. Thus \ref{assum:reg_c} also holds,
and the fixed stepsize chosen above must satisfy
$0<\gamma<2/(\|A\|^2+2\bar\eps)$. Substituting the blocks
$H^f=A\tp A$, $M^f=-A\tp$, and $P=0$ into
\propref{prop_composite_regularized} gives the following formula.

\begin{corollary}[Conservative Jacobian of the regularized LASSO]
\label{coro_LASSO_setup}
For every $0<\varepsilon\leq\bar\eps$, the map
$\theta\mapsto\xsepst$ is path-differentiable, with conservative
Jacobian
\begin{equation}\label{LASSO_Jxseps}
    J_{\xseps}(\theta)
    :=
    \left\{
        \left(I_n-Q\big(I_n-\gamma(A\tp A+\varepsilon I_n)\big)\right)^{-1}
        \gamma QA\tp:
        Q\in\partial\prox_{\gamma e^\lambda\|\cdot\|_1}
        \big(z_{\varepsilon}(\xsepst,\theta)\big)
    \right\}.
\end{equation}
\end{corollary}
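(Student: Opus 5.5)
This is a direct specialization of \propref{prop_composite_regularized}, so the plan is to verify its hypotheses for \eqref{LASSO}, compute the blocks of \eqref{composite_sets_matrices}, and simplify.

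\textbf{Step 1: hypotheses.} Assumption~\ref{assum:f_c} was already checked above, since $f$ and $g$ are semialgebraic. For \ref{assum:reg_c}, $\nabla_x f(x,\theta)=A\tp(Ax-\theta)$ is jointly affine, hence jointly locally Lipschitz, and it is $L$-Lipschitz in $x$ uniformly in $\theta$ with $L=\|A\|^2$. The map $G^\gamma(z,\theta)=\prox_{\gamma e^\lambda\|\cdot\|_1}(z)$ does not depend on $\theta$. It is the componentwise soft-thresholding, which is $1$-Lipschitz (firm nonexpansiveness of proximal maps), hence jointly Lipschitz. With $\gamma<2/(\|A\|^2+2\bar\eps)$, \propref{prop_composite_regularized} applies and gives the conservative Jacobian \eqref{composite_regularized_jacobian}.

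\textbf{Step 2: blocks.} Because $\nabla_x f$ is affine, $J_{\nabla_x f}(x,\theta)=\partial(\nabla_x f)(x,\theta)=\{(A\tp A\ {-A\tp})\}$ is a singleton. Hence $H^f=A\tp A$ and $M^f=-A\tp$.

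Because $G^\gamma$ is independent of $\theta$, every element of its joint Clarke Jacobian has the form $(Q\ 0)$ with $Q\in\partial\prox_{\gamma e^\lambda\|\cdot\|_1}(z)$. This is the one point needing some care. The Clarke Jacobian of $(z,\theta)\mapsto p(z)$ is the convex hull of limits of $(\mathrm{Jac}\,p(z_k)\ 0)$ over points of differentiability. Those points are exactly $\mathrm{Diff}(p)\times\mbR^m$, so the joint Clarke Jacobian equals $\partial p(z)\times\{0\}$, and therefore $P=0$.

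Consequently $\mD_\varepsilon(\xsepst,\theta)$ is parametrized by $Q\in\partial\prox_{\gamma e^\lambda\|\cdot\|_1}(z_\varepsilon(\xsepst,\theta))$ alone.

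\textbf{Step 3: substitution.} Plugging these blocks into \eqref{composite_sets_matrices} gives
\[
\Heps=I_n-Q\big((1-\gamma\varepsilon)I_n-\gamma A\tp A\big)=I_n-Q\big(I_n-\gamma(A\tp A+\varepsilon I_n)\big),
\]
\[
\Meps=\gamma Q(-A\tp)-0=-\gamma QA\tp.
\]
Therefore $-\Heps^{-1}\Meps=\big(I_n-Q(I_n-\gamma(A\tp A+\varepsilon I_n))\big)^{-1}\gamma QA\tp$, which is exactly \eqref{LASSO_Jxseps}.

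Invertibility of $\Heps$ is already guaranteed inside \propref{prop_composite_regularized}. It can also be seen directly. $Q$ is diagonal with entries in $[0,1]$, and for the stated $\gamma$ the matrix $I_n-\gamma(A\tp A+\varepsilon I_n)$ is symmetric with spectrum in $]-1,1-\gamma\varepsilon]$. Hence the product $Q\big(I_n-\gamma(A\tp A+\varepsilon I_n)\big)$ is similar (after restricting to the support of $Q$) to a symmetric matrix with norm below $1$, so $1$ is not one of its eigenvalues.

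The main obstacle is only the justification that $P=0$ in the joint Clarke Jacobian. Everything else is direct substitution.
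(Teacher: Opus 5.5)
Your proposal is correct and follows the paper's argument. You verify \ref{assum:f_c} and \ref{assum:reg_c} for the LASSO, substitute $H^f=A\tp A$, $M^f=-A\tp$ and $P=0$ into \propref{prop_composite_regularized}, and simplify. Your justification that the joint Clarke Jacobian of the $\theta$-independent soft-thresholding map has a zero $\theta$-block, and your direct check that $\Heps$ is invertible, make explicit details the paper leaves implicit.
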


The coordinatewise formula for the Clarke Jacobian of the soft-thresholding operator $\prox_{\gamma e^{\lambda}\|\cdot\|_1}$ is 
\[
\partial\prox_{\gamma e^\lambda\|\cdot\|_1}(z)
=\left\{\diag(q):q\in\mbR^n,\quad
q_i\in
\begin{cases}
\{1\}, & \mathrm{if}\ |z_i|>\gamma e^\lambda,\\
[0,1], & \mathrm{if}\ |z_i|=\gamma e^\lambda,\\
\{0\}, & \mathrm{if}\ |z_i|<\gamma e^\lambda,
\end{cases}
 \forall\ i\in[1:n]\right\}.
\]
This formula follows from the separability of the proximal map; the corresponding calculation is written in \corref{coro_CF_l1}. Along the regularized trajectory, an admissible selection
\[
    \Qeps\in
    \partial\prox_{\gamma e^\lambda\|\cdot\|_1}
    \big(z_\varepsilon(\xsepst,\theta)\big)
\]
may vary with $\varepsilon$. In particular, neither its support nor the positive diagonal entries in $\Qeps$ need remain fixed as $\varepsilon\downarrow0$. This is not an issue if we can show an estimate uniform over all admissible selections. The following proposition proves the stronger bound over the entire set $\diag([0,1]^n)$ which always contains the admissible matrices $\Qeps$.
\begin{proposition}
    [LASSO uniform derivative bound]\label{prop_LASSO_unif_bound} With $\chi(A)$ defined in \eqref{eq:chi_A}, for every $0<\varepsilon\leq\bar\eps$ and every $Q \in \diag([0, 1]^n)$,
    \[\left\|\left(I_n - Q(I_n - \gamma (A\tp A + \varepsilon I_n))\right)^{-1}\gamma Q A\tp\right\| \leq \chi(A).\]
    In particular, condition $(iv)$ of \thmref{thm_Schechtman} holds for the LASSO with the bound $C_K = \chi(A)$, for every compact set $K$.
\end{proposition}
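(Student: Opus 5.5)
The plan is to reduce the bound to the weighted Gram estimate of \corref{coro_scaled_gram_weighted}. We do this by eliminating the coordinates where $Q$ vanishes and rescaling the remaining rows. Fix $Q=\diag(q)$ with $q\in[0,1]^n$ and let $S:=\{i: q_i>0\}$. If $S=\emptyset$ the matrix $\gamma QA\tp$ is zero. The matrix $T:=I_n-Q(I_n-\gamma(A\tp A+\varepsilon I_n))$ is then the identity, so the bound is trivial. Otherwise, consider the linear system $Tx=\gamma QA\tp b$ for arbitrary $b\in\mbR^m$. For $i\notin S$, row $i$ of $T$ is $e_i\tp$ and the right-hand side vanishes, so $x_i=0$.

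For $i\in S$, we divide row $i$ by $\gamma q_i>0$. Using $x_{S^c}=0$ and writing $A_S$ for the columns of $A$ indexed by $S$, the system becomes
\[
    (D+A_S\tp A_S)\,x_S=A_S\tp b,
    \qquad
    D:=\diag\Big(\tfrac{1-q_i}{\gamma q_i}+\varepsilon\Big)_{i\in S}.
\]
Since $q_i\le 1$ and $\varepsilon>0$, $D$ is positive definite diagonal, so $D+A_S\tp A_S$ is invertible. This reduction shows that $T$ is invertible: any solution of $Tx=0$ has $x_{S^c}=0$ and $x_S=0$. It also shows that $T^{-1}\gamma QA\tp$ has zero rows outside $S$ and equals $(D+A_S\tp A_S)^{-1}A_S\tp$ on $S$. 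Hence its operator norm equals $\|(D+A_S\tp A_S)^{-1}A_S\tp\|$.

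The key step is to bound this reduced norm by $\chi(A_S)$. The difficulty is that the diagonal weight $D$ acts on the column side, whereas \corref{coro_scaled_gram_weighted} has the weight inside the Gram product with a scalar shift. We handle this with the push-through identity
\[
    (D+A_S\tp A_S)^{-1}A_S\tp=D^{-1}A_S\tp\big(I_m+A_SD^{-1}A_S\tp\big)^{-1}.
\]
Its transpose is $(I_m+A_SD^{-1}A_S\tp)^{-1}A_SD^{-1}$. This is exactly the first estimate of \corref{coro_scaled_gram_weighted} applied to the matrix $A_S\tp\in\mbR^{|S|\times m}$ with $\delta=1$ and diagonal weight $D^{-1}$, which has positive diagonal entries. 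It therefore has norm at most $\chi(A_S\tp)$.

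Finally, we compare constants. Square submatrices of $A_S\tp$ are transposes of square submatrices of $A_S$, and transposition preserves invertibility and the spectral norm of the inverse, so $\chi(A_S\tp)=\chi(A_S)$. Every square submatrix of $A_S$ is a square submatrix of $A$, so $\chi(A_S)\le\chi(A)$. Since transposition preserves the operator norm, this gives the claimed bound.

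For the last sentence of the statement, every element of $J_{\xseps}(\theta)$ in \eqref{LASSO_Jxseps} has this form with some $Q\in\diag([0,1]^n)$. The bound $\chi(A)$ is thus uniform in $\theta$, $\varepsilon\in]0,\bar\eps]$ and the selection, which is exactly $(iv)$ with $C_K=\chi(A)$.

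I expect the main obstacle to be the push-through and rescaling step, namely matching the reduced system to the exact form covered by \corref{coro_scaled_gram_weighted}. The elimination of the indices outside $S$ is routine.
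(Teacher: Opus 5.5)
Your proof is correct and follows the paper's argument. You eliminate the coordinates where $Q$ vanishes, divide the active rows by $\gamma q_i$ to reach $(D+A_S\tp A_S)^{-1}A_S\tp$ with exactly the paper's diagonal $D=\diag\bigl(\tfrac{1-q_i}{\gamma q_i}+\varepsilon\bigr)_{i\in S}$, and bound the result by $\chi(A_S)\leq\chi(A)$; the one difference is that the paper applies \lemref{lem_scaled_gram_bound} directly with $B=A_S$, since that lemma already bounds $\|(D+B\tp B)^{-1}B\tp\|$ for any positive definite diagonal $D$, so your push-through and transposition detour through \corref{coro_scaled_gram_weighted} is valid but unnecessary (the lemma's own proof starts from the same push-through identity).
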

The proof is given in \secref{ss_proof_LASSO_uniform_bound}. Although the
residual $x$-block being inverted is generally nonsymmetric, the resulting
derivative matrix admits a symmetric reduction to the positive support of $Q$. If $Q=0$,
this matrix vanishes. Otherwise, set
\[
    S_*:=S_*(Q):=\{i:Q_{ii}>0\},
    \qquad
    Q_*:=Q_{S_*,S_*}\succ0.
\]
The rows outside $S_*$ vanish, while the restriction to the rows in $S_*$ is
\[
    \left(
        \frac{Q_*^{-1}-I_{|S_*|}}{\gamma}
        +\varepsilon I_{|S_*|}
        +A_{:,S_*}\tp A_{:,S_*}
    \right)^{-1}
    A_{:,S_*}\tp .
\]
The diagonal regularizer in this expression is positive definite, but as $Q$
and $\varepsilon$ vary, its smallest entry may tend to zero and its largest
entry may diverge. Nevertheless, \lemref{lem_scaled_gram_bound} bounds this
block by $\chi(A_{:,S_+})\leq\chi(A)$. Thus the estimate is uniform in the
positive support and the fractional entries of $Q$, as well as in
$\varepsilon$.

\subsection{Uniform Control in the General Case}\label{ss_decomposition_general}
We now return to the general composite problem. The LASSO calculation shows
that the $x$-blocks in \eqref{composite_constructed_field} need not be
symmetric, even though they arise from convex data. We therefore expose the
positive semidefinite system carried by $\range(Q)$ before formulating the
uniform assumptions.
The blocks inherit structure from the assumed convexity of the problem. By
\thmref{thm_CF_prox}, $Q$ is symmetric with
$0\preceq Q\preceq I_n$. At every point where $\nabla_x f$ is
differentiable, its $x$-block is a symmetric Hessian satisfying
$0\preceq\nabla_{xx}^2f\preceq LI_n$ by convexity and the $L$-Lipschitz
continuity of $\nabla_x f$. These properties are preserved under limits and
convex combinations, so every $H^f$ is symmetric with
$0\preceq H^f\preceq LI_n$. In particular,
$\Pi_Q:=QQ^\dagger=Q^\dagger Q$ is the orthogonal projection onto
$\range(Q)$, while $I_n-\Pi_Q$ is the orthogonal projection onto $\ker(Q)$.
For $D=(Q,P,H^f,M^f)$, define
\begin{equation}
    \label{composite_Sigma_V}
    \begin{aligned}
        \Sigma(D)
        &:=Q^\dagger-\Pi_Q+\gamma\Pi_QH^f\Pi_Q,\\
        V(D)
        &:=Q^\dagger\Meps(D)
          -\gamma\Pi_QH^f(I_n-\Pi_Q)\Meps(D),\\
        T_\varepsilon(D)
        &:=\Sigma(D)+\gamma\varepsilon\Pi_Q.
    \end{aligned}
\end{equation}
These matrices control the inversion of the $x$-blocks through the following
lemma, proven in \secref{ss_proof_projected_inversion}.
\begin{lemma}[Projected inversion]\label{lem_projected_inversion}
Let $\gamma,\varepsilon>0$, let $Q,H^f\in\mbR^{n\times n}$ be symmetric
with $0\preceq Q\preceq I_n$ and $H^f\succeq0$, and let
$M\in\mbR^{n\times m}$. Set
\[
\begin{aligned}
    H&:=I_n-Q\big((1-\gamma\varepsilon)I_n-\gamma H^f\big),
    &\Pi&:=QQ^\dagger,\\
    \Sigma&:=Q^\dagger-\Pi+\gamma\Pi H^f\Pi,
    &V&:=Q^\dagger M-\gamma\Pi H^f(I_n-\Pi)M,\\
    T_\varepsilon&:=\Sigma+\gamma\varepsilon\Pi.
\end{aligned}
\]
Then $\Sigma$ is symmetric positive semidefinite and
$\Pi\Sigma=\Sigma\Pi=\Sigma$. Moreover, $T_\varepsilon$ vanishes on
$\ker(Q)$ and is positive definite on $\range(Q)$, with
\[
    T_\varepsilon|_{\range(Q)}
    \succeq\gamma\varepsilon I_{\range(Q)}.
\]
The matrix $H$ is invertible, and
\begin{equation}\label{projected_inversion_components}
    (I_n-\Pi)H^{-1}M=(I_n-\Pi)M,
    \qquad
    \Pi H^{-1}M=T_\varepsilon^\dagger V.
\end{equation}
Consequently,
\[
    H^{-1}M=(I_n-\Pi)M+T_\varepsilon^\dagger V.
\]
If $\range(V)\subseteq\range(\Sigma)$, then
\begin{equation}\label{projected_inversion_bound}
    \|T_\varepsilon^\dagger V\|\leq\|\Sigma^\dagger V\|,
    \qquad
    \|H^{-1}M\|
    \leq\|(I_n-\Pi)M\|+\|\Sigma^\dagger V\|.
\end{equation}
In particular, if $V=\Sigma W$, then $\|\Sigma^\dagger V\|\leq\|W\|$,
so the two right-hand bounds in \eqref{projected_inversion_bound} may be
replaced by $\|W\|$ and $\|(I_n-\Pi)M\|+\|W\|$, respectively.
\end{lemma}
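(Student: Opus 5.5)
The plan is to solve $Hy=M$ blockwise along the orthogonal splitting $\mbR^n=\range(Q)\oplus\ker(Q)$, using $\Pi=QQ^\dagger=Q^\dagger Q$. First I will rewrite the $x$-block as
\[
H=I_n-Q+\gamma Q(\varepsilon I_n+H^f),
\]
and record the elementary facts used throughout. Since $Q$ is symmetric PSD, $Q^\dagger$ is symmetric and commutes with $\Pi$, and $(I_n-\Pi)Q=(I_n-\Pi)Q^\dagger=0$. Moreover, in an orthonormal eigenbasis of $Q$, the matrix $Q^\dagger-\Pi$ is diagonal with entries $1/q_i-1\geq0$ for $q_i\in]0,1]$ and $0$ on $\ker(Q)$.

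\textbf{Structure of $\Sigma$ and $T_\varepsilon$.} From the eigenbasis description, $Q^\dagger-\Pi$ is symmetric PSD and supported on $\range(Q)$. The term $\gamma\Pi H^f\Pi$ is symmetric PSD because $H^f\succeq0$. Hence $\Sigma$ is symmetric PSD and satisfies $\Pi\Sigma=\Sigma\Pi=\Sigma$. Adding $\gamma\varepsilon\Pi$ shows that $T_\varepsilon$ vanishes on $\ker(Q)$ and satisfies $T_\varepsilon|_{\range(Q)}\succeq\gamma\varepsilon I$. In particular, $T_\varepsilon^\dagger$ is the inverse of $T_\varepsilon$ on $\range(Q)$ and is zero on $\ker(Q)$.

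\textbf{Solving the system.} Let $y$ satisfy $Hy=M$, and write $u:=\Pi y$.
\begin{itemize}
\item Multiplying by $I_n-\Pi$ kills every term containing $Q$, so $(I_n-\Pi)y=(I_n-\Pi)M$.
\item Multiplying by $Q^\dagger$ and using $Q^\dagger Q=\Pi$ gives
\[
Q^\dagger y-\Pi y+\gamma\Pi(\varepsilon I_n+H^f)y=Q^\dagger M.
\]
Since $Q^\dagger y=Q^\dagger u$, substituting $y=u+(I_n-\Pi)M$ and moving the known term to the right yields $T_\varepsilon u=V$.
\end{itemize}
Both terms of $V$ lie in $\range(Q)$, and $T_\varepsilon$ is invertible on $\range(Q)$. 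Therefore $u=T_\varepsilon^\dagger V$, which gives \eqref{projected_inversion_components} and the decomposition of $H^{-1}M$.

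\textbf{Invertibility of $H$.} Running the same argument with $M=0$ gives $(I_n-\Pi)y=0$ and $T_\varepsilon\Pi y=0$, hence $y=0$. So $H$ is injective, hence invertible. This should be done first, so that $y=H^{-1}M$ is well defined in the previous step.

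\textbf{Bounds.} The key observation is that $\Sigma$ and $\Pi$ commute, so they are simultaneously diagonalizable. In a common eigenbasis, $T_\varepsilon^\dagger\Sigma$ has eigenvalues $\sigma/(\sigma+\gamma\varepsilon)\in[0,1]$ on $\range(Q)$ and $0$ on $\ker(Q)$. Hence $T_\varepsilon^\dagger\Sigma$ is symmetric PSD with norm at most $1$.
\begin{itemize}
\item If $\range(V)\subseteq\range(\Sigma)$, then $V=\Sigma\Sigma^\dagger V$, so
\[
\|T_\varepsilon^\dagger V\|=\|(T_\varepsilon^\dagger\Sigma)\Sigma^\dagger V\|\leq\|\Sigma^\dagger V\|.
\]
The bound on $\|H^{-1}M\|$ then follows from the triangle inequality applied to $H^{-1}M=(I_n-\Pi)M+T_\varepsilon^\dagger V$.
\item If $V=\Sigma W$, then $\Sigma^\dagger V=\Sigma^\dagger\Sigma W$ is the orthogonal projection of $W$ onto $\range(\Sigma)$, so $\|\Sigma^\dagger V\|\leq\|W\|$.
\end{itemize}

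I expect the main obstacle to be the bookkeeping of ranges and commutation. Specifically, one must check that $V\in\range(Q)$ and that $\Sigma$, $\Pi$ and $T_\varepsilon$ share an eigenbasis; the pseudoinverse manipulations are legitimate only under these facts. The rest is routine linear algebra.
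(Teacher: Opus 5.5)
Your proposal is correct and follows essentially the same route as the paper. The paper also uses the two identities $(I_n-\Pi)H=I_n-\Pi$ and $Q^\dagger H=T_\varepsilon+\gamma\Pi H^f(I_n-\Pi)$ (exactly what your multiplication by $Q^\dagger$ and substitution $y=\Pi y+(I_n-\Pi)M$ produce) to obtain injectivity of $H$ and the two components, and then bounds $\|T_\varepsilon^\dagger\Sigma\|\leq1$ via the same spectral multipliers $\lambda/(\lambda+\gamma\varepsilon)$ on $\range(Q)$.
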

We now apply \lemref{lem_projected_inversion} to the regularized solution map. For
$D\in\mD_\varepsilon(\xsepst,\theta)$,
\propref{prop_composite_regularized} identifies
$-\Heps(D)^{-1}\Meps(D)$ as an element of the conservative Jacobian
\eqref{composite_regularized_jacobian}. Taking $H=\Heps(D)$ and
$M=\Meps(D)$ in \lemref{lem_projected_inversion} gives
\begin{equation}\label{composite_projected_components}
\begin{aligned}
    (I_n-\Pi_Q)\Heps(D)^{-1}\Meps(D)
    &=(I_n-\Pi_Q)\Meps(D),\\
    \Pi_Q\Heps(D)^{-1}\Meps(D)
    &=T_\varepsilon(D)^\dagger V(D).
\end{aligned}
\end{equation}
These identities determine the $\ker(Q)$ component without an inversion and
reduce the $\range(Q)$ component to the symmetric positive definite system with
$T_\varepsilon(D)$. Consequently,
\begin{equation}
    \label{comp_subspace_decomposition}
    -\Heps(D)^{-1}\Meps(D)
    =-(I_n-\Pi_Q)\Meps(D)-T_\varepsilon(D)^\dagger V(D).
\end{equation}
For the limiting result, the matrices in
\eqref{comp_subspace_decomposition} must remain uniformly bounded as
$\varepsilon\downarrow0$. The Tikhonov regularization does not damp the $\ker(Q)$ component (the first term), which must therefore be bounded directly. On
$\range(Q)$, \eqref{projected_inversion_bound} gives the required control
when $V(D)$ belongs to $\range(\Sigma(D))$ and
$\Sigma(D)^\dagger V(D)$ remains bounded. These requirements motivate the
following assumptions.

The decomposition is pointwise in $Q$. In particular, it does not assert
that $\Pi_Q$ varies continuously when $\mathrm{rank}(Q)$ changes; we return to
this issue when studying cluster points.
\begin{enumerate}[label={\bf(R')}]
\item \label{assum:range_c} {[Composite uniform range assumption]} With $O$
and $\bar\eps$ as above, suppose that for every compact set $K\subset O$
there exists $C_K>0$ such that, for every $\theta\in K$, every
$\varepsilon\in]0,\bar\eps]$, and every
$D\in\mD_\varepsilon(\xsepst,\theta)$,
\[
    \range(V(D))\subseteq\range(\Sigma(D)),
    \qquad
    \|\Sigma(D)^\dagger V(D)\|\leq C_K.
\]
\end{enumerate}
The range inclusion gives
$\Sigma(D)\Sigma(D)^\dagger V(D)=V(D)$. Conversely, if
$V(D)=\Sigma(D)W$, then
$\Sigma(D)^\dagger V(D)=\Sigma(D)^\dagger\Sigma(D)W$ is obtained by
projecting each column of $W$ orthogonally onto $\range(\Sigma(D))$, and
hence $\|\Sigma(D)^\dagger V(D)\|\leq\|W\|$. Thus \ref{assum:range_c} is
equivalent to the existence of uniformly bounded solutions of
$\Sigma(D)W=V(D)$, and is the exact analog of \ref{assum:range} for the
reduced system on $\range(Q)$.
\begin{enumerate}[label={\bf(M')}]
\item \label{assum:m_term_c} {[Composite uniform kernel assumption]} With $O$
and $\bar\eps$ as above, suppose that for every compact set $K\subset O$
there exists $B_K\geq0$ such that, for every $\theta\in K$, every
$\varepsilon\in]0,\bar\eps]$, and every
$D=(Q,P,H^f,M^f)\in\mD_\varepsilon(\xsepst,\theta)$,
\[
    \|(I_n-\Pi_Q)\Meps(D)\|
    =\|(I_n-\Pi_Q)P\|\leq B_K.
\]
\end{enumerate}
Since $(I_n-\Pi_Q)Q=0$ and $\Meps(D)=\gamma QM^f-P$,
\[
\begin{aligned}
    (I_n-\Pi_Q)\Meps(D)
    &=-(I_n-\Pi_Q)P,\\
    V(D)
    &=\gamma\Pi_QM^f-Q^\dagger P
      +\gamma\Pi_QH^f(I_n-\Pi_Q)P.
\end{aligned}
\]
Thus \ref{assum:m_term_c} controls only the $\theta$-block $P$ of
$J_{G^\gamma}$.
\begin{remark}
    [Sufficient conditions for \ref{assum:m_term_c}]\label{rem_Mc_sufficient}
    Assumption \ref{assum:m_term_c} holds in either of the following
    situations. (a) If $g$ does not depend on $\theta$, then $P=0$, so one
    may take $B_K=0$; moreover, $V(D)=\gamma\Pi_QM^f$.
    (b) Suppose that $\minsol$ is independently known to be locally bounded
    on $O$. The Tikhonov estimate
    $\|\xsepst\|\leq\|\minsol(\theta)\|$ (\thmref{thm_Browder}) and
    \ref{assum:reg_c} then imply that, for $\theta\in K$ and
    $0<\varepsilon\leq\bar\eps$, the points
    $(z_\varepsilon(\xsepst,\theta),\theta)$ remain in a compact subset of
    $\mbR^n\times O$. The Clarke Jacobian of the locally Lipschitz map
    $G^\gamma$ is bounded on this set, and hence so is
    $\|(I_n-\Pi_Q)P\|\leq\|P\|$.
\end{remark}
With those assumptions in mind, we can state the path-differentiability theorem. Its proof is given in \secref{ss_proof_composite_pathdiff}.
\begin{theorem}
    [Path-differentiability of the composite case]\label{thm_composite_pathdiff} Assume that \ref{assum:f_c}, \ref{assum:reg_c}, \ref{assum:range_c} and \ref{assum:m_term_c} hold. Then $J_{\minsol}$ defined in \eqref{eq:J_min} is a definable conservative Jacobian of the minimal-norm solution $\minsol$ of \eqref{pb_composite}. In particular, $\minsol$ is path-differentiable on $O$.
\end{theorem}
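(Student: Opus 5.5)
The plan is to reduce to \thmref{thm_Schechtman}, as in the proof of \thmref{thm_general}, by checking its four hypotheses for the family $J_{\xseps}$ of \propref{prop_composite_regularized} on $O\times]0,\bar\eps]$.

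\textbf{Definability and conservativity, $(i)$ and $(ii)$.} \propref{prop_composite_regularized} already gives, for each fixed $0<\varepsilon\leq\bar\eps$, that $J_{\xseps}$ in \eqref{composite_regularized_jacobian} is a definable conservative Jacobian of $\xseps$. For joint definability in $(\theta,\varepsilon)$ I would argue as in \remref{rem_verif_assumptions}. The graph of $(\theta,\varepsilon)\mapsto\xsepst$ is given by the first-order formula $\Phi_\varepsilon^\gamma(x,\theta)=0$, and $\Phi_\varepsilon^\gamma$ is definable jointly in $(x,\theta,\varepsilon)$ by \ref{assum:f_c} and \remref{rem_definability_composite}. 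Likewise $\minsol$ has a first-order description as the minimal-norm element of $\mS_c(\theta)$. The set $\mD_\varepsilon$ is built from the definable Clarke Jacobians composed with the definable map $z_\varepsilon$, and the maps $D\mapsto-\Heps(D)^{-1}\Meps(D)$ are rational, so the graph of $J_{\xs}$ is definable as well.

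\textbf{Uniform bound, $(iv)$.} Fix a compact $K\subset O$, $\theta\in K$, $0<\varepsilon\leq\bar\eps$ and $D\in\mD_\varepsilon(\xsepst,\theta)$. By \thmref{thm_CF_prox} and the discussion preceding \lemref{lem_projected_inversion}, $Q$ is symmetric with $0\preceq Q\preceq I_n$ and $H^f\succeq0$, so \lemref{lem_projected_inversion} applies with $H=\Heps(D)$ and $M=\Meps(D)$. Assumption \ref{assum:range_c} gives $\range(V(D))\subseteq\range(\Sigma(D))$ together with $\|\Sigma(D)^\dagger V(D)\|\leq C_K$. Then \eqref{projected_inversion_bound} and \ref{assum:m_term_c} yield
\[
\|\Heps(D)^{-1}\Meps(D)\|\leq\|(I_n-\Pi_Q)\Meps(D)\|+\|\Sigma(D)^\dagger V(D)\|\leq B_K+C_K,
\]
uniformly in $\theta\in K$, $\varepsilon\in]0,\bar\eps]$ and $D$. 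This is $(iv)$.

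\textbf{Uniform convergence, $(iii)$.} I would reprove \thmref{thm_loc_bound_deriv_imply_cont} in the composite setting. Its argument used only three ingredients: path-differentiability of each $\xseps$, which holds here by \propref{prop_composite_regularized}; the bound \eqref{eq:bounded_derivates}, which is the estimate just obtained; and pointwise Browder convergence $\xsepst\to\minsol(\theta)$, which holds for any convex lower semicontinuous proper objective with nonempty solution set (\thmref{thm_Browder}). Integrating along segments in a ball $K_0\subset O$ gives
\[
\|\xseps(\theta)-\xseps(\eta)\|\leq (B_{K_0}+C_{K_0})\|\theta-\eta\|,
\]
which passes to the limit and makes $\minsol$ locally Lipschitz. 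Joint continuity on $O\times\{0\}$ then follows by the same triangle inequality as before. Continuity at $\varepsilon>0$ comes from applying the IFT to $\Phi_\varepsilon^\gamma$ with $(\theta,\varepsilon)$ as parameters, since the $x$-blocks are invertible by \lemref{lem_projected_inversion}. Uniform continuity on $K\times[0,a]$ then gives $(iii)$.

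\textbf{Conclusion and main obstacle.} With $(i)$--$(iv)$ verified, \thmref{thm_Schechtman} shows that $J_{\minsol}$ is a definable conservative Jacobian and hence that $\minsol$ is path-differentiable. The step I expect to need the most care is $(i)$, namely definability of the joint family in $\varepsilon$ together with joint continuity at $\varepsilon>0$. The difficulty is that $\gamma$ is fixed while $\varepsilon$ varies, so one must check that the stepsize condition $\gamma<2/(L+2\bar\eps)$ keeps the fixed-point characterization valid for all $\varepsilon\leq\bar\eps$. The bound itself is immediate from \lemref{lem_projected_inversion}.
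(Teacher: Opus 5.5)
Your proposal is correct and follows essentially the same route as the paper's proof. You establish definability of the joint selection and of the assembled family $J_{\xs}$, get the uniform bound $B_K+C_K$ from \eqref{projected_inversion_bound} together with \ref{assum:range_c} and \ref{assum:m_term_c}, reuse the second half of the proof of \thmref{thm_loc_bound_deriv_imply_cont} with \thmref{thm_Browder} to obtain $(iii)$, and conclude with \thmref{thm_Schechtman}.

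One small correction to your closing remark. The fixed-point characterization $\Phi_\varepsilon^\gamma(x,\theta)=0$ holds for every $\gamma>0$, so the stepsize bound is not needed there; it only matters inside \propref{prop_composite_regularized}, which you take as given. Likewise, continuity at $\varepsilon>0$ is not needed for $(iii)$, because continuity on $O\times\{0\}$ plus a compactness argument already gives uniform convergence on compacts.
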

The uniform bound underlying the theorem ensures that derivative sequences have cluster points but does not identify their form. Two losses of continuity must be distinguished. The projector $Q\mapsto QQ^\dagger$ may jump when the rank of $Q$ changes, and, even after this projector stabilizes, eigenvalues of the reduced matrix $\Sigma$ may vanish relative to $\varepsilon$.

Under the assumptions of \thmref{thm_composite_pathdiff}, fix $\theta\in O$, let $\varepsilon_k\downarrow0$ with $0<\varepsilon_k\leq\bar\eps$, and select
\[
    D_k=(Q_k,P_k,H_k^f,M_k^f)
    \in\mD_{\varepsilon_k}(\xs_{\varepsilon_k}(\theta),\theta),
    \qquad
    D_k\to D_0=(Q_0,P_0,H_0^f,M_0^f).
\]
For the results below, write
\[
\begin{gathered}
    \Pi_k:=\Pi_{Q_k},\qquad
    \mathcal M_k:=\gamma Q_kM_k^f-P_k,\qquad
    \Sigma_k:=\Sigma(D_k),\\
    V_k:=V(D_k),\qquad
    T_k:=T_{\varepsilon_k}(D_k),\\
    J_k:=-H_{\varepsilon_k}(D_k)^{-1}\mathcal M_k
       =-(I_n-\Pi_k)\mathcal M_k-T_k^\dagger V_k,
\end{gathered}
\]
and define $\Pi_0$, $\mathcal M_0$, $\Sigma_0$, and $V_0$ analogously
from $D_0$.

\begin{proposition}[Rank-changing cluster points]
\label{prop_composite_cluster}
The limit satisfies $D_0\in\mD_0(\minsol(\theta),\theta)$, the sequence
$(J_k)_k$ is bounded, and every cluster point belongs to
$J_{\minsol}(\theta)$. If $J$ is such a cluster point, then, after passing to
a subsequence realizing it and extracting further, there exist an orthogonal
projector $\bar\Pi$ and a matrix $\bar Y$ such that
\[
    \Pi_k\to\bar\Pi,\qquad
    T_k^\dagger V_k\to\bar Y,\qquad
    J=-(I_n-\bar\Pi)\mathcal M_0-\bar Y.
\]
Moreover,
\[
    \bar\Pi Q_0=Q_0\bar\Pi=Q_0,\qquad
    \range(Q_0)\subseteq\range(\bar\Pi),\qquad
    \bar\Pi\bar Y=\bar Y.
\]
\end{proposition}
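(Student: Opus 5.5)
The proof splits into three parts: the limit lies in the right set, the sequence is bounded with admissible cluster points, and the projector decomposition passes to the limit.

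\emph{Step 1: membership of $D_0$.} Since $\xs_{\varepsilon_k}(\theta)\to\minsol(\theta)$ (compact-uniform convergence from \thmref{thm_composite_pathdiff} and its proof), the forward steps converge. By continuity of $\nabla_x f$,
\[
z_{\varepsilon_k}(\xs_{\varepsilon_k}(\theta),\theta)\to z_0(\minsol(\theta),\theta)
= \minsol(\theta)-\gamma\nabla_x f(\minsol(\theta),\theta).
\]
The Clarke Jacobians $\partial G^\gamma$ and $\partial(\nabla_x f)$ have closed graphs. Passing to the limit in the paired selections therefore gives $(Q_0\ P_0)\in J_{G^\gamma}(z_0,\theta)$ and $(H_0^f\ M_0^f)\in J_{\nabla_x f}(\minsol(\theta),\theta)$, that is, $D_0\in\mD_0(\minsol(\theta),\theta)$.

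\emph{Step 2: boundedness and cluster points.} Apply \lemref{lem_projected_inversion} with $H=H_{\varepsilon_k}(D_k)$ and $M=\mathcal M_k$. Using \ref{assum:range_c} and \ref{assum:m_term_c} at the compact set $K=\{\theta\}$, this gives
\[
\|J_k\|\le \|(I_n-\Pi_k)\mathcal M_k\|+\|\Sigma_k^\dagger V_k\|\le B_K+C_K .
\]
By \propref{prop_composite_regularized}, $J_k\in J_{\xs}(\theta,\varepsilon_k)$. Any cluster point $J$ is therefore in $J_{\minsol}(\theta)$ by definition \eqref{eq:J_min}, taking $\theta_k\equiv\theta$; the requirement $\xs(\theta,\varepsilon_k)\to\minsol(\theta)$ holds by Step~1.

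\emph{Step 3: limits of the projector pieces.} Fix a subsequence with $J_k\to J$. Orthogonal projectors form a compact set, since they have norm at most $1$ and the conditions $\Pi^2=\Pi=\Pi\tp$ are closed. After extracting, $\Pi_k\to\bar\Pi$, and $\bar\Pi$ is again an orthogonal projector. The term $(I_n-\Pi_k)\mathcal M_k$ converges to $(I_n-\bar\Pi)\mathcal M_0$, because $\mathcal M_k=\gamma Q_kM_k^f-P_k\to\mathcal M_0$. Then $T_k^\dagger V_k=-J_k-(I_n-\Pi_k)\mathcal M_k$ converges as a difference of convergent sequences. Call its limit $\bar Y$; this gives $J=-(I_n-\bar\Pi)\mathcal M_0-\bar Y$. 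No separate boundedness argument for $T_k^\dagger V_k$ is needed, although it is in any case bounded by $C_K$.

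\emph{Step 4: the algebraic relations.} By \lemref{lem_projected_inversion}, $T_k$ vanishes on $\ker Q_k$ and is positive definite on $\range Q_k$. Hence $\range(T_k^\dagger)=\range(Q_k)$, so $\Pi_kT_k^\dagger V_k=T_k^\dagger V_k$, and passing to the limit gives $\bar\Pi\bar Y=\bar Y$. Likewise $\Pi_kQ_k=Q_k\Pi_k=Q_k$ passes to the limit and gives $\bar\Pi Q_0=Q_0\bar\Pi=Q_0$. This implies $\range(Q_0)=\range(\bar\Pi Q_0)\subseteq\range(\bar\Pi)$.

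\emph{Main obstacle.} The only delicate point is that $Q\mapsto QQ^\dagger$ is discontinuous when the rank of $Q$ drops, so $\bar\Pi$ need not equal $\Pi_{Q_0}$. The argument above avoids this issue entirely: it uses only compactness of the set of projectors and identities that are preserved under limits, and makes no claim that $\bar\Pi=\Pi_{Q_0}$.
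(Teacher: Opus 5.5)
Your proposal is correct and follows essentially the same route as the paper. You obtain $D_0\in\mD_0(\minsol(\theta),\theta)$ by the closed-graph argument, bound $(J_k)_k$ through the projected-inversion estimate under (R') and (M') with $K=\{\theta\}$, extract $\Pi_k\to\bar\Pi$ by compactness, and pass the identities $\Pi_kQ_k=Q_k\Pi_k=Q_k$ and $\Pi_kT_k^\dagger V_k=T_k^\dagger V_k$ to the limit. Two differences are only cosmetic: you quote the bound of Lemma~\ref{lem_projected_inversion} directly, whereas the paper re-derives it via $R_k=T_k^\dagger\Sigma_k$ for reuse in the stable-$Q$ corollary, and you deduce convergence of $T_k^\dagger V_k$ from the decomposition instead of extracting it separately.
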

The proof is given in \secref{ss_proof_composite_cluster}.

\begin{corollary}[Stable-$Q$ residual formula]
\label{coro_composite_stable_Q}
Suppose that $\operatorname{rank}(Q_k)=\operatorname{rank}(Q_0)$ for all
sufficiently large $k$, and let $J$ be a cluster point of $(J_k)_k$. Set
$W_k:=\Sigma_k^\dagger V_k$ and $R_k:=T_k^\dagger\Sigma_k$. Then
\[
    Q_k^\dagger\to Q_0^\dagger,\qquad
    \Pi_k\to\Pi_0,\qquad
    \Sigma_k\to\Sigma_0,\qquad
    V_k\to V_0.
\]
Along every subsequence realizing $J$, a further subsequence satisfies
$W_k\to W$ and $R_k\to R$. Set
$P_\Sigma:=\Sigma_0^\dagger\Sigma_0$ and $S:=R-P_\Sigma$. Then
\[
    \Sigma_0W=V_0,\qquad
    0\preceq S\preceq\Pi_0-P_\Sigma,\qquad
    \Sigma_0S=S\Sigma_0=0,
\]
and
\begin{equation}
\label{composite_cluster_residual}
    J=-(I_n-\Pi_0)\mathcal M_0
      -\Sigma_0^\dagger V_0-SW.
\end{equation}
\end{corollary}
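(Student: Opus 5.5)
We plan to combine the rank-stability hypothesis with the continuity of the Moore--Penrose pseudoinverse on sets of constant rank, and then to reuse the spectral argument behind \thmref{thm_formula_memory} on the reduced system.

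\textbf{Step 1: continuity of the reduced blocks.} Since $\operatorname{rank}(Q_k)=\operatorname{rank}(Q_0)$ eventually and $Q_k\to Q_0$, the pseudoinverse is continuous along the sequence (it is continuous exactly on constant-rank sequences). This gives $Q_k^\dagger\to Q_0^\dagger$ and hence $\Pi_k=Q_kQ_k^\dagger\to Q_0Q_0^\dagger=\Pi_0$. Then $\Sigma_k=Q_k^\dagger-\Pi_k+\gamma\Pi_kH_k^f\Pi_k\to\Sigma_0$ and $\mathcal M_k\to\mathcal M_0$, so $V_k\to V_0$ by continuity of products. In particular, in \propref{prop_composite_cluster} we may take $\bar\Pi=\Pi_0$.

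\textbf{Step 2: extraction of $W$ and $R$.} By \ref{assum:range_c}, $\|W_k\|=\|\Sigma_k^\dagger V_k\|\le C_K$ with $K=\{\theta\}$, and the range inclusion gives $\Sigma_kW_k=V_k$. Next we control $R_k=T_k^\dagger\Sigma_k$. By \lemref{lem_projected_inversion}, $\Sigma_k$ is PSD with $\Pi_k\Sigma_k=\Sigma_k\Pi_k=\Sigma_k$, so $\Sigma_k$ and $T_k=\Sigma_k+\gamma\varepsilon_k\Pi_k$ commute and are simultaneously diagonalizable in an orthonormal basis of $\range(Q_k)$. An eigenvalue $\sigma\ge0$ of $\Sigma_k$ on $\range(Q_k)$ gives an eigenvalue $\sigma/(\sigma+\gamma\varepsilon_k)\in[0,1]$ of $R_k$, and $R_k$ vanishes on $\ker Q_k$. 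Hence $0\preceq R_k\preceq\Pi_k$, and $R_k$ is symmetric and commutes with $\Sigma_k$.

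We extract a further subsequence with $W_k\to W$ and $R_k\to R$. Passing to the limit yields:
\begin{itemize}
\item $\Sigma_0W=V_0$;
\item $0\preceq R\preceq\Pi_0$;
\item $R\Sigma_0=\Sigma_0R$, obtained as the limit of $R_k\Sigma_k=\Sigma_kR_k$.
\end{itemize}
Moreover, $T_k^\dagger V_k=T_k^\dagger\Sigma_kW_k=R_kW_k\to RW$. Substituting into the decomposition \eqref{comp_subspace_decomposition} gives $J=-(I_n-\Pi_0)\mathcal M_0-RW$. It remains to show $RW=\Sigma_0^\dagger V_0+SW$, i.e. $P_\Sigma W=\Sigma_0^\dagger\Sigma_0W=\Sigma_0^\dagger V_0$, which holds since $\Sigma_0W=V_0$.

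\textbf{Step 3: the residual identities (main obstacle).} We must show $\Sigma_0S=S\Sigma_0=0$ and $P_\Sigma\preceq R$, i.e. $S\succeq0$. The key observation is $\Sigma_kR_k=\Sigma_k-\gamma\varepsilon_k\Sigma_kT_k^\dagger$, because $\Sigma_kT_k^\dagger(\Sigma_k+\gamma\varepsilon_k\Pi_k)=\Sigma_k\Pi_k=\Sigma_k$. The bound $\|\gamma\varepsilon_k\Sigma_kT_k^\dagger\|\le\gamma\varepsilon_k\|\Sigma_k\|\,\|T_k^\dagger\|$ is useless, since $\|T_k^\dagger\|\le1/(\gamma\varepsilon_k)$ only. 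Instead we use the spectral form $\gamma\varepsilon_k\Sigma_kT_k^\dagger=\Sigma_k(\Pi_k-R_k)$. Its eigenvalues are $\sigma\cdot\gamma\varepsilon_k/(\sigma+\gamma\varepsilon_k)\le\gamma\varepsilon_k$, so its norm is at most $\gamma\varepsilon_k\to0$. Therefore $\Sigma_0(\Pi_0-R)=0$, i.e. $\Sigma_0R=\Sigma_0$ (using $\Sigma_0\Pi_0=\Sigma_0$).

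Multiplying on the left by $\Sigma_0^\dagger$ gives $P_\Sigma R=P_\Sigma$, and symmetry then gives $RP_\Sigma=P_\Sigma$. Hence $S=R-P_\Sigma$ satisfies $\Sigma_0S=\Sigma_0R-\Sigma_0=0$ and, by symmetry, $S\Sigma_0=0$. Since $R$ commutes with the projector $P_\Sigma$, we can write $S=(I-P_\Sigma)R(I-P_\Sigma)\succeq0$. Finally, $P_\Sigma\preceq\Pi_0$ because $\range\Sigma_0\subseteq\range\Pi_0$, and combined with $R\preceq\Pi_0$ this yields $S\preceq\Pi_0-P_\Sigma$.
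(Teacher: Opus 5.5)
Your proposal is correct and follows the paper's proof essentially step by step. Like the paper, you use constant-rank continuity of the pseudoinverse for the reduced blocks, boundedness of $W_k$ from \ref{assum:range_c}, and the spectral description $0\preceq R_k\preceq\Pi_k$. You then apply the estimate $\|\Sigma_k-\Sigma_kR_k\|\leq\gamma\varepsilon_k$ (your $\Sigma_k(\Pi_k-R_k)$ is the same matrix) to get $\Sigma_0R=\Sigma_0$, and multiply by $\Sigma_0^\dagger$. Your factorization $S=(I_n-P_\Sigma)R(I_n-P_\Sigma)$ makes the positivity $S\succeq0$ explicit, which the paper leaves implicit.
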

The proof is given in \secref{ss_proof_composite_stable_Q}.

For a sequence satisfying the stable-rank condition in
\corref{coro_composite_stable_Q}, consider either of the following spectral
alternatives:
\[
    (\mathrm{G}_\Sigma)\qquad
    \operatorname{spec}(\Sigma_k)
    \subset\{0\}\cup[\lambda^*,+\infty[
    \quad\text{eventually, for some }\lambda^*>0,
\]
or, writing the eigenvalues in nonincreasing order,
\[
    (\mathrm{V}_\Sigma)\qquad
    \lambda_i(\Sigma_k)\to0
    \quad\Longrightarrow\quad
    \lambda_i(\Sigma_k)=o(\varepsilon_k),
    \qquad i=1,\ldots,n.
\]

\begin{corollary}[Pseudoinverse cluster formula]
\label{coro_composite_pseudoinverse}
Under the stable-rank hypothesis of \corref{coro_composite_stable_Q}, if
either $(\mathrm{G}_\Sigma)$ or $(\mathrm{V}_\Sigma)$ holds, then the
representation in \corref{coro_composite_stable_Q} may be taken with $S=0$.
Equivalently, every cluster point $J$ of $(J_k)_k$ satisfies
\begin{equation}
\label{composite_cluster_pseudoinverse}
    J=-(I_n-\Pi_0)\mathcal M_0-\Sigma_0^\dagger V_0.
\end{equation}
\end{corollary}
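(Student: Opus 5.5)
The plan is to start from the representation in \corref{coro_composite_stable_Q} and show that, under either spectral alternative, the limit $R$ of $R_k=T_k^\dagger\Sigma_k$ equals $P_\Sigma=\Sigma_0^\dagger\Sigma_0$. Then $S=R-P_\Sigma=0$, and \eqref{composite_cluster_residual} reduces to \eqref{composite_cluster_pseudoinverse}. Since $R$ is the limit along a subsequence realizing $J$, the identity $R=P_\Sigma$ must be proved for every such subsequential limit, so every cluster point has the claimed form.

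First I would diagonalize. By \lemref{lem_projected_inversion}, $\Sigma_k$ is symmetric PSD with $\Pi_k\Sigma_k=\Sigma_k\Pi_k=\Sigma_k$. Hence $\Sigma_k$ and $\Pi_k$ commute, and there is an orthonormal basis $(u_i^k)$ diagonalizing both. Write $\Sigma_k=\sum_i\sigma_i^k u_i^k(u_i^k)\tp$. The basis vectors in $\range(\Pi_k)$ carry the eigenvalues of $\Sigma_k$ restricted to $\range(Q_k)$; the others lie in $\ker\Pi_k$ with eigenvalue zero. Then $T_k=\Sigma_k+\gamma\varepsilon_k\Pi_k$, and on $\range(\Pi_k)$
\[
R_k=T_k^\dagger\Sigma_k=\sum_{u_i^k\in\range(\Pi_k)}\frac{\sigma_i^k}{\sigma_i^k+\gamma\varepsilon_k}\,u_i^k(u_i^k)\tp ,
\]
while $R_k$ vanishes on $\ker\Pi_k$. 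Since the factor $\gamma$ is harmless, this is exactly the scalar trichotomy of \secref{sss_conditions}.

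Next I would pass to a further subsequence on which the eigenvalues (in nonincreasing order) and the eigenvectors converge. By \corref{coro_composite_stable_Q}, $\Sigma_k\to\Sigma_0$, so $\sigma_i^k\to\sigma_i^0$ and $u_i^k\to u_i^0$, with $(u_i^0)$ an orthonormal eigenbasis of $\Sigma_0$. For indices with $\sigma_i^0>0$, the multiplier tends to $1$. For indices with $\sigma_i^0=0$, the multiplier tends to $0$: under $(\mathrm{G}_\Sigma)$ the eigenvalue is eventually exactly $0$, because it cannot lie in $[\lambda^*,\infty[$ and still converge to $0$; under $(\mathrm{V}_\Sigma)$ it is $o(\varepsilon_k)$. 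Therefore
\[
R=\sum_{\sigma_i^0>0}u_i^0(u_i^0)\tp=\Sigma_0^\dagger\Sigma_0=P_\Sigma .
\]

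The main obstacle is the bookkeeping of the eigendecomposition under multiplicity and rank changes. Eigenvectors need not converge when eigenvalues coalesce, and the ordering of the $\sigma_i^k$ inside $\range(\Pi_k)$ can interleave with the zero eigenvalues on $\ker\Pi_k$. I would avoid both issues by working with spectral projectors instead of individual eigenvectors. Group the limiting eigenvalues $\sigma_i^0>0$ into clusters. The spectral projector of $\Sigma_k$ onto the eigenvalues near each cluster converges to the corresponding projector of $\Sigma_0$, by the continuity of Riesz projectors. The remaining eigenvalues, those tending to $0$, are handled by the uniform bound $\sigma/(\sigma+\gamma\varepsilon_k)\to0$: the operator norm of $R_k$ restricted to that spectral subspace tends to zero. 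Summing these contributions gives $R_k\to P_\Sigma$. This also makes the further extraction of eigenvectors unnecessary.
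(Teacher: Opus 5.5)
Your proposal is correct and follows essentially the paper's proof. Both reduce the claim to $R_k\to P_\Sigma=\Sigma_0^\dagger\Sigma_0$ by tracking the multipliers $\lambda/(\lambda+\gamma\varepsilon_k)$, which tend to $1$ on eigenvalues with positive limit and to $0$ on vanishing ones (exactly zero eventually under $(\mathrm{G}_\Sigma)$, $o(\varepsilon_k)$ under $(\mathrm{V}_\Sigma)$), with convergence of the spectral projector onto the positive spectrum supplying $P_\Sigma$.

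The obstacle you flag is milder than you fear. Because $\Sigma_k$ vanishes on $\ker\Pi_k$, one has $R_k=\Sigma_k(\Sigma_k+\gamma\varepsilon_k I_n)^{-1}$, a function of $\Sigma_k$ alone, so no interleaving issue arises. In any case, orthonormal eigenbases of $\Sigma_k$ can be extracted by compactness to converge to an eigenbasis of $\Sigma_0$.
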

The proof is given in \secref{ss_proof_composite_pseudoinverse}.

\emph{Stable reduced rank.}
If, in addition to stable rank of $Q_k$,
$\operatorname{rank}(\Sigma_k)=\operatorname{rank}(\Sigma_0)$ for all
sufficiently large $k$, then $(\mathrm{G}_\Sigma)$ holds. Indeed,
$\Sigma_k\to\Sigma_0$, so their positive eigenvalues stay uniformly away
from zero.

\begin{remark}
    [Failure of \ref{assum:range_c} in Figure \ref{fig:linf}]\label{rem_failure_Rcomp_Linf} Consider the problem $\max_{\|x\|_{\infty}\leq 1}\langle \theta,x\rangle$, at $\theta_0 = (0, -1)$. Separability yields, after some algebra, that 
    \[(\xsepst)_i = \max(\min(+1, \theta_i / \varepsilon), -1) = \Pi_{[-1, +1]}(\theta_i/\varepsilon), \quad [J_{\xseps}(\theta)]_i = \left\{ \begin{array}{cc} 0 & |\theta_i| > \varepsilon, \\ 1/\varepsilon & |\theta_i| < \varepsilon, \\ {[}0, 1/\varepsilon] & |\theta_i| = \varepsilon.\end{array} \right.\]
    Since $\theta_1 = 0$ but $\theta_2 \neq 0$, $[J_{\xseps}(\theta)]_1 = 1/\varepsilon$ which diverges as expected since the minimal-norm solution is not continuous. Formally, one can show that for some blocks $D$ and some blocks $Q$, $\range(V(D)) \subsetneq \range(\Sigma(D))$.
\end{remark}

\subsection{Return to the LASSO}\label{ss_return_LASSO}
\begin{corollary}[LASSO path-differentiability]
\label{coro_LASSO_pathdiff}
The minimal-norm solution $\minsol$ of \eqref{LASSO} is
path-differentiable as a function of $\theta$.
\end{corollary}
\begin{proof}
For the LASSO, $P=0$, $H^f=A\tp A$, $M^f=-A\tp$, and
$Q\in\diag([0,1]^n)$. Hence \ref{assum:m_term_c} holds with $B_K=0$, and
\[
    V(D)=-\gamma\Pi_QA\tp.
\]
If $Q=0$, then $\Sigma(D)=V(D)=0$, so \ref{assum:range_c} is immediate.
Otherwise, let $S_+=S_+(Q)$ and $Q_+$ be as in \secref{ss_LASSO}, and set
\[
    \Delta_Q:=\frac{Q_+^{-1}-I_{|S_+|}}{\gamma}\succeq0.
\]
The nonzero blocks are
\[
    \Sigma(D)_{S_+,S_+}
    =\gamma\big(\Delta_Q+A_{:,S_+}\tp A_{:,S_+}\big),
    \qquad
    V(D)_{S_+,:}=-\gamma A_{:,S_+}\tp,
\]
all other rows and columns of $\Sigma(D)$ vanish, as do all other rows of
$V(D)$. Consequently,
\eqref{comp_subspace_decomposition} is supported on $S_+$ and reduces there
to
\[
    \big(\Delta_Q+\varepsilon I_{|S_+|}
       +A_{:,S_+}\tp A_{:,S_+}\big)^{-1}A_{:,S_+}\tp,
\]
which is the active-space formula obtained in \secref{ss_LASSO}. Moreover,
\[
    \range(A_{:,S_+}\tp)
    =\range(A_{:,S_+}\tp A_{:,S_+})
    \subseteq
    \range\big(\Delta_Q+A_{:,S_+}\tp A_{:,S_+}\big).
\]
Thus $\range(V(D))\subseteq\range(\Sigma(D))$, and
\[
    \big[\Sigma(D)^\dagger V(D)\big]_{S_+,:}
    =
    -\big(\Delta_Q+A_{:,S_+}\tp A_{:,S_+}\big)^\dagger A_{:,S_+}\tp,
    \qquad
    \big[\Sigma(D)^\dagger V(D)\big]_{S_+^c,:}=0.
\]
For $\delta>0$, \lemref{lem_scaled_gram_bound} applied with
$D=\Delta_Q+\delta I_{|S_+|}$ bounds the nonzero block by
$\chi(A_{:,S_+})\leq\chi(A)$. Letting $\delta\downarrow0$ gives the displayed
pseudoinverse because every column of $A_{:,S_+}\tp$ belongs to the range of
the limiting positive semidefinite matrix. Hence \ref{assum:range_c} holds with
$C_K=\chi(A)$, and the conclusion follows from
\thmref{thm_composite_pathdiff}.
\end{proof}

We now identify explicit elements of $J_{\minsol}(\theta)$ obtained from
binary soft-threshold selections along the regularized trajectory. Fix
$\theta\in\mbR^m$, set $\tau:=\gamma e^\lambda$, and define
\[
\begin{aligned}
    \mQ_\varepsilon(\theta)
    &:=
    \partial\prox_{\gamma e^\lambda\|\cdot\|_1}
    \big(z_\varepsilon(\xsepst,\theta)\big),
    &&0<\varepsilon\leq\bar\eps,\\
    \mQ_0(\theta)
    &:=
    \partial\prox_{\gamma e^\lambda\|\cdot\|_1}
    \big(z_0(\minsol(\theta),\theta)\big).
\end{aligned}
\]
The fixed-parameter reachable set is
\begin{equation}\label{LASSO_reachable_set}
    \mR^{\mathrm{LASSO}}(\theta)
    :=
    \left\{
    Q\in\mQ_0(\theta):
    \begin{array}{l}
        \exists\ \varepsilon_k\downarrow0,
        Q_k\in\mQ_{\varepsilon_k}(\theta),\\
        Q_k\to Q
    \end{array}
    \right\}.
\end{equation}
Since \eqref{eq:J_min} also permits $\theta_k\to\theta$, this fixed-parameter
set need not describe every element of $J_{\minsol}(\theta)$. For
$i\in[1:n]$, let
\[
    \rho_i(\varepsilon;\theta)
    :=
    \left|
    \big[z_\varepsilon(\xsepst,\theta)\big]_i
    \right|-\tau.
\]
Each $\rho_i(\cdot;\theta)$ is definable, so, for all sufficiently small
$\varepsilon>0$, it is either positive, negative, or identically zero.
\begin{corollary}[Reachable binary LASSO derivatives]
\label{coro_LASSO_reachable}
Fix $\theta\in\mbR^m$ and set
\[
    \mR_{\mathrm{bin}}^{\mathrm{LASSO}}(\theta)
    :=
    \mR^{\mathrm{LASSO}}(\theta)\cap\diag(\{0,1\}^n).
\]
A binary diagonal matrix $Q$ belongs to
$\mR_{\mathrm{bin}}^{\mathrm{LASSO}}(\theta)$ if and only if, for every
$i\in[1:n]$,
\[
Q_{ii}\in
\begin{cases}
\{1\},&
    \rho_i(\varepsilon;\theta)>0
    \text{ for all sufficiently small }\varepsilon,\\
\{0\},&
    \rho_i(\varepsilon;\theta)<0
    \text{ for all sufficiently small }\varepsilon,\\
\{0,1\},&
    \rho_i(\varepsilon;\theta)=0
    \text{ for all sufficiently small }\varepsilon.
\end{cases}
\]
In particular,
$\mR_{\mathrm{bin}}^{\mathrm{LASSO}}(\theta)\neq\varnothing$, and
\begin{equation}\label{LASSO_reachable_derivatives}
    \left\{
    (AQ)^\dagger:
    Q\in\mR_{\mathrm{bin}}^{\mathrm{LASSO}}(\theta)
    \right\}
    \subseteq J_{\minsol}(\theta).
\end{equation}
\end{corollary}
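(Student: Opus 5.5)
The proof splits into three parts: the characterization of binary reachable matrices, nonemptiness, and identification of the derivative.

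\emph{Characterization.} Each $\rho_i(\cdot;\theta)$ is definable in $\varepsilon$, so by o-minimal monotonicity it has a constant sign on some interval $]0,\varepsilon_i[$. Let $\varepsilon_0:=\min_i\varepsilon_i$. For $\varepsilon<\varepsilon_0$ the coordinatewise description of $\partial\prox_{\tau\|\cdot\|_1}$ gives $\mQ_\varepsilon(\theta)=\prod_i\mathcal{I}_i$, where $\mathcal{I}_i=\{1\}$, $\{0\}$ or $[0,1]$ according to the three sign cases.

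Suppose first that $Q$ is binary and satisfies the displayed conditions. Then $Q\in\mQ_\varepsilon(\theta)$ for all small $\varepsilon$, so the constant sequence $Q_k\equiv Q$ works, provided also $Q\in\mQ_0(\theta)$. This holds because $z_\varepsilon(\xsepst,\theta)\to z_0(\minsol(\theta),\theta)$ (Browder convergence and continuity of $z$) and the Clarke Jacobian has closed graph.

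Conversely, suppose $Q_k\in\mQ_{\varepsilon_k}(\theta)$ and $Q_k\to Q$. Coordinates with $\rho_i>0$ eventually force $(Q_k)_{ii}=1$, and those with $\rho_i<0$ force $(Q_k)_{ii}=0$, so the limit has the prescribed entries. Coordinates with $\rho_i\equiv0$ only require $Q_{ii}\in\{0,1\}$, since $Q$ is binary. Nonemptiness follows by choosing, say, $Q_{ii}=1$ in the $\rho_i\equiv0$ case.

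\emph{Identification of the derivative.} Fix such a binary $Q$ and take the constant selection $D_k=(Q,0,A\tp A,-A\tp)\in\mD_{\varepsilon_k}$. Then $\operatorname{rank}(Q_k)=\operatorname{rank}(Q_0)$ trivially, so \corref{coro_composite_stable_Q} applies and $J_k$ is exactly $V_k$ from the proof of \corref{coro_LASSO_pathdiff}. Let $S_+$ be the support of $Q$. Since $Q_+=I$, we have $\Delta_Q=0$, and
\[
J_k=\big(\varepsilon_kI+A_{:,S_+}\tp A_{:,S_+}\big)^{-1}A_{:,S_+}\tp
\]
on rows $S_+$, with zero rows elsewhere.

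This reduced matrix is constant, so $(\mathrm{G}_\Sigma)$ holds: the spectrum of the fixed matrix $\Sigma_k=\gamma A_{:,S_+}\tp A_{:,S_+}$ (padded by zeros) is finite. \corref{coro_composite_pseudoinverse} therefore gives the limit $J=-\Sigma_0^\dagger V_0$. The same conclusion also follows directly from the Least-Squares computation, since $(\varepsilon I+B\tp B)^{-1}B\tp\to B^\dagger$.

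On rows $S_+$ this limit equals $(A_{:,S_+})^\dagger$, and it vanishes elsewhere. Because $Q$ is a diagonal $0/1$ projector, the padded matrix is $(AQ)^\dagger$: columns of $AQ$ outside $S_+$ are zero, and the pseudoinverse of a matrix with zero columns has the corresponding zero rows. The sign convention must be checked at this point, since $J_k=-H^{-1}\mathcal{M}$ with $\mathcal{M}=-\gamma QA\tp$, so the result is $+(AQ)^\dagger$, matching \eqref{LASSO_Jxseps}.

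Finally, \propref{prop_composite_cluster} places this cluster point in $J_{\minsol}(\theta)$, using $\theta_k\equiv\theta$ in \eqref{eq:J_min} together with the joint continuity established for the composite setting.

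The main obstacle I expect is the converse direction of the characterization, together with the membership $Q\in\mQ_0(\theta)$. One must check that the limiting point $z_0(\minsol(\theta),\theta)$ is compatible, via closed graph, with the eventual sign pattern. If $\rho_i\to0$ with strict sign, the limit $\mQ_0$ allows $[0,1]$ but the reachable binary value is forced; this asymmetry must be handled through the eventual-sign description rather than through the limit alone.
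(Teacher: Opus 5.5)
Your proposal is correct and follows essentially the same route as the paper. The characterization uses the eventual constant signs of the definable $\rho_i$. The forward inclusion uses the constant sequence $Q_k\equiv Q$ together with Browder convergence and the closed graph of $\partial\prox_{\gamma e^\lambda\|\cdot\|_1}$, and the converse follows because the eventual signs force the entries. The derivative comes from the limit $(\varepsilon_kI+A_{:,S_1}\tp A_{:,S_1})^{-1}A_{:,S_1}\tp\to A_{:,S_1}^\dagger$ of the active-space formula, padded to $(AQ)^\dagger$ and placed in $J_{\minsol}(\theta)$ via \eqref{eq:J_min} with $\theta_k\equiv\theta$.

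The detour through \corref{coro_composite_stable_Q}, \corref{coro_composite_pseudoinverse} and joint continuity is harmless but unnecessary; the direct SVD limit you also mention is exactly what the paper uses, and Browder convergence alone suffices for \eqref{eq:J_min}. One notational slip: in the paper's notation $V_k:=V(D_k)=-\gamma\Pi_QA\tp$, not $J_k$, so you mean the active-space expression for $J_k$.
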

The proof is given in \secref{ss_proof_LASSO_reachable}.
\begin{remark}
[Differentiation with respect to $\theta$ and $\lambda$]
\label{rem_LASSO_parameters}
All LASSO derivatives above are taken with respect to the observation $\theta$,
with $\lambda$ fixed. In this parameterization, the proximal map is independent
of $\theta$, hence $P=0$, and \propref{prop_LASSO_unif_bound} gives a uniform
bound without any rank assumption on $A$. If $\lambda$ is differentiated
instead, it enters the soft-threshold level: $M^f=0$, while the proximal
parameter block $P$ need not vanish. The argument of
\propref{prop_LASSO_unif_bound} therefore does not apply directly.
\citet[Proposition~5 and Section~C.3]
{bolte_le_pauwels_silveti-falls_2021} obtain path-differentiability of the
LASSO solution map without Tikhonov regularization by assuming that
$A_{:,E}\tp A_{:,E}$ is invertible, where $E$ is the equicorrelation set. A very similar assumption arises in \citet{bertrand_klopfenstein_massias_blondel_vaiter_gramfort_salmon_2022}.
Their condition is sufficient for that $\lambda$-sensitivity result, whereas our $\theta$-sensitivity result for the minimal-norm selection requires no such condition.
\end{remark}

\subsection{Convergence Rates with Fixed Tikhonov Regularization}\label{ss_convergence_rates}
This section discusses the accuracy algorithms can reach, in the context of Tikhonov regula\-ri\-zation, following \citet{grazzi_pontil_salzo_2024, bolte_pauwels_vaiter_2022}. We focus on problem \eqref{pb_composite} under the same assumptions. Recall from \eqref{composite_fixed_point} the fixed-point formulation:
\begin{equation}
    \label{fixed_point_for_rates} x = \prox_{\gamma g(\cdot,\theta)}(z_{\varepsilon}(x, \theta)) = \varphi^{\gamma}_{\varepsilon}(x, \theta), \qquad \zext = x - \gamma\varepsilon x - \gamma\nabla_x f(x, \theta).
\end{equation}
The mapping $\varphi^{\gamma}_{\varepsilon}$ is forward-backward due to the proximal (implicit/backward) step on $\gamma g$ composed with the gradient-descent (explicit/forward) step on $\gamma f$. A key property to study the sequence of Jacobians is that the derivatives of $\varphi_{\varepsilon}^{\gamma}$ w.r.t. $x$ are contracting.
Let $\overline{\varepsilon} > 0$ be such that $\varepsilon \in ]0, \overline{\varepsilon}]$, for $\gamma \in ]0, 2/(2\overline{\varepsilon} + L)[$, $\varphi^{\gamma}_{\varepsilon}$ has contracting $x$-derivatives. Indeed, using \eqref{composite_sets_matrices},
\[0 \preceq H^f \preceq L I_n, \qquad (1 - \gamma\varepsilon - \gamma L)I_n \preceq I_n - \gamma\varepsilon I_n - \gamma H^f \preceq (1 - \gamma\varepsilon) I_n,\]
so the eigenvalues of $I_n - \gamma\varepsilon I_n - \gamma H^f$ belong to $]-1, +1[$. One concludes using that $\|\Qeps\| \leq 1$. Once a suitable $\gamma$ is set, we can use fixed-point algorithms such as in \citet[Section~4]{grazzi_pontil_salzo_2024} \citep[see also][Section~4.1]{bolte_pauwels_vaiter_2022}, using \eqref{fixed_point_for_rates} and leading to the iterations
\begin{equation}
    \label{rates_algo_iterates} x^0_{\varepsilon}(\theta) \in \mbR^n, \qquad x^{k+1}_{\varepsilon}(\theta) = \varphi^{\gamma}_{\varepsilon}(x_{\varepsilon}^k(\theta), \theta).
\end{equation}
Now, consider the derivatives. For fixed $\varepsilon > 0$, due to the contractivity of the $x$-derivatives, the fixed-point approach yields a rather natural conservative Jacobian \citep{bolte_pauwels_vaiter_2022}:
\[J^{\mathrm{fix}}_{\xseps}(\theta) := \mathrm{fix}\left(J_{\varphi_{\varepsilon}^{\gamma}}(\xsepst, \theta)\right)\]
where $\mathrm{fix}$ denotes the (set of) fixed-points of a set of matrices, to be understood as:
\[\begin{split}
    \mathrm{fix}\left(J_{\varphi^{\gamma}_{\varepsilon}}(\xsepst, \theta)\right) = &\ J_{\varphi^{\gamma}_{\varepsilon}}(\xsepst, \theta)\left[\mathrm{fix}\left(J_{\varphi^{\gamma}_{\varepsilon}}(\xsepst, \theta)\right)\right], \\
    & \ J_{\varphi^{\gamma}_{\varepsilon}}(\xsepst, \theta)(\mZ) = \{AZ + B : Z \in \mZ, (A\ B) \in J_{\varphi^{\gamma}_{\varepsilon}}(\xsepst, \theta)\},
\end{split}\]
where $A$ is the $x$-block and $B$ the $\theta$-block (thus $\|A\| < 1$). Notably, $J^{\mathrm{imp}}_{\xseps}(\theta) \subset J^{\mathrm{fix}}_{\xseps}(\theta)$ \citep[][Remark~2]{bolte_pauwels_vaiter_2022} when both are computed with the same $J_{\varphi^{\gamma}_{\varepsilon}}$. Elements of $J^{\mathrm{fix}}_{\xseps}(\theta)$ can be computed by the following iterations \citep[Section~4]{grazzi_pontil_salzo_2024}, akin to \eqref{rates_algo_iterates}:
\[\begin{array}{llll}
    (\mathrm{ITER}) & J^0_{\xseps}(\theta) = \{0\}, & J^{k+1}_{\xseps}(\theta) = H^k J^{k}_{\xseps}(\theta) + M^k, & (H^k\ M^k) \in \partial \varphi^{\gamma}_{\varepsilon}(x^k_{\varepsilon}(\theta), \theta).
\end{array}\]
An improved version fixes $N \in \mbN$, computes $x^N_{\varepsilon}(\theta)$ and, for $k \leq N$, uses:
\[\begin{array}{llll}
    (\mathrm{FIXN}) & J^{N, 0}_{\xseps}(\theta) = \{0\}, & J^{N, k+1}_{\xseps}(\theta) = H^N J^{N, k}_{\xseps}(\theta) + M^N, & (H^N\ M^N) \in \partial \varphi^{\gamma}_{\varepsilon}(x^N_{\varepsilon}(\theta), \theta).
\end{array}\]
The version $(\mathrm{FIXN})$ yields better results since $x^N_{\varepsilon}(\theta)$ is typically closer to $\xsepst$ than $x^k_{\varepsilon}(\theta)$. Indeed, \thmref{thm_GPS} gives some upper bound between the excess (denoted $e$, $e(\mC, \mD) = \sup_{C \in \mC} \inf_{D \in \mD}\|C - D\|$) and an element computed by the iterations $(\mathrm{FIXN})$.
\begin{theorem}
    [Rates of convergence, {\cite[Theorem 4.1 simplified]{grazzi_pontil_salzo_2024}}]\label{thm_GPS} Let $\varepsilon > 0$ and $\gamma > 0$ small enough. One has, denoting $k$ the iteration number and $\Delta_0 := \|x^0_{\varepsilon}(\theta) - \xsepst\|$,
    \begin{equation}\label{main_bounds_GPS}
        \begin{split}
        \|x_{\varepsilon}^k(\theta) - \xsepst\| & \leq C_1(1 - \gamma\varepsilon)^k = O((1 - \gamma\varepsilon)^k) \\
        e(J^{N, k}_{\xseps}(\theta), J_{\xseps}^{\mathrm{fix}}(\theta)) & \leq \frac{B_{\theta}}{\gamma\varepsilon}(1-\gamma\varepsilon)^k + \frac{B_{\theta} + \gamma\varepsilon}{\gamma\varepsilon} \left(L' + \frac{M_{\theta}}{R_{\theta}}\right) \frac{1 - (1 - \gamma\varepsilon)^k}{\gamma\varepsilon} (1-\gamma\varepsilon)^N \Delta_0
    \end{split}
    \end{equation}
    where $\varphi_{\varepsilon}^{\gamma}$ is $L'$-Lipschitz smooth, $B_{\theta}$, $M_{\theta}$ and $R_{\theta}$ are other constants of $\varphi^{\gamma}_{\varepsilon}$.
\end{theorem}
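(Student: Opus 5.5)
The plan is to obtain the statement as a direct specialization of \cite[Theorem~4.1]{grazzi_pontil_salzo_2024}. The only problem-specific ingredient to check is the contraction modulus of $\varphi^{\gamma}_{\varepsilon}(\cdot,\theta)$: it should equal $q:=1-\gamma\varepsilon$, so that every occurrence of $1-q$ in their bound becomes $\gamma\varepsilon$.

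\emph{Step 1: contraction of the fixed-point map.} The smooth part $f(\cdot,\theta)+\tfrac{\varepsilon}{2}\|\cdot\|^2$ is $\varepsilon$-strongly convex and $(L+\varepsilon)$-smooth. Hence the forward step $x\mapsto z_\varepsilon(x,\theta)$ is Lipschitz with constant $\max\{|1-\gamma\varepsilon|,|1-\gamma(L+\varepsilon)|\}$. This constant equals $1-\gamma\varepsilon$ as soon as $\gamma\le 2/(L+2\varepsilon)$, which is the ``$\gamma$ small enough'' requirement and matches the choice $\gamma<2/(L+2\bar\eps)$ made earlier. The proximal map $G^\gamma(\cdot,\theta)$ is nonexpansive, so $\varphi^\gamma_\varepsilon(\cdot,\theta)$ is a $q$-contraction. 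The same argument at the level of blocks uses $0\preceq H^f\preceq LI_n$ and $\|Q\|\le1$, which were recalled just before the statement. It shows that every $x$-block $A$ of $J_{\varphi^\gamma_\varepsilon}$ satisfies $\|A\|\le q$. The first bound in \eqref{main_bounds_GPS} then follows from Banach's fixed-point theorem, $\|x^k_\varepsilon(\theta)-\xsepst\|\le q^k\Delta_0$, with $C_1=\Delta_0$.

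\emph{Step 2: the Jacobian recursion.} Fix an element $J^\star=AJ^\star+B$ of $J^{\mathrm{fix}}_{\xseps}(\theta)$, with $(A\ B)$ taken at $(\xsepst,\theta)$. Since $\|A\|\le q$, we have $\|J^\star\|\le B_\theta/(1-q)=B_\theta/(\gamma\varepsilon)$, where $B_\theta$ bounds the $\theta$-blocks. Subtracting the (FIXN) recursion from the fixed-point equation gives the error recursion
\[
\|J^{N,k+1}-J^\star\|\le q\|J^{N,k}-J^\star\| +\|H^N-A\|\,\|J^\star\|+\|M^N-B\|.
\]
Under the smoothness assumed in \cite{grazzi_pontil_salzo_2024} ($\varphi^\gamma_\varepsilon$ being $L'$-Lipschitz smooth), the block perturbations are controlled by $\|x^N_\varepsilon(\theta)-\xsepst\|\le q^N\Delta_0$. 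The constants $M_\theta,R_\theta$ enter through their control of the $\theta$-block mismatch. Unrolling the recursion from $J^{N,0}=0$ gives two contributions. The initial error $\|J^\star\|q^k\le \frac{B_\theta}{\gamma\varepsilon}q^k$ produces the first term. The geometric sum $\sum_{j<k}q^j=\frac{1-q^k}{\gamma\varepsilon}$, multiplied by $\big(\frac{B_\theta}{\gamma\varepsilon}+1\big)(L'+M_\theta/R_\theta)q^N\Delta_0$, produces the second term. Taking the infimum over $J^\star$ and the supremum over the selections then yields the bound on the excess $e$.

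\emph{Main obstacle.} In genuinely nonsmooth instances such as the LASSO, the Clarke Jacobian of $\varphi^\gamma_\varepsilon$ is not Lipschitz in $x$, so $\|H^N-A\|$ cannot be bounded by $\|x^N_\varepsilon(\theta)-\xsepst\|$. This is exactly why the statement is imported in its simplified form with the $L'$-smoothness hypothesis of \cite{grazzi_pontil_salzo_2024}. I would not attempt to remove that hypothesis. Beyond Step 1, which is where $\gamma\varepsilon$ appears and explains the $1/\varepsilon$ blow-up that motivates the discussion, the proof reduces to checking that our $\varphi^\gamma_\varepsilon$ satisfies their standing assumptions with contraction constant $1-\gamma\varepsilon$.
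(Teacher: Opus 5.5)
Your route is the paper's, which gives no argument beyond citing \citet[Theorem~4.1]{grazzi_pontil_salzo_2024}. The only problem-specific input is the contraction modulus $1-\gamma\varepsilon$ of the $x$-blocks, and the paper checks it in the paragraph just before the statement exactly as your Step~1 does (using $0\preceq H^f\preceq LI_n$, $\|Q\|\leq1$, $\gamma<2/(L+2\bar\eps)$). Your unrolled recursion reproduces the displayed constants: the factor $(B_\theta+\gamma\varepsilon)/(\gamma\varepsilon)$ is $B_\theta/(\gamma\varepsilon)+1$, i.e.\ $\|J^\star\|+1$.

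What you should correct is your reading of the hypothesis, since it produces a false ``main obstacle''. The cited result is a \emph{nonsmooth} one, and the factor $L'+M_\theta/R_\theta$ is precisely what handles nonsmoothness. It is the constant of a one-sided excess estimate $e\big(J_{\varphi^\gamma_\varepsilon}(x,\theta),J_{\varphi^\gamma_\varepsilon}(\xsepst,\theta)\big)\leq(L'+M_\theta/R_\theta)\|x-\xsepst\|$. This combines an $L'$-estimate inside a radius $R_\theta$ of the fixed point (for piecewise Lipschitz-smooth maps, where no new piece becomes active) with a uniform Jacobian bound $M_\theta$ outside that radius. It is not a $\theta$-block constant; your own arithmetic multiplies it by $\|J^\star\|$ too.

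With this estimate your Step~2 also covers the LASSO. Soft-thresholding is piecewise affine, so near $\xsepst$ the Jacobian pairs at $x$ are among those at $\xsepst$. The one change is that $J^\star$ must be chosen \emph{after} the selection rather than fixed in advance. Take $(A\ B)\in J_{\varphi^\gamma_\varepsilon}(\xsepst,\theta)$ nearest to $(H^N\ M^N)$ and set $J^\star:=(I_n-A)^{-1}B\in J^{\mathrm{imp}}_{\xseps}(\theta)\subset J^{\mathrm{fix}}_{\xseps}(\theta)$. If the selected pair changes with $k$, use the moving target $Z_{k+1}:=A_kZ_k+B_k$ instead. It stays in $J^{\mathrm{fix}}_{\xseps}(\theta)$ by the defining identity of that set, and it satisfies $\|Z_k\|\leq B_\theta/(\gamma\varepsilon)$.

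Restricting to a genuinely $\mC^{1,1}$ map $\varphi^\gamma_\varepsilon$, as you propose, would make the theorem useless for its role in the paper. In that case $J^{\mathrm{fix}}_{\xseps}(\theta)=J^{\mathrm{imp}}_{\xseps}(\theta)$ is a singleton. The paper invokes the bound for composite problems precisely because $J^{\mathrm{imp}}\subsetneq J^{\mathrm{fix}}$ there, which is the ``nonsmoothness error'' term of its error decomposition.
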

The following remark exposes, in the H\"olderian setting (see \defref{def_Holder_function} and \thmref{thm_holder_bound}), via \citet{maulen-soto_fadili_attouch_2025}, a simple but important fact of regularization techniques.
\begin{remark}
    [Rate for $x$ in the H\"olderian setting]\label{rem_iterate_holder} Suppose that $f + g$ verifies a H\"olderian error bound $f(x, \theta) + g(x, \theta) - f(\minsol(\theta), \theta) - g(\minsol(\theta), \theta) \geq \kappa \dist(x, \mS(\theta))^p$ for some $p \geq 1$ and $\kappa > 0$ (see \thmref{thm_holder_bound}). Then there exists $C_0$ and $\varepsilon^*$ such that, for any $\varepsilon \leq \varepsilon^*$, one has
    \[\|x^k_{\varepsilon}(\theta) - \minsol(\theta)\| \leq \underbrace{ C_1 (1 - \gamma\varepsilon)^k}_{\textnormal{approximation error}} + \underbrace{C_0\varepsilon^{\frac{1}{2p}}}_{\textnormal{regularization error}}.\]
    Indeed, $\|x^k_{\varepsilon}(\theta) - \minsol(\theta)\| \leq \|x^k_{\varepsilon}(\theta) - \xsepst\| + \|\xsepst - \minsol(\theta)\| \leq C_1(1-\gamma\varepsilon)^k + C_0\varepsilon^{\frac{1}{2p}}$ using the triangle inequality, \eqref{main_bounds_GPS} and then Theorem~\ref{thm_holder_bound}.
\end{remark}
When $\etz$, so does $C_0\varepsilon^{\frac{1}{2p}}$ but $C_1(1-\gamma\varepsilon)^k$ increases: the fidelity to the original problem is counterbalanced by slower convergence speed; it is easier to solve a more regularized problem; see also \corref{coro_estimates_delta}. For the conservative mappings, we could similarly use
\begin{equation*}
\label{excess_triangular_majoration}e(J^{k}_{\xseps}(\theta), J_{\minsol}(\theta)) \leq e(J^{k}_{\xseps}(\theta), J_{\xseps}^{\mathrm{fix}}(\theta)) + e(J_{\xseps}^{\mathrm{fix}}(\theta), J_{\minsol}(\theta)).
\end{equation*}
The first term, by contractivity, can be estimated in terms of $\varepsilon$ and $k$ by \thmref{thm_GPS}. However, $J^{\mathrm{fix}}_{\xseps}$ is not given by an IFT-type formula so different assumptions would be necessary to deal with the second term. Using $e(\mA, \mC) \leq e(\mA, \mB) + e(\mB, \mC)$ \citep[see][Lemma~B.1(i)]{grazzi_pontil_salzo_2024}, we have
\begin{equation}\label{excess_triangular_majoration_2}
    e(J^{k}_{\xseps}(\theta), J_{\minsol}(\theta)) \leq \underbrace{e(J^{k}_{\xseps}(\theta), J_{\xseps}^{\mathrm{fix}}(\theta))}_{\textnormal{approximation error}} + \underbrace{e(J_{\xseps}^{\mathrm{fix}}(\theta), J^{\mathrm{imp}}_{\xseps}(\theta))}_{\textnormal{nonsmoothness error}} + \underbrace{e(J^{\mathrm{imp}}_{\xseps}(\theta), J_{\minsol}(\theta))}_{\textnormal{regularization error}}.
\end{equation}
Here, the second term represents some ``derivative gap'' between two specific conservative mappings: since in general $J^{\mathrm{imp}} \subsetneq J^{\mathrm{fix}}$, this term is nonzero. Finally, the third term can be $o(1)$ by \thmref{thm_Schechtman}, but without explicit rate. Those are available for simple problems such as Least-Squares (see \propref{prop_rates_LS}), and in some settings where the solution is unique (see \citet{pauwels2023derivatives} in optimal transport). The estimations of the nonsmoothness and regularization errors are interesting open questions.
\begin{remark}
    In \citet{bolte_pauwels_vaiter_2022} and \citet{grazzi_pontil_salzo_2024}, the explicit upper bounds are for the strongly convex setting. In \citet[theorem 1]{blondel_berthet_cuturi_frostig_hoyer_llinares-lopez_pedregosa_vert_2021}, the strong bounds are obtained in the smooth case, with global uniform invertibility and Lipschitz bound.
\end{remark}

\section{Numerical Experiments}\label{s_experiments}
Full descriptions of the experimental details and data used are given in Appendix~\ref{s_additional_experiments}.
\subsection{Data Poisoning: Ill-posed Least-Squares}\label{ss_riboflavin_poisoning}
Data \textit{poisoning} \citep{xiao_biggio_brown_fumera_eckert_roli_2015, mei_zhu_2015} is a bilevel optimization problem modeling an adversary whose goal is to modify training data to degrade the performance of a learned model. The lower-level problem fits the model to the poisoned data, and the upper-level problem selects the data modification based on some upper-level objective. We study an instance with a Least-Squares lower-level problem using the Riboflavin regression data \citep{buhlman_kalisch_meier_2014}, whose feature dimension $4088$ is greater than its $71$ observations, so that the solution set is a nontrivial affine subspace. This setting allows us to precisely characterize minimal and nonminimal solutions in closed-form.

Denote the lower-level design matrix $A\in\mbR^{n_{\rm tr}\times p}$ and clean response $\theta_0\in\mbR^{n_{\rm tr}}$, and the upper-level design matrix $A_{\rm val}\in\mbR^{n_{\rm val}\times p}$ and response $b_{\rm val}\in\mbR^{n_{\rm val}}$. The adversary replaces $\theta_0$ by $\theta$, while $\xs\in\mbR^p$ represents the fitted model. Poisoning under a Euclidean budget $\tau > 0$ can then be written
\begin{equation*}
    \underbrace{\max_{\|\theta-\theta_0\|_2\leq \tau}\
    \tfrac{1}{2n_{\rm val}}\|A_{\rm val}x^\star(\theta)-b_{\rm val}\|^2}_{\text{upper-level problem}}
    \quad\text{subject to}\quad
    x^\star(\theta)\in
    \underbrace{\argmin_{x\in\mbR^p}\
    \tfrac{1}{2n_{\rm tr}}\|Ax-\theta\|^2}_{\text{lower-level problem}}.
\end{equation*}
The minimal-norm solution has closed-form $\minsol(\theta) = A^\dagger\theta$. Nonminimal solutions can be systematically constructed given some $q_k\in \ker(A)$ with $\|q_k\|_2=1$ by forming
\begin{equation}\label{eq_nonminimal_selection}
    x^\star_{c,k}(\theta)=\minsol(\theta)+c\,\|\minsol(\theta_0)\|_2\,q_k.
\end{equation}
We can then evaluate and optimize a single-level objective in $\theta$ for each selection, which we denote $\Phi_{c,k}(\theta):=\tfrac{1}{2n_{\rm val}}\|A_{\rm val}x^\star_{c,k}(\theta)-b_{\rm val}\|^2$, with $x^\star_{0,0}(\theta):=\minsol(\theta)$ by convention. The difference between hypergradients for two solutions can be computed
\begin{equation}\label{eq_constant_gradient_difference}
    \nabla_\theta\Phi_{c,k}(\theta)-\nabla_\theta\Phi_{\min}(\theta)
    =\frac{c\,\|\minsol(\theta_0)\|_2}{n_{\rm val}}\,(A^\dagger)^\top A_{\rm val}^\top A_{\rm val}q_k. 
\end{equation}
For each seed (corresponding to one split of the data), we draw $20$ such perturbations $q_k\in\ker(A)$, giving a controlled family of exact, nonminimal solutions indexed by the magnitude of the offset $c$. 

For each seed and each solution selection, we run $150$ iterations of normalized projected-momentum ascent on the upper-level objective value with the exact gradient. We report the relative poison damage $d_{c,k}:=\Phi_{c,k}(\theta_{150})/\Phi_{c,k}(\theta_0)-1$ for each selection $k$ and for different magnitudes $c$. Figure~\ref{fig:riboflavin-ls} reports the median of this quantity across $20$ seeds, showing the poisoning effect decreasing monotonically as the selected solution drifts farther from minimal-norm one. Table~\ref{tab:riboflavin-ls-win-counts} gives the number of seeds for which the minimal-norm solutions yielded a better upper-level objective value. 
\begin{table}[htbp]
    \centering
    \caption{Riboflavin Least-Squares: number of paired reporting splits in which the minimal-norm has greater relative poison damage than the within-split median over the $20$ nonminimal seeds. ``Better'' means a greater upper-level objective value.}
    \label{tab:riboflavin-ls-win-counts}
    \begin{tabular}{lcccccc}
        \toprule
        $c$ & $1$ & $2$ & $3$ & $5$ & $7.5$ & $10$ \\
        \midrule
        Minimal-norm better (out of $20$) & $14$ & $16$ & $19$ & $19$ & $20$ & $20$ \\
        \bottomrule
    \end{tabular}
\end{table}
\begin{figure}[htbp]
      \centering
      \includegraphics[width=0.62\linewidth]{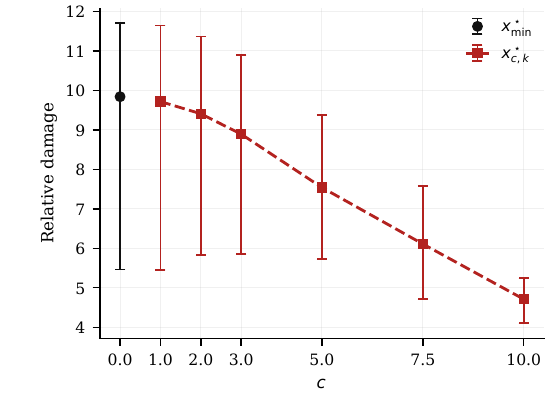}
      \caption{Riboflavin Least-Squares: final relative damage after $150$ iterations, with $c=0$ denoting the minimal norm selection. Markers are medians over the $20$ seeds and error bars are $95.86\%$ intervals for the median. Full details in Appendix~\ref{ss_appendix_riboflavin_poisoning}.}
      \label{fig:riboflavin-ls}
\end{figure}
\subsection{Data Poisoning: Ill-posed Huber Regression}\label{ss_nyc311_huber}
We consider now data poisoning with a different lower-level problem: Huber regression using the NYC Open Data service-request records \citep{nyc_open_data_311}. The detailed preprocessing protocol is given in Appendix~\ref{ss_appendix_nyc311_huber}.

Suppose the lower data have request-type/incident-ZIP incidence matrix $A\in\mbR^{n_{\rm tr}\times p}$ and clean response $\theta_0\in\mbR^{n_{\rm tr}}$, and the validation data have design $A_{\rm val}$ and response $b_{\rm val}$. The adversary replaces $\theta_0$ by $\theta$, while $x\in\mbR^p$ contains request information. Denoting $h_\delta$ as the standard Huber loss with threshold $\delta>0$, the bilevel problem is then
\begin{equation}\label{eq_nyc311_huber_bilevel}
    \underbrace{\max_{\|\theta-\theta_0\|_2\leq\tau}
    \tfrac{1}{2n_{\rm val}}\|A_{\rm val}\xs(\theta) - b_{\rm val}\|^2}_{\text{upper-level problem}}
    \quad\text{subject to}\quad
    \xs(\theta)\in
    \underbrace{\argmin_{x\in\mbR^p}
    \sum_{i=1}^{n_{\rm tr}}h_\delta\bigl((Ax-\theta)_i\bigr)}_{\text{lower-level problem}}.
\end{equation} 

The lower-level problem is ill-posed since, depending on the split, the number of columns of the design matrix $p$ is between $131$ and $149$ and the kernel has dimension $p-\operatorname{rank}(A)$ between $2$ and $9$. Thus, for any lower solution $x$ and any $z\in\ker(A)$, $x+z$ gives exactly the same lower predictions and objective. In contrast to \secref{ss_riboflavin_poisoning}, the minimal-norm solution for this problem is not known analytically. But, by rewriting the Huber loss using convex duality, one can compute the polyhedral set $\mS(\theta)$ and solve a quadratic problem to obtain the minimal-norm solution, see Appendix~\ref{ss_appendix_nyc311_huber}. Using directions $q_k\in \ker(A)$ with $\|q_k\|_2=1$ we can generate perturbed nonminimal solutions
\begin{equation*}
    x^\star_{c,k}(\theta)=\minsol(\theta)+c\,\|\minsol(\theta_0)\|_2\,q_k, \qquad c\in\{0.5,1,2,3\}.
\end{equation*}

We follow \propref{prop_huber_some_limits} to compute an element $V$ in $J_{\minsol}(\theta)$. At each $\theta$, we identify a reachable Huber matrix $Q$ as defined in \eqref{eq:huber_matrices} by solving the Tikhonov problem over a fixed, decreasing grid of $\varepsilon$ values. Each solution $\xs_{\varepsilon}$ determines the diagonal matrix $Q_\varepsilon$ with
\[
(Q_\varepsilon)_{ii}\in
\begin{cases}
\{1\},
& \left|(A\xs_\varepsilon(\theta)-\theta)_i\right|<\delta,\\
[0,1],
& \left|(A\xs_\varepsilon(\theta)-\theta)_i\right|=\delta,\\
\{0\},
& \left|(A\xs_\varepsilon(\theta)-\theta)_i\right|>\delta
\end{cases}
\]
taking different values depending on if the residual is in the quadratic or linear regime. We take $Q=Q_{\varepsilon_{\min}}$, where $\varepsilon_{\min}$ is the smallest value in the grid, and check that $Q_\varepsilon$ does not change over the final three values of $\varepsilon$. This gives numerical evidence that $Q$ represents a reachable Huber matrix selected by the Tikhonov path. We can then compute an element of the conservative Jacobian
\begin{equation}\label{eq_nyc311_implicit_jacobian}
    V=(A\tp QA)^\dagger A\tp Q
\end{equation}
and compute the implicit hypergradient
\[
   V^\top\frac{2}{n_{\rm val}}A_{\rm val}^\top
      (A_{\rm val}\xs_{c,k}-b_{\rm val}).
\]

Just as in \secref{ss_riboflavin_poisoning}, for each selection we do $150$ iterations of projected normalized gradient ascent with momentum on the constrained problem in \eqref{eq_nyc311_huber_bilevel} and plot the median relative poison damage in Figure~\ref{fig:nyc311-huber}. In this problem setting, using the minimal-norm is better on all $20$ splits at every $c$. Appendix~\ref{ss_appendix_nyc311_huber} gives the full details.

\begin{figure}[htbp]
    \centering
    \begin{minipage}[t]{0.49\linewidth}
        \centering
        \includegraphics[width=\linewidth]{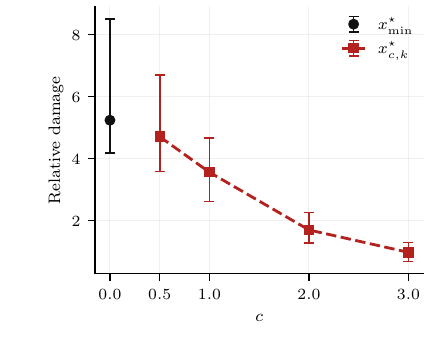}
    \end{minipage}\hfill
    \begin{minipage}[t]{0.49\linewidth}
        \centering
        \includegraphics[width=\linewidth]{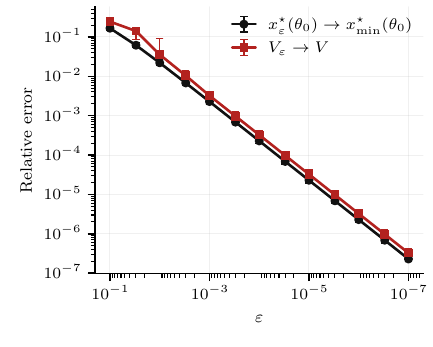}
    \end{minipage}
    \caption{NYC $311$ Huber poisoning. \emph{Left:} each selection's relative poison damage $d_{c,k}$. \emph{Right:} the relative solution error $\|\xs_\varepsilon(\theta_0)-\minsol(\theta_0)\|_2/\|\minsol(\theta_0)\|_2$ and relative derivative error $\|V_\varepsilon-V\|_{\rm F}/\|V\|_{\rm F}$ along the Tikhonov path at the clean response, where $V_\varepsilon=(A\tp Q_\varepsilon A+\varepsilon I_p)^{-1}A\tp Q_\varepsilon$ and $V=(A\tp QA)^\dagger A\tp Q$. In both plots, markers are medians over the $20$ declared splits and whiskers are $95.86\%$ intervals for the median. Full details are in Appendix~\ref{s_data_poison}.}
    \label{fig:nyc311-huber}
\end{figure}

\subsection{Data Hypercleaning: Ill-posed LASSO}\label{ss_digits_lasso_distillation}
Data \textit{hypercleaning} seeks to modify a dataset on which a model can be trained to improve performance on held out data \citep{franceschi_frasconi_salzo_grazzi_pontil_2018}. We adapt this construction to the scikit-learn Digits data set so that the lower-level problem fits Corollary~\ref{coro_LASSO_pathdiff}. For each seed, we collect one example per class into $A\in\mbR^{10\times64}$ and then learn new soft-labels for each example. The upper variable is thus a synthetic response matrix $\theta\in\mbR^{10\times10}$, initialized at $I_{10}$ and constrained rowwise to the probability simplex. Let $e^{\lambda}=10^{-3}$, the lower-level problem is
\begin{equation}\label{eq_digits_lasso_lower}
X^\star_\varepsilon(\theta) \in \argmin_{X\in\mbR^{64\times10}}
    \frac{1}{2\cdot10}\|AX-\theta\|_F^2
    +e^{\lambda}\|X\|_1+\frac{\varepsilon}{2}\|X\|_F^2.
\end{equation}
At $\varepsilon=0$, the positive $\ell_1$ penalty makes the solution set nonempty and compact. To ensure existence of multiple solutions, we average pairs of consecutive columns in $A$. The minimal-norm solution can be found by splitting equally within duplicate pairs.

We solve the lower-level problem starting from $X_0=0$ using $N\in\{10^3, 10^4, 10^5, 2.5 \cdot 10^5\}$ iterations of forward-backward,
\begin{equation}\label{eq_digits_lasso_ista}
X_{k+1}=\operatorname{soft-thresholding}_{\gamma e^{\lambda}}\!\left(
X_k-\gamma\left[A^\top(AX_k-\theta)/10+\varepsilon X_k\right]\right),
\qquad \gamma=0.9/1.1
\end{equation}
with $\varepsilon\in\{10^{-6},3\cdot10^{-6},10^{-5},3\cdot10^{-5},10^{-4},3\cdot10^{-4},10^{-3}\}$. The final iterate $X_N$ defines a support set $S_N$ which we use to solve a system with matrix $\tfrac{1}{10}A_{:,S_N}^\top A_{:,S_N}+\varepsilon I$; at $\varepsilon=0$ its pseudoinverse is used.

We do $100$ iterations of projected normalized gradient ascent with momentum on the upper-level objective. Figure~\ref{fig:digits-lasso-implicit-response} reports the median upper-level objective value over five seeds. Of the values we tested, the best $\varepsilon$ moves from $3\cdot10^{-4}$ at $N=10^{3}$, to $10^{-6}$ at $N=2.5\cdot10^5$. Thus using less iterations on the lower-level problem benefits from additional regularization, whereas a more accurate solve can use less Tikhonov bias, which is consistent with the tradeoff in \remref{rem_iterate_holder}.
\begin{figure}[t]
    \centering
    \includegraphics[width=0.88\linewidth]{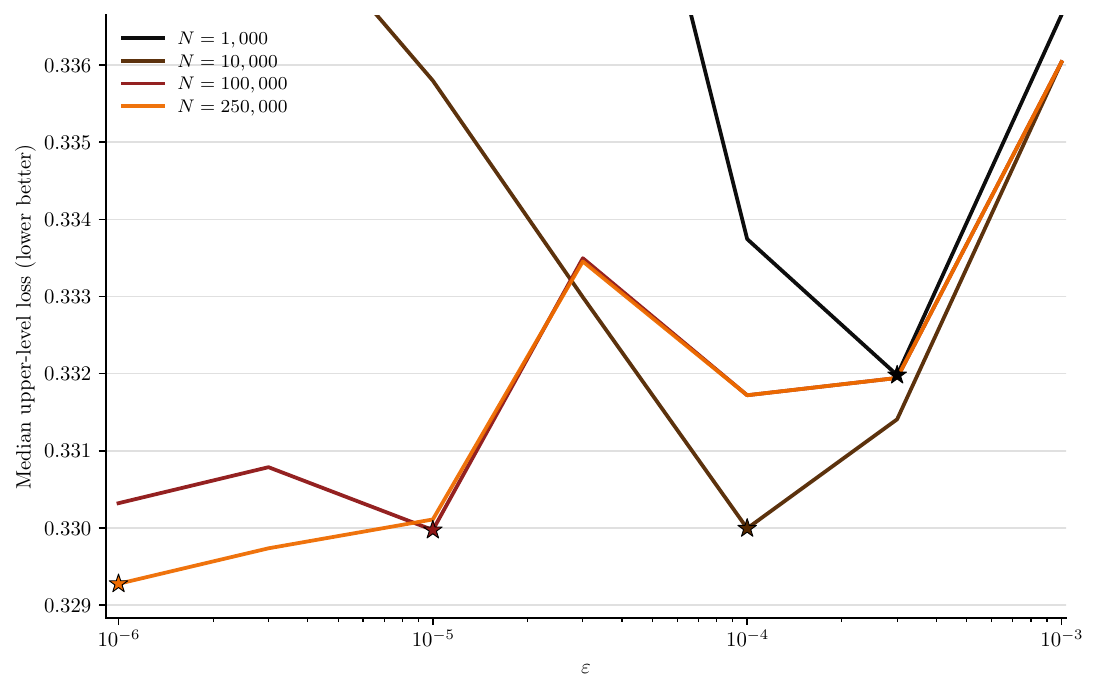}
    \caption{Final upper-level objective value versus the Tikhonov parameter $\varepsilon$. Each curve fixes the number $N$ of forward-backward iterations on the lower-level problem and corresponds to a median over $5$ runs. Stars mark the minimum of each curve over the tested values of $\varepsilon$. More details available in Appendix~\ref{ss_additional_distillation}}
    \label{fig:digits-lasso-implicit-response}
\end{figure}

\section{Concluding remarks}

This paper presents an approach to deal with bilevel problems in which the lower-level is convex but ill-posed and possesses multiple solutions. We proceed by adding Tikhonov regularization, which makes the lower-level problem strongly convex, which allows to use implicit differentiation and output a regularized hypergradient that can be used in a first-order method for the upper level. While the regularized solution converges towards the minimal-norm solution of the lower-level problem, the convergence of derivatives is the main question considered in the paper. An interesting direction for future research would be the identification of conditions yielding convergence rates of conservative Jacobians with respect to the Tikhonov regularization parameter.

\textit{Use of LLMs}\quad All of our experiments involved hyperparameter sweeps and designs whose parallelization was aided through the use of ChatGPT-5.5 and Claude Opus 4.6. ChatGPT 5.6 Sol and Claude Fable 5 were used extensively to review and polish later drafts of the manuscript.

\acks{This work was supported by the French National Research Agency (ANR) under grant ANR-25-CE23-3749 (project SIMPLES). Antonio Silveti-Falls and Baptiste Plaquevent-Jourdain thank Sholom Schechtman for fruitful discussions. }

\appendix
\section{Additional Experimental Results}\label{s_additional_experiments}
The experiments use NumPy, JAX \citep{bradbury_frostig_hawkins_johnson_leary_maclaurin_necula_paszke_vanderplas_wandermanmilne_zhang_2018}, scikit-learn \citep{pedregosa_varoquaux_gramfort_michel_thirion_grisel_blondel_prettenhofer_weiss_dubourg_vanderplas_passos_cournapeau_brucher_perrot_duchesnay_2011}, and SciPy \citep{virtanen_2020} on an Apple M1 CPU in float64.

\subsection{Data Poisoning}\label{s_data_poison}

This appendix describes the protocol for the two experiments of \secref{ss_riboflavin_poisoning} and \secref{ss_nyc311_huber}.

\subsubsection{Riboflavin least squares}\label{ss_appendix_riboflavin_poisoning}

\textit{Exact lower-level solutions}\quad We solve every lower-level Least-Squares problem directly by a truncated SVD. This yields the pseudoinverse, the minimal-norm solution, and the null space used throughout the experiment. 

\textit{Data and splits}\quad The Riboflavin data set has $71$ observations and $4088$ features. We randomly split this data into training and validation sets 20 times corresponding to seeds $s$. For each seed $s$, we permute the observations with \texttt{numpy.random.default\_rng(s)} and assign the first $45$ to the lower-level problem and the next $12$ to upper-level. Feature and response means and population standard deviations are fitted on the $45$ lower-level rows only and applied to both partitions. We denote the resulting lower-level design matrix by $A$ and clean response by $\theta_0$, and the upper-level design matrix and response by $A_{\rm val}$ and $b_{\rm val}$, respectively; the remaining $14$ observations are not used.

\textit{Nonminimal selections}\quad For each split of the data, we draw $g_k\sim N(0,I_p)$ for $k=0,\dots,19$, setting $z_k=\Pi_{\ker(A)}(g_k)$ and $q_k=z_k/\|z_k\|_2$; a draw with $\|z_k\|_2\leq10^{-12}\|g_k\|_2$ is rejected and replaced by the next draw from the same stream. Selections are formed as in \eqref{eq_nonminimal_selection} for $c\in\{1,2,3,5,7.5,10\}$, with the radius computed from the minimal-norm solution corresponding to the clean data and then held fixed along the whole upper-level trajectory. This is an isotropic sampling on the null space at a given radius $c$; it is not uniform over the entire, unbounded solution set.

\textit{Solving the upper-level problem}\quad Each selection $\xs_{c,k}$ defines a single-level objective $\Phi_{c,k}(\theta)$. We set the budget or the radius of the constraint to be $\tau=0.1\,n_{\rm tr}\operatorname{std}(\theta_0)=4.5$. Write $\eta_t=\theta_t-\theta_0$ and initialize $\eta_0=v_0=0$. Trajectories are generated using forward-backward, for $t=0,\dots,149$,
\begin{equation}\label{eq:forward-backward}
    u_t=0.01\,\tau\,\frac{h_t}{\|h_t\|_2+10^{-12}},\qquad
    v_{t+1}=0.9\,v_t+u_t,\qquad
    \eta_{t+1}=\Pi\left(\eta_t+v_{t+1}\right),
\end{equation}
with $\theta_t=\theta_0+\eta_t$ and $h_t=\nabla\Phi_{\min}(\theta_t)$ or $h_t=\nabla\Phi_{c,k}(\theta_t)$, where $\Pi$ is the Euclidean projection onto the constraint set.

\subsubsection{NYC 311 Huber regression}\label{ss_appendix_nyc311_huber}

\textit{Data and splits}\quad 
We use $20$ Tuesday windows from ISO weeks $2$--$21$ of 2023 in the NYC Open Data service-request records \citep{nyc_open_data_311}. Within each window, eligible requests are ordered by creation time, with the unique request key breaking ties; the first $250$ form the lower-level and the next $1000$ are validation candidates. If $t_i$ is the observed number of hours from creation to closure, the unstandardized response is
\[
    r_i=\log\left(1+\min\{t_i,720\}\right).
\]
Any missing closures are assigned the $720$-hour cap. Let $\bar r_{\rm tr}$ and $s_{\rm tr}$ be the mean and standard deviation of the $250$ lower-level responses. The clean lower-level response is $\theta_0=(r_{\rm tr}-\bar r_{\rm tr})/s_{\rm tr}$, and validation responses use the same transformation; we denote the resulting validation vector by $b_{\rm val}$. These quantities derived from the lower-level data are held fixed during the bilevel optimization.

The lower-level design matrix $A$ is the incidence matrix of the bipartite graph joining request types to incident ZIP codes, and thus every row contains exactly one request-type and one ZIP indicator. Across the $20$ windows the number of columns $p$ is between $131$ and $149$, while $\operatorname{rank}(A)$ is between $125$ and $147$, meaning $\dim\ker(A)$ is between $2$ and $9$.

\textit{Huber parameter and complete solution set}\quad We use the general Huber loss function with parameter $\delta>0$
\[
    h_\delta(r)=
    \begin{cases}
      \tfrac12 r^2, & |r|\leq\delta,\\
      \delta|r|-\tfrac12\delta^2, & |r|>\delta.
    \end{cases}
\]
Following \citet{huber1964} we have
\[
    x_{\rm LS}=A^\dagger\theta_0,\qquad
    \widehat\sigma
    =\left(
      \frac{\|Ax_{\rm LS}-\theta_0\|_2^2}
           {250-\operatorname{rank}(A)}
    \right)^{1/2},\qquad
    \delta=1.345\,\widehat\sigma.
\]
The value of $\delta$ is computed from the clean lower partition and held fixed throughout the upper optimization. 

For a given $\theta$, to compute the lower-level solution set $\mS(\theta)$, an approach is to use the convex conjugate to rewrite
\[h_{\delta} = \delta \left((\|\cdot\|_1)^* + \frac{\delta}{2}\|\cdot\|^2_2 \right)^*.\]
Using the componentwise Huber loss, the dual problem to computing $x^{\star}(\theta) \in \mS(\theta)$ reads 
\begin{equation}\label{eq_nyc311_huber_dual}
    u^\star(\theta)
    =\argmin_{u}\ \frac12\|u+\theta\|_2^2
    \quad\text{s.t.}\quad A^\top u=0,
    \quad \|u\|_\infty\leq\delta.
\end{equation}
Define $I_0=\{i:|u_i^\star|<\delta\}$, $I_+=\{i:u_i^\star=\delta\}$, and $I_-=\{i:u_i^\star=-\delta\}$. The complete primal solution polyhedron is
\begin{equation}\label{eq_nyc311_huber_polyhedron}
    \mathcal S(\theta)=\{x\colon A_{I_0}x=\theta_{I_0}+u^\star_{I_0},\quad A_{I_+}x\geq\theta_{I_+}+\delta\mathbf 1,\quad A_{I_-}x\leq\theta_{I_-}-\delta\mathbf 1\}.
\end{equation}
We solve the dual projection, construct \eqref{eq_nyc311_huber_polyhedron}, and then compute the global minimal-norm solution from the secondary quadratic program
\begin{equation}\label{eq_nyc311_huber_minnorm_qp}
    \minsol(\theta)
    =\argmin_{x\in\mathcal S(\theta)}\frac12\|x\|_2^2.
\end{equation}
Redundant equalities are removed by SVD, the QP solution is polished on its active face, and primal feasibility, duality, score consistency, and the KKT conditions of \eqref{eq_nyc311_huber_minnorm_qp} are checked independently.

\textit{Nonminimal solutions}\quad The request-type/ZIP incidence graph can have several connected components. Assigning a constant to the request-type coefficients and its negative to the ZIP coefficients within any one component does not affect the lower-level problem. These component shifts span a subspace of $\ker(A)$. We remove the universal shift, which also leaves every possible validation prediction unchanged, and form the remaining component-contrast subspace. For each split, we generate $10$ unit directions $q_k$ in this subspace that we fix for the whole trajectory. For $c\in\{0.5,1,2,3\}$ we use the exact nonminimal solutions
\begin{equation}\label{eq_nyc311_nonminimal_selection}
    x^\star_{c,k}(\theta)
    =\minsol(\theta)
     +c\,\|\minsol(\theta_0)\|_2q_k,
    \qquad k=1,\ldots,10.
\end{equation}
Since $Aq_k=0$, these selections have the same lower-level objective value as \eqref{eq_nyc311_huber_minnorm_qp}.

\textit{Computing the implicit hypergradient}\quad At every $\theta_t$, we solve
\[
    x^\star_\varepsilon(\theta_t)
    =\argmin_x\left\{
       \sum_{i=1}^{250}h_\delta((Ax-\theta_t)_i)
       +\frac{\varepsilon}{2}\|x\|_2^2
    \right\}
\]
along the decreasing grid $\{10^{-1},3\mathbin{\cdot}10^{-2},\ldots,3\mathbin{\cdot}10^{-6},10^{-7}\}$. For each level, $Q_\varepsilon$ is diagonal with entry one when the corresponding residual lies strictly inside the quadratic Huber region and zero in the clipped region. An element of the regularized implicit conservative Jacobian is given by
\[
    V_\varepsilon
    =(A^\top Q_\varepsilon A+\varepsilon I)^{-1}
      A^\top Q_\varepsilon.
\]
We take $Q=Q_{10^{-7}}$ and compute
\begin{equation}\label{eq_nyc311_huber_implicit_jacobian}
    V=(A^\top QA)^\dagger A^\top Q.
\end{equation}
The final three $Q_\varepsilon$ matrices are also compared to record whether the active pattern and derivative have stabilized. In that stable regime, Proposition~\ref{prop_huber_some_limits} certifies $V\in J_\theta\minsol(\theta)$ as the reachable pseudoinverse element.

\textit{Solving the upper-level problem}\quad We apply the same basic template as in \secref{ss_appendix_riboflavin_poisoning} but with the data and hypergradient computation adapted to this setting. Writing $\eta_t=\theta_t-\theta_0$ and taking $\theta_t=\theta_0+\eta_t$ when evaluating the lower solution and hypergradient, and initialize $\eta_0=v_0=0$. With $h_t$ the implicit hypergradient above, the forward-backwards updates mirror those in \eqref{eq:forward-backward}:
\[
    d_t=0.01(4.5)\frac{h_t}{\|h_t\|_2+10^{-12}},\qquad
    v_{t+1}=0.9v_t+d_t,\qquad
    \eta_{t+1}=\Pi(\eta_t+v_{t+1}).
\]

\subsection{Digits LASSO hypercleaning}\label{ss_additional_distillation}

We collect here details about the experiment in \secref{ss_digits_lasso_distillation}.

\textit{Data and splits}\quad The scikit-learn Digits data set has $1797$ observations, each $64$ pixels, and ten classes. For each of five seeds, we split the data into $1078$ samples for the lower-level problem and $359$ for the upper-level. Averaging adjacent horizontal pixels and repeating each average gives a fixed matrix $A\in\mbR^{10\times64}$ with $32$ exact duplicate column pairs. The variable is the synthetic set $\theta\in(\Delta^{10})^{10}$, with $\theta_0=I_{10}$.

\textit{Solving the lower-level problem}\quad With $e^{\lambda}=10^{-3}$, the lower problem is \eqref{eq_digits_lasso_lower}. The positive $\ell_1$ term makes the solution set nonempty and compact even when $\varepsilon=0$, while the duplicate columns guarantee existence of multiple solutions.

\textit{Computing the implicit hypergradient}\quad
For each output class $c$, we use the active set
\[
S_c=\left\{j:\left|(X_N-\gamma\left[\frac{1}{10}A^\top(AX_N-\theta)+\varepsilon X_N\right])_{jc}\right|>\gamma e^\lambda\right\},
\]
taking the derivative of soft-thresholding to be zero at equality. The gradient of the clean upper training loss with respect to the coefficients is
\[
B=\nabla_X L_{\rm tr}(X_N)=\frac{1}{n_{\rm tr}}X_{\rm tr}^\top(X_{\rm tr}X_N-Y_{\rm tr}).
\]
For each class $c$, the active-space adjoint $u_c\in\mathbb R^{|S_c|}$ is obtained from
\[
(\tfrac{1}{10}A_{:,S_c}^\top A_{:,S_c}+\varepsilon I)u_c=B_{S_c,c},\qquad(\widehat g_\theta)_{:,c}=\tfrac{1}{10}A_{:,S_c}u_c.
\]
If $S_c$ is empty, we set $(\widehat g_\theta)_{:,c}=0$. The ten columns form the hypergradient $\widehat g_\theta\in\mathbb R^{10\times10}$ used in forward-backward. Since $\varepsilon>0$, every active system is positive definite and is solved directly.

\textit{Solving the upper-level problem}\quad For $t=0,\ldots,99$, $N$ iterations of forward-backward are applied to the lower-level problem to compute $X_N$ which is used to compute $g_t$, the hypergradient at $(\theta_t,\varepsilon)$, followed by
\[
\theta_{t+1} = \Pi_{(\Delta^{10})^{10}}\left(\theta_t - 0.25\,\frac{g_t}{\max\{\|g_t\|_F,10^{-12}\}} \right).
\]

\begin{table}[htbp]
\centering
\small
\begin{tabular}{@{}rccc@{}}
\toprule
$N$
& Selected $\varepsilon$
& Median validation loss
& IQR \\
\midrule
$1{,}000$
& $3\cdot10^{-4}$
& $0.331981$
& $[0.330044,\;0.341265]$ \\

$10{,}000$
& $10^{-4}$
& $0.329993$
& $[0.329241,\;0.335577]$ \\

$100{,}000$
& $10^{-5}$
& $0.329972$
& $[0.329119,\;0.338587]$ \\

$250{,}000$
& $10^{-6}$
& $0.329275$
& $[0.328927,\;0.336349]$ \\
\bottomrule
\end{tabular}
\caption{Median and IQR (over five seeds) upper-level objective and best $\varepsilon$ from the sampled values after $100$ iterations on the upper-level problem, using $N$ lower-level iterations.}
\label{table_digits_lasso_implicit_minima}
\end{table}

\section{General Theorems}\label{s_classic_results}

\begin{definition}
    [Pseudoinverse \citep{moore_1919, penrose_1955}] \label{def_pseudoinverse} Let $A \in \mbR^{m \times n}$. The pseudoinverse of $A$, denoted by $A^{\dagger}$, is the only matrix in $\mbR^{n \times m}$ verifying 
    \[AA^{\dagger}A = A, \quad A^{\dagger}AA^{\dagger} = A^{\dagger}, \quad (AA^{\dagger})\tp = AA^{\dagger}, \quad (A^{\dagger}A)\tp = A^{\dagger}A\]
    In particular, when $A$ is square nonsingular, $A^{\dagger} = A^{-1}$. Furthermore, if $A$ has SVD (Singular Value Decomposition) $A = U\Sigma V\tp$, then $A^{\dagger} = V \Sigma^{\dagger}U\tp$, where $\Sigma^{\dagger} \in \mbR^{n \times m}$ is defined by
    \[(\Sigma^{\dagger})_{i,j} = 0\ \textnormal{if}\ i \neq j, \quad (\Sigma^{\dagger})_{i,i} = \sigma_i^{\dagger}, \quad  \sigma_i^{\dagger} = \left\{\begin{array}{ll} \sigma_i^{-1} & \textnormal{if}\ \sigma_i > 0,\\ 0 & \textnormal{if}\ \sigma_i = 0.\end{array}\right.\]
\end{definition}
We now present properties of pseudoinverses used throughout, which are straightforwardly verified using the four defining equations of the pseudoinverse and the SVD. 
\begin{lemma}
    [Simple properties of pseudoinverses] \label{lem_pseudoinverse_properties} Let $M \in \mbR^{m \times n}$. Then 
    \begin{itemize}
    \itemsep-0.25em
        \item $(M\ 0)^{\dagger} = (M^{\dagger} ; 0)$ and $(M; 0)^{\dagger} = (M^{\dagger} \ 0)$, $M\tp MM^{\dagger} = M\tp = M^{\dagger} M M\tp$, 
        \item $(MV\tp)^{\dagger} = V M^{\dagger}$ when $V$ is an orthogonal matrix of order $n$ and $(UM)^{\dagger} = M^{\dagger}U\tp$ when $U$ is an orthogonal matrix of order $m$,
        \item $M^{\dagger} = (M\tp M)^{\dagger}M\tp = M\tp (MM\tp)^{\dagger}$.
    \end{itemize}
\end{lemma}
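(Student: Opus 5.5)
These are standard identities, and I would verify each one directly from the four Penrose equations of \defref{def_pseudoinverse} or from the SVD. The SVD route is cleanest. Fix a full SVD $M = U\Sigma V\tp$, so that $M^{\dagger} = V\Sigma^{\dagger}U\tp$. Every claimed identity then reduces to a statement about the rectangular diagonal matrix $\Sigma$, where it can be checked entrywise.

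\textbf{Block formulas.} For $(M\ 0)$ and $(M;0)$ I would take the candidate $(M^{\dagger};0)$, respectively $(M^{\dagger}\ 0)$, and check the four Penrose equations. For example, $(M\ 0)(M^{\dagger};0) = MM^{\dagger}$ is symmetric. Likewise $(M^{\dagger};0)(M\ 0) = \begin{pmatrix} M^{\dagger}M & 0\\ 0 & 0\end{pmatrix}$ is symmetric, and the two product identities follow from $MM^{\dagger}M = M$ and $M^{\dagger}MM^{\dagger} = M^{\dagger}$. Uniqueness of the pseudoinverse then concludes. The vertical case is the transpose of the horizontal one, using $(A\tp)^{\dagger} = (A^{\dagger})\tp$, which itself follows from transposing the Penrose equations.

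\textbf{The identities $M\tp MM^{\dagger} = M\tp = M^{\dagger}MM\tp$.} In the SVD these read $\Sigma\tp\Sigma\Sigma^{\dagger} = \Sigma\tp = \Sigma^{\dagger}\Sigma\Sigma\tp$. The middle products are diagonal, with entries $\sigma_i^2\sigma_i^{\dagger} = \sigma_i$ (this is $0$ when $\sigma_i = 0$). Alternatively, $M\tp MM^{\dagger} = M\tp (MM^{\dagger})\tp = (MM^{\dagger}M)\tp = M\tp$, and symmetrically on the other side.

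\textbf{Orthogonal invariance.} I would check that $VM^{\dagger}$ satisfies the Penrose equations for $MV\tp$, using $V\tp V = I$. For instance, $(MV\tp)(VM^{\dagger}) = MM^{\dagger}$ is symmetric, $(VM^{\dagger})(MV\tp) = V(M^{\dagger}M)V\tp$ is symmetric, and the product identities are immediate. The same check with $U\tp U = I$ handles $(UM)^{\dagger} = M^{\dagger}U\tp$.

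\textbf{Last formula.} Using the SVD, $M\tp M = V(\Sigma\tp\Sigma)V\tp$, so by orthogonal invariance $(M\tp M)^{\dagger} = V(\Sigma\tp\Sigma)^{\dagger}V\tp$. Hence
\[(M\tp M)^{\dagger}M\tp = V(\Sigma\tp\Sigma)^{\dagger}\Sigma\tp U\tp.\]
On the diagonal, $(\sigma_i^2)^{\dagger}\sigma_i = \sigma_i^{\dagger}$, so $(\Sigma\tp\Sigma)^{\dagger}\Sigma\tp = \Sigma^{\dagger}$ and the product equals $M^{\dagger}$. The identity $M\tp(MM\tp)^{\dagger} = M^{\dagger}$ follows in the same way, or by applying the first to $M\tp$ and transposing.

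No step is a genuine obstacle. The only care needed is the rectangular bookkeeping of $\Sigma$ and $\Sigma^{\dagger}$ (sizes $m\times n$ versus $n\times m$) and the convention $0^{\dagger} = 0$ for vanishing singular values.
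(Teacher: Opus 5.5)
Your proposal is correct and matches the paper's route: the paper gives no written proof and only says these identities are ``straightforwardly verified using the four defining equations of the pseudoinverse and the SVD,'' which is what you carry out. Your checks of the block, orthogonal-invariance, and Gram-type identities, including the transpose rule $(A\tp)^{\dagger} = (A^{\dagger})\tp$ and the $0^{\dagger}=0$ convention, are all sound.
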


The next theorem recalls some properties of the \textit{proximal operator} and of the \textit{Moreau envelope} \citep{moreau_1965, bauschke_combettes_2017}.
\begin{theorem}
    [Proximal operator and Moreau envelope]\label{thm_prox} 
    Let $f \in \Gamma_0(\mbR^n)$ and $\gamma > 0$, the proximal operator of $\gamma f$ is denoted and defined by 
    \begin{equation*}
        \prox_{\gamma f}(x) := \underset{y \in \mbR^n}{\argmin} \left(\frac{\|y - x\|_2^2}{2} + \gamma f(y)\right)
    \end{equation*}
    which is uniquely defined by strong convexity of the criterion. 
    The Moreau envelope of $f$ is the $\mC^{1,1}$ function $f^{\gamma} \in \Gamma_0(\mbR^n)$ defined, for $\gamma > 0$, by 
    \[f^{\gamma}(x) = \underset{y \in \mbR^n}{\inf}\left(\frac{\|y - x\|_2^2}{2\gamma} + f(y)\right) = f(\prox_{\gamma f}(x)) + \frac{1}{2\gamma}\|\prox_{\gamma f}(x) - x\|_2^2.\]
    In particular, $f^{\gamma}$ and $f$ have the same minimizers, $f^{\gamma}(x) \leq f(x)$ for all $x \in \mbR^n$ and the gradient of $f^{\gamma}$ is given by $\nabla f^{\gamma}(x) = \frac{x - \prox_{\gamma f}(x)}{\gamma}$.
\end{theorem}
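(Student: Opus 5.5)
The statement to prove is \thmref{thm_prox}: well-definedness of the proximal operator, the facts that $f^\gamma$ is convex and $\mC^{1,1}$, the identity $f^\gamma(x)=f(\prox_{\gamma f}(x))+\|\prox_{\gamma f}(x)-x\|^2/(2\gamma)$, the inequality $f^\gamma\le f$, the equality of minimizers, and the gradient formula. The plan is to derive everything from strong convexity of the inner criterion and the firm nonexpansiveness of $\prox_{\gamma f}$.

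\textbf{Step 1 (well-definedness).} Fix $x$. The function $y\mapsto \|y-x\|^2/2+\gamma f(y)$ is lower semicontinuous and proper, and it is $1$-strongly convex because $f\in\Gamma_0(\mbR^n)$. Since $f$ is minorized by an affine function, the criterion is also coercive. Hence it has a unique minimizer $p:=\prox_{\gamma f}(x)$. The infimum defining $f^\gamma(x)$ is attained at $p$, which gives the stated identity for $f^\gamma$. Taking $y=x$ in the infimum gives $f^\gamma(x)\le f(x)$. Convexity of $f^\gamma$ follows because $(x,y)\mapsto \|y-x\|^2/(2\gamma)+f(y)$ is jointly convex, and a partial infimum of a jointly convex function is convex. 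Finiteness of $f^\gamma$ everywhere makes it proper, and continuity (established in Step 2) gives lower semicontinuity, so $f^\gamma\in\Gamma_0(\mbR^n)$.

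\textbf{Step 2 (gradient and $\mC^{1,1}$).} By Fermat's rule, $p=\prox_{\gamma f}(x)$ if and only if $(x-p)/\gamma\in\partial f(p)$. Monotonicity of $\partial f$ then yields firm nonexpansiveness:
\[
\langle p-p',x-x'\rangle\ge\|p-p'\|^2 .
\]
Next, compare $f^\gamma(x')$ with the candidate value at $y=p$:
\[
f^\gamma(x')-f^\gamma(x)\le\frac{\|p-x'\|^2-\|p-x\|^2}{2\gamma}=\Big\langle \frac{x-p}{\gamma},x'-x\Big\rangle+\frac{\|x'-x\|^2}{2\gamma}.
\]
Exchanging the roles of $x$ and $x'$ gives the reverse inequality with remainder
\[
\Big\langle \frac{(x-p)-(x'-p')}{\gamma},x'-x\Big\rangle-\frac{\|x'-x\|^2}{2\gamma},
\]
which is $O(\|x'-x\|^2)$ because $\prox_{\gamma f}$ is $1$-Lipschitz. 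Therefore $f^\gamma$ is differentiable with $\nabla f^\gamma(x)=(x-p)/\gamma$. This gradient is Lipschitz, since $\mathrm{Id}-\prox_{\gamma f}$ is also firmly nonexpansive and hence $1$-Lipschitz, so $\nabla f^\gamma$ is $1/\gamma$-Lipschitz. This establishes $f^\gamma\in\mC^{1,1}$, and continuity of $f^\gamma$ completes Step 1.

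\textbf{Step 3 (same minimizers).} Since $f^\gamma$ is convex and differentiable, $x$ minimizes $f^\gamma$ if and only if $\nabla f^\gamma(x)=0$, that is, $x=\prox_{\gamma f}(x)$. By Fermat's rule this is equivalent to $0\in\partial f(x)$, that is, $x$ minimizes $f$.

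The main obstacle is Step 2, specifically obtaining the two-sided quadratic remainder. This hinges on firm nonexpansiveness of $\prox_{\gamma f}$, which I will derive first from monotonicity of $\partial f$. Alternatively, I can cite the standard treatment in \citet{bauschke_combettes_2017}, since the result is classical.
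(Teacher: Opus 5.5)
The paper gives no proof of this statement. It is recalled in the appendix as a classical fact, with references to Moreau and to Bauschke--Combettes, which is exactly the fallback you mention at the end. Your self-contained argument is correct and is essentially the standard textbook proof: Fermat's rule for the prox, firm nonexpansiveness from monotonicity of $\partial f$, a two-sided quadratic sandwich for $f^\gamma$, and then $\argmin f^\gamma=\mathrm{Fix}\,\prox_{\gamma f}=\argmin f$. Compared with the bare citation, it makes visible why the Clarke Jacobians of proximal maps, recalled right afterwards and used in the composite section, are symmetric with eigenvalues in $[0,1]$. Firm nonexpansiveness gives the spectral bounds. Your gradient formula, rewritten as $\prox_{\gamma f}=\nabla\bigl(\tfrac12\|\cdot\|_2^2-\gamma f^\gamma\bigr)$, gives symmetry.

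One slip to fix: exchanging $x$ and $x'$ actually gives
\[
f^\gamma(x')-f^\gamma(x)\ \ge\ \Big\langle \frac{x-p}{\gamma},x'-x\Big\rangle+\Big\langle \frac{(x'-p')-(x-p)}{\gamma},x'-x\Big\rangle-\frac{\|x'-x\|^2}{2\gamma},
\]
so the inner product in your remainder has the opposite sign. This is harmless for the $O(\|x'-x\|^2)$ conclusion. With the correct sign, monotonicity of $\mathrm{Id}-\prox_{\gamma f}$ makes that inner product nonnegative, and you get directly $\bigl|f^\gamma(x')-f^\gamma(x)-\langle (x-p)/\gamma,x'-x\rangle\bigr|\le\|x'-x\|^2/(2\gamma)$.

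Two minor points. First, firm nonexpansiveness of $\mathrm{Id}-\prox_{\gamma f}$ deserves its one-line check: expand $\|(x-p)-(x'-p')\|^2$ and use $\|p-p'\|^2\le\langle p-p',x-x'\rangle$. Second, lower semicontinuity of $f^\gamma$ need not wait for Step 2, since a finite convex function on $\mbR^n$ is automatically continuous.
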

\begin{theorem}
    [Clarke Jacobian of the proximal operator {\citep[theorem 3.2]{patrinos_stella_bemporad_2014}}] \label{thm_CF_prox} Let $f \in \Gamma_0(\mbR^n)$ and $\gamma > 0$, for every $P \in \partial (\prox_{\gamma f})(x)$, $P$ is symmetric positive semidefinite and verifies $\|P\| \leq 1$, that is, the eigenvalues of $P$ are in $[0, 1]$. 
\end{theorem}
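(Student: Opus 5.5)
This result is cited from \citet[Theorem~3.2]{patrinos_stella_bemporad_2014}, but it can be reproved from the Moreau-envelope identity in \thmref{thm_prox} together with the definition of the Clarke Jacobian as the convex hull of limits of Jacobians at points of differentiability. The plan is to show that at every point $y\in\mathrm{Diff}(\prox_{\gamma f})$ the Jacobian $\mathrm{Jac}\,\prox_{\gamma f}(y)$ is symmetric with spectrum in $[0,1]$, and then to pass to limits and convex combinations.

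\emph{Step 1 (symmetry).} By \thmref{thm_prox}, $\prox_{\gamma f}(x)=x-\gamma\nabla f^{\gamma}(x)$, and $f^{\gamma}$ is convex and $\mC^{1,1}$. Hence $\prox_{\gamma f}$ is differentiable exactly where $\nabla f^{\gamma}$ is. At such points $\nabla f^{\gamma}$ is differentiable, so $f^{\gamma}$ is twice differentiable there, and its Hessian $\nabla^2 f^{\gamma}(x)$ is symmetric; for twice-differentiable functions this holds even without continuity of the Hessian, as the introduction recalls via \citet{dieudonne1969foundations}. Therefore $\mathrm{Jac}\,\prox_{\gamma f}(x)=I-\gamma\nabla^2 f^{\gamma}(x)$ is symmetric.

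\emph{Step 2 (spectral bounds).} The proximal map is firmly nonexpansive: $\langle p(x)-p(y),x-y\rangle\ge\|p(x)-p(y)\|^2$. This follows from monotonicity of $\partial f$ applied to $(x-p(x))/\gamma\in\partial f(p(x))$. At a point of differentiability, take $y=x+td$ and let $t\to0$. With $P=\mathrm{Jac}\,\prox_{\gamma f}(x)$ this gives $\langle Pd,d\rangle\ge\|Pd\|^2\ge0$ for all $d$. Since $P$ is symmetric, for each eigenpair $(\lambda,d)$ we get $\lambda\ge\lambda^2$, so $\lambda\in[0,1]$. Equivalently, $0\preceq P\preceq I$ and $\|P\|\le1$.

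\emph{Step 3 (closure).} Symmetric matrices with $0\preceq P\preceq I$ form a closed convex set. Limits of such Jacobians along points $y\to x$ stay in this set, and so do their convex hulls. By the Clarke formula, every $P\in\partial(\prox_{\gamma f})(x)$ is therefore symmetric PSD with $\|P\|\le1$.

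The only delicate point is Step 1: we need symmetry of a Hessian that exists merely pointwise. The cited symmetry theorem for twice-differentiable maps handles this. Alternatively, Alexandrov's theorem or the fact that $\nabla f^{\gamma}$ is a gradient would give symmetry almost everywhere, which already suffices for the Clarke construction.
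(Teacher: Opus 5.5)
Your proof is correct. The paper gives no proof of this statement. It imports it as a black box from \citet[Theorem~3.2]{patrinos_stella_bemporad_2014}, so there is no in-paper argument to match, and yours is a valid self-contained replacement.

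As I recall it, the cited source argues at the level of the whole Clarke Jacobian. It writes $\partial(\prox_{\gamma f})(x)=I-\gamma\,\partial(\nabla f^{\gamma})(x)$, which is an equality rather than a mere inclusion because the identity term is smooth. It then gets symmetry from the symmetry of generalized Hessians of the convex $\mC^{1,1}$ function $f^{\gamma}$, and the spectral bounds from the monotonicity and Lipschitz properties of these maps.

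You instead check everything pointwise on $\mathrm{Diff}(\prox_{\gamma f})$ and then close under limits and convex hulls. This avoids both the Clarke sum rule and the generalized-Hessian literature. Your only external inputs are:
\begin{itemize}
\item the pointwise symmetry theorem for twice-differentiable maps, the same fact the introduction attributes to Dieudonn\'e;
\item firm nonexpansiveness, which gives $P\succeq0$ and $P\preceq I$ in one stroke through $P\succeq P^2$.
\end{itemize}
The cited route is shorter once those facts are granted. It also makes explicit the link with generalized Hessians of the Moreau envelope, which is the viewpoint the paper uses elsewhere.

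One caution about your closing remark. Replacing pointwise symmetry by symmetry almost everywhere needs a further, nontrivial fact: the Clarke Jacobian of a vector-valued map is unchanged when a null set of differentiability points is discarded. The pointwise version you actually use in Step~1 is therefore the cleaner one to keep.
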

This supplies symmetry and spectral bounds but not definability and conservativeness. When the proximal mapping is locally Lipschitz and definable, it is path-differentiable, thus its Clarke Jacobian is a conservative mapping. For a parameterized proximal mapping, the analogous conclusion requires these properties jointly in the input and parameter variables.

We now recall specific results when this theory is applied to $f = \|\cdot\|_1$. 
\begin{corollary}
    [Proximal operator of $\|\cdot\|_1$] \label{coro_soft_T} The proximal operator of the 1-norm $\|\cdot\|_1$ is the \textbf{soft-thresholding} operator, given by 
    \[\prox_{\gamma \|\cdot\|_1}(x) = 
    \sign(x)(|x| - \gamma)_+ = \left(\sign(x_i)(\max(|x_i| - \gamma, 0))\right)_{i \in {[1:n]}}.\]
\end{corollary}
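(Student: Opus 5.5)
The corollary reduces, via separability of the $\ell_1$ norm, to a one-dimensional computation: it suffices to identify $\prox_{\gamma|\cdot|}$ on $\mbR$ and then stack coordinates. Since $\|x\|_1=\sum_i|x_i|$ and $\|y-x\|_2^2=\sum_i(y_i-x_i)^2$, the objective defining $\prox_{\gamma\|\cdot\|_1}(x)$ in \thmref{thm_prox} is a sum of functions of the individual $y_i$. Its unique minimizer is therefore obtained coordinatewise, as $\prox_{\gamma|\cdot|}(x_i)$ for each $i\in[1:n]$.

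In dimension one, for fixed $t\in\mbR$, I would characterize the minimizer $s$ of $\frac12(s-t)^2+\gamma|s|$ by Fermat's rule for this strongly convex function: $0\in s-t+\gamma\,\partial|\cdot|(s)$, with $\partial|\cdot|(s)=\{\sign(s)\}$ for $s\neq0$ and $[-1,1]$ for $s=0$. I then split into three cases.
\begin{itemize}
\item If $|t|\le\gamma$, then $s=0$ works, since $t\in\gamma[-1,1]$.
\item If $t>\gamma$, then $s=t-\gamma>0$ satisfies $s-t+\gamma=0$.
\item If $t<-\gamma$, then $s=t+\gamma<0$ works symmetrically.
\end{itemize}
Uniqueness from strong convexity shows these candidates are the minimizers. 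All three cases agree with $\sign(t)\max(|t|-\gamma,0)$, where the case $t=0$ is covered by the convention $\sign(0)=0$.

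Assembling the coordinates gives the displayed vector formula. No step is a real obstacle. The only care needed is to justify the coordinatewise splitting of the argmin, which holds because minimizing a separable sum over a product space is equivalent to minimizing each term. It is also worth checking the boundary cases $|t|=\gamma$, where both expressions give $0$.
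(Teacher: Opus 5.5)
Your argument is correct and is the standard one: separability reduces the claim to the scalar proximal map, and Fermat's rule for the strongly convex scalar objective identifies it case by case. The paper itself gives no proof, recalling the formula as a classical fact and invoking the same separability for \corref{coro_CF_l1}; one small remark is that the convention $\sign(0)=0$ is not actually needed, since $\max(|t|-\gamma,0)=0$ whenever $|t|\le\gamma$, so the product vanishes whatever value is assigned to $\sign(0)$.
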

\begin{corollary}
    [Clarke Jacobian of $\prox_{\gamma\|\cdot\|_1}$] \label{coro_CF_l1} The Clarke Jacobian of $\prox_{\gamma\|\cdot\|_1}$, is given by
    \[\partial \prox_{\gamma \|\cdot\|_1}(x) = \prod_{i=1}^n \partial \prox_{\gamma |\cdot|}(x_i), \ x = (x_i)_{i \in [1:n]} \in \mbR^n \quad \partial \prox_{\gamma |\cdot|}(x_i) = \left\{ \begin{array}{ll} 1 & |x_i| > \gamma, \\ {[}0, 1] & |x_i| = \gamma, \\ 0 & |x_i| < \gamma.\end{array} \right.\]
\end{corollary}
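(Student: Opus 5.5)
The statement is the coordinatewise Clarke Jacobian of soft-thresholding, so the plan is a direct scalar computation followed by separability. By \corref{coro_soft_T}, $\prox_{\gamma\|\cdot\|_1}(x)=(s_\gamma(x_1),\ldots,s_\gamma(x_n))$ with $s_\gamma(t)=\sign(t)\max(|t|-\gamma,0)$. I would first handle $n=1$. The function $s_\gamma$ is piecewise affine: it equals $t-\gamma$ on $]\gamma,+\infty[$, $0$ on $]-\gamma,\gamma[$, and $t+\gamma$ on $]-\infty,-\gamma[$. It is therefore differentiable off $\{\pm\gamma\}$, with derivative $1$ when $|t|>\gamma$ and $0$ when $|t|<\gamma$. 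At such points the Clarke Jacobian is this singleton. At $t=\pm\gamma$ the limiting derivatives from the two sides are $\{0,1\}$, and their convex hull is $[0,1]$. This gives the scalar formula.

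For general $n$, write $\mathrm{Diff}$ of the full map. A point $x$ is a differentiability point exactly when no coordinate satisfies $|x_i|=\gamma$. There the Jacobian is $\diag(s_\gamma'(x_i))$. Along sequences $y\to x$ in $\mathrm{Diff}$, each coordinate $i$ with $|x_i|\neq\gamma$ eventually has the fixed value $s_\gamma'(x_i)$. A coordinate with $|x_i|=\gamma$ can take either value $0$ or $1$, chosen independently for each such coordinate by approaching from the appropriate side. For example, take $y_i=x_i\pm\delta\,\sign(x_i)$, which also keeps all other coordinates away from $\pm\gamma$.

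So the set of limiting Jacobians is $\{\diag(q): q_i=s'_\gamma(x_i)\text{ if }|x_i|\neq\gamma,\ q_i\in\{0,1\}\text{ otherwise}\}$. Its convex hull is the product set in the statement, since the convex hull of a product of finite sets (here, the vertices of a box) is the product of their convex hulls. This identification of $\partial$ with the product of scalar Clarke Jacobians is the one step needing care. In general the Clarke Jacobian of a separable map is only contained in the product, but here the independent choice of approach side gives all vertices, so equality holds.

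There is no real obstacle. The only subtle point is constructing the approximating sequences so that every sign pattern on the tie coordinates is realized simultaneously, which the explicit perturbation above achieves.
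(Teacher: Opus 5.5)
Your proof is correct and follows the reasoning the paper has in mind. The paper states the formula without a detailed proof, citing only the separability of soft-thresholding, and your scalar computation combined with the independent choice of approach side on the tie coordinates supplies exactly that calculation.

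One aside is inaccurate, though it does not affect your argument. For any separable locally Lipschitz map, $\mathrm{Diff}$ is the product of the coordinatewise differentiability sets. The same independent-coordinate argument therefore always gives equality with the product of the scalar Clarke Jacobians. The strict inclusion you recall concerns maps whose components share variables, not separable maps.
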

We now recall properties of Tikhonov regularization \citep{tikhonov_1963}.
\begin{theorem}
    [Hierarchical minimization]\label{thm_Browder} Let $f \in \Gamma_0(\mbR^n)$ with $\mS := \argmin\ f \neq \varnothing$ and $\minsol := \Pi_{\mS}(0) = \argmin_{x \in \mS}\|x\|_2$. For $\varepsilon > 0$, let $\xseps$ be the unique minimizer of $f + \varepsilon \|\cdot\|_2^2/2$. Then, for $\varepsilon > 0$ and $0 < \varepsilon_1 \leq \varepsilon_2$, 
    \[\|\xseps\|_2 \leq \|\minsol\|_2,\qquad \|\xs_{\varepsilon_2}\|_2 \leq \|\xs_{\varepsilon_1}\|_2 \leq \|\minsol\|_2, \qquad \lim_{\etz} \|\xseps - \minsol\|_2 = 0.\]
\end{theorem}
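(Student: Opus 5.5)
The three claims of \thmref{thm_Browder} are the classical Tikhonov estimates, and all follow from comparing optimality inequalities. Write $\varphi_\varepsilon := f + \tfrac{\varepsilon}{2}\|\cdot\|_2^2$, which is $\varepsilon$-strongly convex, lower semicontinuous and proper. Hence $\xseps$ exists and is unique. Since $\mS$ is nonempty, closed and convex, $\minsol = \Pi_{\mS}(0)$ is well defined.

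\emph{First bound.} Comparing $\varphi_\varepsilon(\xseps) \leq \varphi_\varepsilon(\minsol)$ with $f(\minsol) \leq f(\xseps)$ gives
\[\tfrac{\varepsilon}{2}\|\xseps\|_2^2 \leq \tfrac{\varepsilon}{2}\|\minsol\|_2^2 + f(\minsol) - f(\xseps) \leq \tfrac{\varepsilon}{2}\|\minsol\|_2^2.\]
So $\|\xseps\|_2 \leq \|\minsol\|_2$ and, as a by-product, $0 \leq f(\xseps) - f(\minsol) \leq \tfrac{\varepsilon}{2}\|\minsol\|_2^2$.

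\emph{Monotonicity.} Let $0 < \varepsilon_1 \leq \varepsilon_2$. Add the two optimality inequalities $\varphi_{\varepsilon_1}(\xs_{\varepsilon_1}) \leq \varphi_{\varepsilon_1}(\xs_{\varepsilon_2})$ and $\varphi_{\varepsilon_2}(\xs_{\varepsilon_2}) \leq \varphi_{\varepsilon_2}(\xs_{\varepsilon_1})$. The $f$-terms cancel and we are left with
\[(\varepsilon_2 - \varepsilon_1)\big(\|\xs_{\varepsilon_2}\|_2^2 - \|\xs_{\varepsilon_1}\|_2^2\big) \leq 0.\]
If $\varepsilon_1 < \varepsilon_2$ this gives $\|\xs_{\varepsilon_2}\|_2 \leq \|\xs_{\varepsilon_1}\|_2$; if $\varepsilon_1 = \varepsilon_2$ the inequality is trivial. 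Combined with the first bound, this yields the chain of inequalities.

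\emph{Convergence.} This step needs the most care, because we must identify the limit and upgrade weak-type information to norm convergence. The family $(\xseps)$ is bounded by $\|\minsol\|_2$, so take any cluster point $\bar x$ along $\varepsilon_k \downarrow 0$. By lower semicontinuity and the by-product estimate, $f(\bar x) \leq \liminf f(\xs_{\varepsilon_k}) \leq f(\minsol) = \min f$, so $\bar x \in \mS$. Also $\|\bar x\|_2 \leq \|\minsol\|_2$. Since the minimal-norm element of the closed convex set $\mS$ is unique, $\bar x = \minsol$. Every cluster point is therefore $\minsol$, and boundedness in finite dimension gives $\xseps \to \minsol$. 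In infinite dimension the same conclusion would follow from weak convergence together with $\|\xseps\| \leq \|\minsol\|$ (Radon--Riesz), but this is not needed here.
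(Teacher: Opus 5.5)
Your proof is correct and complete. The two optimality comparisons give the norm bound and the monotonicity; cancelling the $f$-terms is legitimate because $\xs_{\varepsilon_1},\xs_{\varepsilon_2}\in\mathrm{dom}\, f$. The by-product estimate $f(\xseps)\le\min f+\tfrac{\varepsilon}{2}\|\minsol\|_2^2$, together with lower semicontinuity, then places every cluster point in $\mS\cap\{x:\|x\|_2\le\|\minsol\|_2\}=\{\minsol\}$. The paper gives no proof of this statement. It only recalls it as a classical fact attributed to Tikhonov and Browder, so your argument supplies the omitted standard proof rather than competing with one.

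For comparison, the classical Browder-type argument uses the optimality inclusion $-\varepsilon\xseps\in\partial f(\xseps)$ instead of function values:
\begin{itemize}
\item Monotonicity of $\partial f$, tested against $0\in\partial f(z)$ for $z\in\mS$, gives $\langle\xseps,\xseps-z\rangle\le0$ for all $z\in\mS$, hence $\|\xseps\|_2\le\|z\|_2$.
\item Any cluster point $\bar x$ lies in $\mS$ by closedness of the graph of $\partial f$.
\item Passing to the limit gives $\langle\bar x,\bar x-z\rangle\le0$ for all $z\in\mS$, which is exactly the variational characterization of $\Pi_{\mS}(0)$.
\end{itemize}
That route identifies the limit directly through the projection inequality and extends to zeros of general maximal monotone operators, which is Browder's setting. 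Yours is more elementary, relying only on objective values and lower semicontinuity. It also yields the quantitative estimate $0\le f(\xseps)-\min f\le\tfrac{\varepsilon}{2}\|\minsol\|_2^2$ as a free by-product.
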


\begin{definition}
    [H\"{o}lderian error bound]\label{def_Holder_function} A proper function $f$ with a nonempty set of minimizers $\mS$ is said to verify a H\"{o}lderian error bound with exponent $p \geq 1$ if there exist constants $r > \min f$ and $\kappa > 0$ such that, when $f(x) < r$, $f(x) - \min f \geq \kappa \mathrm{dist}(x, \mS)^p$.
\end{definition}
\begin{theorem}
    [Rate with Hölderian error bound {\citep[Proposition~4.10]{maulen-soto_fadili_attouch_2025}}]\label{thm_holder_bound} Let $f \in \Gamma_0(\mbR^n)$ have a nonempty set of minimizers $\mS$ and satisfy a Hölderian error bound with exponent $p$ as in \defref{def_Holder_function}. Let $\minsol := \Pi_{\mS}(0)$ and $\xseps$ be the unique minimizer of $f + \varepsilon \|\cdot\|^2/2$, then there exist constants $\bar{\varepsilon}$ and $C_0$ such that 
    \[\forall\ \varepsilon \in ]0, \bar{\varepsilon}],\qquad \|\xseps - \minsol\| \leq C_0 \varepsilon^{\frac{1}{2p}}.\]
\end{theorem}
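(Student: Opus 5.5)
The plan is a direct variational comparison between the regularized minimizer $\xseps$ and $\minsol$. The Hölderian error bound of \defref{def_Holder_function} controls only $\dist(\xseps,\mS)$, so the key step is to convert distance to the solution set into distance to the specific point $\minsol$; this conversion is where the square root, and hence the exponent $\tfrac{1}{2p}$, comes from. If $\minsol=0$, then \thmref{thm_Browder} gives $\|\xseps\|\leq 0$, so $\xseps=0=\minsol$ and there is nothing to prove. Assume from now on that $\minsol\neq0$.

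\emph{Step 1 (function-value gap).} Since $\xseps$ minimizes $f+\tfrac{\varepsilon}{2}\|\cdot\|^2$ and $\minsol$ is admissible,
\[
f(\xseps)-\min f\leq \tfrac{\varepsilon}{2}\big(\|\minsol\|^2-\|\xseps\|^2\big)\leq\tfrac{\varepsilon}{2}\|\minsol\|^2 .
\]
Set $\bar\varepsilon:=2(r-\min f)/(1+\|\minsol\|^2)>0$, with $r$ the threshold of \defref{def_Holder_function}. Then for $\varepsilon\leq\bar\varepsilon$ we have $f(\xseps)<r$, so the error bound applies and gives
\[
\dist(\xseps,\mS)\leq\big(\varepsilon\|\minsol\|^2/(2\kappa)\big)^{1/p}.
\]

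\emph{Step 2 (strong convexity).} The regularized objective is $\varepsilon$-strongly convex and minimized at $\xseps$, so
\[
f(\minsol)+\tfrac{\varepsilon}{2}\|\minsol\|^2\geq f(\xseps)+\tfrac{\varepsilon}{2}\|\xseps\|^2+\tfrac{\varepsilon}{2}\|\minsol-\xseps\|^2 .
\]
Using $f(\minsol)=\min f\leq f(\xseps)$ and dividing by $\varepsilon/2$ yields
\[
\|\minsol-\xseps\|^2\leq\|\minsol\|^2-\|\xseps\|^2 .
\]

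\emph{Step 3 (norm gap from distance to $\mS$).} Let $y:=\Pi_{\mS}(\xseps)$. Since $\minsol$ is the minimal-norm point of $\mS$,
\[
\|\minsol\|\leq\|y\|\leq\|\xseps\|+\dist(\xseps,\mS).
\]
Combining this with $\|\xseps\|\leq\|\minsol\|$ from \thmref{thm_Browder},
\[
\|\minsol\|^2-\|\xseps\|^2=(\|\minsol\|+\|\xseps\|)(\|\minsol\|-\|\xseps\|)\leq 2\|\minsol\|\,\dist(\xseps,\mS).
\]

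\emph{Conclusion.} Chaining Steps 1--3 gives, for all $\varepsilon\in]0,\bar\varepsilon]$,
\[
\|\xseps-\minsol\|^2\leq 2\|\minsol\|\big(\|\minsol\|^2/(2\kappa)\big)^{1/p}\varepsilon^{1/p}.
\]
Taking square roots gives the claim with
\[
C_0:=\Big(2\|\minsol\|\big(\|\minsol\|^2/(2\kappa)\big)^{1/p}\Big)^{1/2}.
\]
The only delicate point is Step 3: the error bound alone would give the rate $\varepsilon^{1/p}$ for $\dist(\xseps,\mS)$, not for $\|\xseps-\minsol\|$. Passing to $\minsol$ through the strong-convexity inequality of Step 2 is what halves the exponent.
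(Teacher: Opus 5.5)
Your proof is correct. The choice of $\bar\varepsilon$ in Step~1 guarantees $f(\xseps)<r$, so the error bound applies. Step~2 is the quadratic-growth inequality of the $\varepsilon$-strongly convex objective at its minimizer. Step~3 correctly combines the minimal-norm property of $\minsol$ in the closed convex set $\mS$ with $\|\xseps\|\le\|\minsol\|$.

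The paper gives no argument of its own for this statement; it simply imports it from \citet[Proposition~4.10]{maulen-soto_fadili_attouch_2025}. Your route is a short, self-contained replacement that uses only the minimizing property of $\xseps$, strong convexity and the projection onto $\mS$. It also has the advantage of producing explicit $\bar\varepsilon$ and $C_0$.

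Your argument can also be sharpened at no cost. If in Step~1 you keep the term $-\|\xseps\|^2$ and insert Step~3, you get $\kappa\,\dist(\xseps,\mS)^p\le\varepsilon\|\minsol\|\,\dist(\xseps,\mS)$. Hence $\dist(\xseps,\mS)=O(\varepsilon^{1/(p-1)})$ and $\|\xseps-\minsol\|=O(\varepsilon^{1/(2(p-1))})$ when $p>1$, while for $p=1$ it forces $\xseps=\minsol$ for all sufficiently small $\varepsilon$. This beats the stated $\varepsilon^{1/(2p)}$. For Least-Squares ($p=2$), it improves the generic $\varepsilon^{1/4}$ quoted in the paper to $\varepsilon^{1/2}$; the exact rate there is $\varepsilon$.
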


\section{Definability of the Tikhonov Construction}\label{s_definability}
We briefly recall definability, then prove that the Tikhonov solutions and regularized Jacobians form definable families under our assumptions. See \citet{vandendries_miller_1996, coste1999introduction, ioffe_2009} for detailed introductions to definability and o-minimal geometry.

\subsection{A Glimpse of Definability} \label{ss_definability_intro}

\begin{definition}
    [o-minimal structure]\label{def_ominimal} An o-minimal structure on $(\mbR, +, \times)$ is a collection $\mO = (\mO_p)_{p \in \mbN}$ such that each $\mO_p$ is a collection of subsets of $\mbR^p$ verifying the following axioms:
\begin{itemize}
\itemsep-0.25em
    \item $\mO_1$ consists exactly of the finite unions of points and open intervals;
    \item $\mO_p$ contains $\mbR^p$ and is stable by complement, finite union, and finite intersection;
    \item $A \in \mO_p \Rightarrow A \times \mbR \in \mO_{p+1}$ and $\mbR \times A \in \mO_{p+1}$;
    \item if $A \in \mO_{p+1}$ and $\pi : \mbR^{p+1} \rightarrow \mbR^p$ is the canonical projection onto $\mbR^p$, then $\pi(A) \in \mO_p$;
    \item $\mO_p$ contains the real algebraic subsets of $\mbR^p$, that is, the sets of the form $\{x\in \mbR^p : g(x) = 0\}$ where $g$ is a polynomial.
\end{itemize}
A subset of $\mbR^p$ is definable if it belongs to $\mO_p$. A function $f : \mbR^n \rightarrow \mbR^m$ is definable if its graph is definable.
\end{definition}

Important examples include semialgebraic sets, which are finite unions of sets of the form
\[\bigcap^k_{i=1} \{x \in \mbR^p : g_i(x) < 0, h_i(x) = 0\}\qquad g_i, h_i\ \textnormal{polynomial}.\]
The logarithm and exponential are also definable in suitable o-minimal structures. Definability is preserved by standard operations, including composition. The definable class therefore contains semialgebraic functions, neural networks built from definable elementary functions, and outputs of backpropagation \citep[Section~5]{bolte_pauwels_2019}.

\subsection{Definability of the Tikhonov Solution Maps}

\begin{proposition}[Definability of the Tikhonov solution maps]\label{prop:def_tikhonov}
Assume \ref{assum:f_reg} and \ref{assum:f_def}. Then $(\theta,\varepsilon)\mapsto\xsepst$ is definable on $\Theta\times\mbR_+^*$. The minimal-norm solution $\minsol$ and the joint selection $\xs$ defined in \secref{ss_Schechtman_limits} are also definable.
\end{proposition}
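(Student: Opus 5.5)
The plan is to write each graph as a first-order formula built from definable data, and then apply closure of definable sets under Boolean operations, products and coordinate projections (\defref{def_ominimal}). All quantifiers will range over finitely many real variables.

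\emph{Step 1: the regularized solutions.} By \ref{assum:f_def}, $f$ and $\Theta$ are definable. Hence $\Theta\times\mbR_+^*$ is definable, and so is the function $(x,\theta,\varepsilon)\mapsto f(x,\theta)+\tfrac{\varepsilon}{2}\|x\|_2^2$, since sums and products of definable functions are definable. Under \ref{assum:f_reg}, $\xsepst$ is the unique minimizer of this strongly convex function. The graph of $(\theta,\varepsilon)\mapsto\xsepst$ is therefore
\[
\{(\theta,\varepsilon,x):\theta\in\Theta,\ \varepsilon>0,\ \forall y\in\mbR^n,\ f(x,\theta)+\tfrac{\varepsilon}{2}\|x\|_2^2\leq f(y,\theta)+\tfrac{\varepsilon}{2}\|y\|_2^2\}.
\]
A universal quantifier is the complement of a projection of a complement, so this set is definable. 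An equivalent description, which avoids quantifying over $y$, is the set of $(\theta,\varepsilon,x)$ with $F(x,\theta)+\varepsilon x=0$. Here we use that $F=\nabla_xf$ is definable, as noted in \ref{assum:f_def}, and that by convexity this equation characterizes the minimizer.

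\emph{Step 2: the minimal-norm solution.} First, the graph of the solution map, $\{(\theta,x):\theta\in\Theta,\ \forall y,\ f(x,\theta)\leq f(y,\theta)\}$, is definable by the same argument. Equivalently, it is $\{(\theta,x):F(x,\theta)=0\}$. Then
\[
\mathrm{graph}(\minsol)=\{(\theta,x):x\in\mS(\theta),\ \forall y\,(y\in\mS(\theta)\Rightarrow\|x\|_2^2\leq\|y\|_2^2)\},
\]
which is again a first-order formula in definable sets. The squared norm is polynomial, hence semialgebraic. This set is the graph of a function because $\mS(\theta)$ is nonempty, closed and convex by \ref{assum:f_reg}, so its minimal-norm element exists and is unique (\thmref{thm_Browder}).

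\emph{Step 3: the joint selection.} The graph of $\xs$ on $\Theta\times\mbR_+$ is the union of the Step 1 graph, which lies over $\varepsilon>0$, and the set $\{(\theta,0,x):(\theta,x)\in\mathrm{graph}(\minsol)\}$. The second set is the product of $\mathrm{graph}(\minsol)$ with $\{0\}$, reordered by a coordinate permutation, and permutations preserve definability. A finite union of definable sets is definable, so $\xs$ is definable.

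\emph{Main obstacle.} There is no deep obstacle here. The only delicate point is bookkeeping: every quantifier must range over $\mbR^n$, never over function spaces, and we must invoke the fact that definable sets are closed under universal quantification, i.e.\ complement, then projection, then complement. We also need that $\|\cdot\|_2^2$ is semialgebraic and that coordinate permutations and products with definable sets preserve definability. Using the equation $F=0$ instead of the $\argmin$ formula shortens Steps 1 and 2, but it relies on definability of $F$, which \ref{assum:f_def} already grants.
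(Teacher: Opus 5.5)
Your proposal is correct and follows essentially the same route as the paper. Both proofs describe each solution graph by a first-order formula: the argmin as the complement of a projection of the strict-decrease set, and the minimal-norm solution as the absence of a solution with smaller norm. Both then obtain the graph of the joint selection as a union. Your optional shortcut through the equation $F(x,\theta)+\varepsilon x=0$ is also valid under the regularity and definability assumptions, though the paper does not need it.
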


\begin{proof}
For $\varepsilon\geq0$, set $f_\varepsilon(x,\theta):=f(x,\theta)+\varepsilon\|x\|_2^2/2$ and consider the full solution graph
\[
\mathcal T:=\left\{(\theta,\varepsilon,x)\in\Theta\times\mbR_+\times\mbR^n:x\in\argmin_y f_\varepsilon(y,\theta)\right\}.
\]
The set $\mathcal A:=\{(\theta,\varepsilon,x,y)\in\Theta\times\mbR_+\times\mbR^n\times\mbR^n:f_\varepsilon(y,\theta)<f_\varepsilon(x,\theta)\}$ is definable, and
\[
\mathcal T=(\Theta\times\mbR_+\times\mbR^n)\setminus\pi_{\theta,\varepsilon,x}(\mathcal A).
\]
Since definable sets are stable under projection and complement, $\mathcal T$ is definable.

For every $\theta\in\Theta$ and $\varepsilon>0$, the regularized objective $f_\varepsilon(\cdot,\theta)$ is $\varepsilon$-strongly convex and therefore has a unique minimizer. Thus
\[
\operatorname{gph}\big((\theta,\varepsilon)\mapsto\xsepst\big)=\mathcal T\cap(\Theta\times\mbR_+^*\times\mbR^n)
\]
is definable.

The unregularized solution correspondence has graph $\{(\theta,x):(\theta,0,x)\in\mathcal T\}$. Its values are nonempty, closed, and convex, so each contains a unique element of minimal norm. The graph of $\minsol$ is
\[
\left\{(\theta,x):(\theta,0,x)\in\mathcal T\ \text{and there is no }y\text{ such that }(\theta,0,y)\in\mathcal T\text{ and }\|y\|_2^2<\|x\|_2^2\right\},
\]
which is definable. Finally, $\operatorname{gph}(\xs)$ is the union of the graph of $(\theta,\varepsilon)\mapsto\xsepst$ on $\varepsilon>0$ and the graph of $\minsol$ at $\varepsilon=0$, hence it is definable as well.
\end{proof}

\subsection{Definability of the Regularized Conservative Jacobians}
\label{ss_def_Jac_nonzero}

We now give the corresponding definability statement for the conservative Jacobians obtained by implicit differentiation. We first recall an elementary fact used in the proof of \thmref{thm_composite_pathdiff} \citep[see][]{bolte_daniilidis_lewis_shiota_2007}.

\begin{lemma}[Definability of Clarke Jacobians]
\label{lem:def_clarke_jac}
Let \(U\subset\mbR^d\) be open and definable, and let
\(G:U\to\mbR^q\) be locally Lipschitz and definable. Then the Clarke Jacobian $\partial G:U\setvalued\mbR^{q\times d}$ has definable graph.
\end{lemma}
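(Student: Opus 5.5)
The plan is to obtain the graph of $\partial G$ from first-order formulas over definable data, and then invoke stability of definable sets under Boolean operations and projections. I will write the Clarke Jacobian as the convex hull of limiting Jacobians, and prove definability in three steps: the set where $G$ is differentiable, the limiting Jacobian map, and the convex hull.

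First, I show that $\mathrm{Diff}(G)$ and the map $x\mapsto \mathrm{Jac}\,G(x)$ on it are definable. A pair $(x,A)\in U\times\mbR^{q\times d}$ satisfies "$G$ is differentiable at $x$ with derivative $A$" exactly when
\[
\forall \eta>0\ \exists \delta>0\ \forall h\ \bigl(\|h\|<\delta \Rightarrow \|G(x+h)-G(x)-Ah\|\leq \eta\|h\|\bigr).
\]
Every ingredient here is definable. $G$ and $U$ are definable, and the norm may be taken Euclidean, whose square is polynomial, so one can compare squares. Replacing each quantifier by a projection or a complement shows that the graph $\Gamma$ of $\mathrm{Jac}\,G$ on $\mathrm{Diff}(G)$ is definable. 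By Rademacher's theorem, $\mathrm{Diff}(G)$ is dense in $U$.

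Second, I define the limiting Jacobian $\partial_L G(x)=\{\lim \mathrm{Jac}\,G(y_k): y_k\to x,\ y_k\in \mathrm{Diff}(G)\}$. Its graph is the set of $(x,A)$ such that
\[
\forall r>0\ \exists (y,B)\in\Gamma\ \bigl(\|y-x\|<r\ \wedge\ \|B-A\|<r\bigr).
\]
This is the closure of $\Gamma$ intersected with $U\times\mbR^{q\times d}$, and it is definable by the same quantifier-elimination argument. Local Lipschitz continuity keeps the values bounded near each point, so each $\partial_L G(x)$ is nonempty and compact.

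Third, I pass to the convex hull. By Carath\'eodory's theorem in $\mbR^{q\times d}$ with $N:=qd$, $V\in\partial G(x)=\conv{\partial_L G(x)}$ if and only if there exist $A_0,\dots,A_N\in\partial_L G(x)$ and weights $\lambda_i\geq0$ with $\sum\lambda_i=1$ and $V=\sum\lambda_iA_i$. This uses only finitely many existential quantifiers over definable conditions, namely membership in the graph from the second step together with polynomial constraints. Hence $\operatorname{gph}\partial G$ is definable.

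The only step that needs care is the second one. Here the closure is taken relative to $U$, not in the ambient space, and the Clarke Jacobian must coincide with the convex hull of this closure. That identification is exactly the definition of $\partial G$ recalled in \secref{ss_PathDiff_ConsJacs}, valid by local Lipschitz continuity. The rest is routine bookkeeping of quantifiers. This matches the argument of \citet{bolte_daniilidis_lewis_shiota_2007}.
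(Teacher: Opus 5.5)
Your argument is correct and matches the source the paper relies on: the paper gives no proof of its own and defers to \citet{bolte_daniilidis_lewis_shiota_2007}, whose reasoning is this same first-order description (definable graph of $\mathrm{Jac}\,G$ on $\mathrm{Diff}(G)$, its closure relative to $U$, then a Carath\'eodory convex hull with $qd+1$ points). One wording fix: what you use in the second step is stability of definable sets under first-order formulas (projections and complements, as you correctly say in the first step), not quantifier elimination, which o-minimal structures need not admit.
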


\begin{proposition}[Definability of the regularized implicit conservative Jacobians]\label{prop_defin_derivative}
Assume \ref{assum:f_reg}, \ref{assum:f_def} and \ref{assum:range}. For $0<\varepsilon\leq\bar\eps$, let $J_{\xseps}$ be the implicit conservative Jacobian in \eqref{eq:regularized_implicit_jacobian}, and let $J_{\xs}(\theta,\varepsilon):=J_{\xseps}(\theta)$ be the associated family defined in \secref{ss_Schechtman_limits}. Then $J_{\xs}$ has definable graph on $O\times]0,\bar\eps]$, and every fixed-$\varepsilon$ slice $J_{\xseps}$ is a conservative Jacobian of $\theta\mapsto\xseps(\theta)$ on $O$.
\end{proposition}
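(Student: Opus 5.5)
The plan has two parts: first show that the graph of $J_{\xs}$ is definable by a first-order description built from definable data, then obtain conservativity of each slice from \thmref{thm_IFT_other}.

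\emph{Definability.} Write the graph of $J_{\xs}$ on $O\times]0,\bar\eps]$ as
\[
\left\{(\theta,\varepsilon,V):\ \exists\,x,H,M\ \text{with}\ (\theta,\varepsilon,x)\in\operatorname{gph}\xs,\ (H\ M)\in J_F(x,\theta),\ (H+\varepsilon I_n)V+M=0\right\}.
\]
Every condition in this description is definable:
\begin{itemize}
\item $\operatorname{gph}\xs$ is definable by \propref{prop:def_tikhonov}.
\item $J_F$ has definable graph by assumption (in \ref{assum:range}); when $J_F=\partial F$ this follows from \lemref{lem:def_clarke_jac}.
\item $O$ and $]0,\bar\eps]$ are definable.
\item The relation $(H+\varepsilon I_n)V+M=0$ is polynomial in the entries.
\end{itemize}
Because $H$ is symmetric PSD and $\varepsilon>0$, the matrix $H+\varepsilon I_n$ is invertible. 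Hence this linear equation is equivalent to $V=-(H+\varepsilon I_n)^{-1}M$, and the set coincides with the graph of \eqref{eq:regularized_implicit_jacobian}. Projecting out $(x,H,M)$ keeps the set definable.

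\emph{Conservativity of a fixed slice.} Fix $\varepsilon\in]0,\bar\eps]$ and apply \thmref{thm_IFT_other} to $F_\varepsilon(x,\theta)=F(x,\theta)+\varepsilon x$ with conservative Jacobian $\{(H+\varepsilon I_n\ M):(H\ M)\in J_F\}$. This is conservative for $F_\varepsilon$ by the sum rule. Its $x$-blocks are invertible wherever the $H$-blocks are PSD.

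The hypotheses of \thmref{thm_IFT_other} require a locally Lipschitz implicit function $G=\xseps$ on an open set. I would establish this as follows.
\begin{itemize}
\item Convexify $J_F$, e.g.\ take its pointwise convex hull, which remains conservative.
\item Convex combinations of symmetric PSD blocks are still symmetric PSD, so after adding $\varepsilon I_n$ all $x$-blocks stay invertible.
\item \citet[Corollary~1]{bolte_le_pauwels_silveti-falls_2021} then gives a locally Lipschitz implicit function near each point. By uniqueness of the strongly convex minimizer, this function coincides with $\xseps$.
\item Hence $\xseps$ is locally Lipschitz on $O$, following the strategy of \citet{bolte_pauwels_silveti-falls_2024} recalled before \thmref{thm_IFT_other}.
\end{itemize}
Applying \thmref{thm_IFT_other} with the original (nonconvex) $J_F$ then yields that $J_{\xseps}$ is a conservative Jacobian on $O$.

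\emph{Main obstacle.} The delicate point is that invertibility is required for every block at the points $(\xsepst,\theta)$, while \ref{assum:range} guarantees the PSD property only along the trajectory. The convexified Jacobian, however, needs invertibility on a neighborhood. I would handle this in two ways:
\begin{itemize}
\item If $J_F=\partial F$, every $x$-block is PSD everywhere by convexity of $f(\cdot,\theta)$, so invertibility holds globally.
\item Otherwise, use the closed graph of $J_F$. Blocks near the trajectory are close to PSD blocks, so $H+\varepsilon I_n$ stays invertible on a small neighborhood depending on $\varepsilon$.
\end{itemize}
A neighborhood depending on $\varepsilon$ is enough, since $\varepsilon$ is fixed in this step.
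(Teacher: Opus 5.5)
Your proof is correct and follows the paper's argument. Definability comes from the same first-order description, namely the projection of a definable set cut out by the polynomial relation $(H+\varepsilon I_n)V+M=0$. Conservativity comes from \thmref{thm_IFT_other} applied to $F_\varepsilon$ along the trajectory, where \ref{assum:range} makes $H_\varepsilon+\varepsilon I_n$ positive definite.

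The only difference is in how local Lipschitz continuity of $\xseps$ is obtained. The paper cites \citet[Theorem~4.2]{bolte_pauwels_silveti-falls_2024}, whereas you unpack it via convexification, \citet[Corollary~1]{bolte_le_pauwels_silveti-falls_2021} and uniqueness of the minimizer. That is exactly the strategy the paper recalls before \thmref{thm_IFT_other}. Your neighborhood concern is harmless: upper semicontinuity of the convexified field carries invertibility from the base point to a small $\varepsilon$-dependent neighborhood, which is what you propose anyway.
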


\begin{proof}
By \propref{prop:def_tikhonov}, $(\theta,\varepsilon)\mapsto\xsepst$ is definable on $O\times]0,\bar\eps]$. Moreover,
\[
D\in J_{\xs}(\theta,\varepsilon)
\quad\Longleftrightarrow\quad
\exists\,(H_\varepsilon\ M_\varepsilon)\in J_F(\xsepst,\theta)\ \text{such that }(H_\varepsilon+\varepsilon I_n)D+M_\varepsilon=0.
\]
By Assumption~\ref{assum:range}, $J_F$ has definable graph, and the matrix equation above is polynomial in $(\varepsilon,H_\varepsilon,M_\varepsilon,D)$. Thus $\operatorname{gph}(J_{\xs})$ is the projection of a definable set and is therefore definable.

For fixed $\varepsilon>0$, the regularized objective is definable and $\varepsilon$-strongly convex. Hence \citet[Theorem~4.2]{bolte_pauwels_silveti-falls_2024} shows that $\xseps$ is path-differentiable, in particular locally Lipschitz. Assumption~\ref{assum:range} makes every $H_\varepsilon+\varepsilon I_n$ along the regularized solutions positive definite, so \thmref{thm_IFT_other} gives \eqref{eq:regularized_implicit_jacobian}. Thus $J_{\xseps}$ is a conservative Jacobian of $\xseps$ on $O$.
\end{proof}

\section{Results from \secref{s_C11}}\label{s_C11_proofs}
This section first justifies the general formula for limit elements then the pseudoinverse case, then details the computations used for the problem with the Huber loss. 
\subsection{Proof of the Residual Formula \thmref{thm_formula_memory}}\label{ss_proof_residual_formula}
\begin{proof}[\thmref{thm_formula_memory}]
    Fix the sequence $\varepsilon_k\downarrow0$ and the blocks $(H_k\ M_k)$ from the theorem. Assumption~\ref{assum:range}, applied with $K=\{\theta\}$, gives a constant $C_\theta$ and matrices $\widehat W_k$ such that $M_k=H_k\widehat W_k$ and $\|\widehat W_k\|\leq C_\theta$. Since $H_k$ is symmetric PSD, $P_k:=H_k^{\dagger}H_k$ is the orthogonal projector onto $\range(H_k)$, and therefore
    \[
        W_k=H_k^{\dagger}M_k=P_k\widehat W_k,
        \qquad \|W_k\|\leq C_\theta,
        \qquad H_kW_k=M_k.
    \]
    The eigenvalues of $R_k=(H_k+\varepsilon_kI_n)^{-1}H_k$ are $\lambda_i(H_k)/(\lambda_i(H_k)+\varepsilon_k)\in[0,1]$. Hence $0\preceq R_k\preceq I_n$, and $V_k=-R_kW_k$ is bounded as well.

    Let $V$ be a cluster point and pass to a subsequence along which $V_k\to V$. By \thmref{thm_Browder}, $x_{\varepsilon_k}^{\star}(\theta)\to\minsol(\theta)$. Using the local boundedness of $J_F$ and the boundedness of $(W_k)_k$ and $(R_k)_k$, we may pass to a further subsequence such that
    \[
        (H_k\ M_k)\to(H\ M),
        \qquad W_k\to W,
        \qquad R_k\to R.
    \]
    The chosen sequence and the closed graph property give $(H\ M)\in\mR(\theta)$, and therefore
    \[
        M=HW,
        \qquad V=-RW,
        \qquad 0\preceq R\preceq I_n.
    \]

    The matrices $H_k$ and $R_k$ commute, and
    \[
        \|H_kR_k-H_k\|
        =\max_i\frac{\varepsilon_k\lambda_i(H_k)}{\lambda_i(H_k)+\varepsilon_k}
        \leq\varepsilon_k.
    \]
    Passing to the limit yields $HR=H=RH$. Thus $R$ acts as the identity on $\range(H)$ and leaves $\ker(H)$ invariant. With $P:=H^{\dagger}H=\Pi_{\range(H)}$, we obtain
    \[
        S:=R-P=\Pi_{\ker H}R\,\Pi_{\ker H},
        \qquad 0\preceq S\preceq\Pi_{\ker H}.
    \]
    Since $H^{\dagger}M=PW$, we conclude that
    \[
        V=-RW=-H^{\dagger}M-SW.
    \]
    Since $V_k\in J_{\xs}(\theta,\varepsilon_k)$, taking $\theta_k\equiv\theta$ in \eqref{eq:J_min} gives $V\in J_{\minsol}(\theta)$. Finally, $M=HW$ and $HS=0$, so
    \[
        HV+M=-HH^{\dagger}M-HSW+M=0.
    \]
\end{proof}
\subsection{Proof of the Pseudoinverse Formula \thmref{thm_formula}}\label{ss_proof_pseudoinverse_formula}
\begin{proof}[\thmref{thm_formula}]
    Fix $(H\ M)\in\mR(\theta)$, and choose $\varepsilon_k\downarrow0$ and $(H_k\ M_k)\in J_F(x_{\varepsilon_k}^{\star}(\theta),\theta)$ realizing this limit. Discarding finitely many terms, we may assume that $0<\varepsilon_k\leq\bar\eps$. Define
    \[
        R_k:=(H_k+\varepsilon_kI_n)^{-1}H_k,
        \qquad W_k:=H_k^{\dagger}M_k,
        \qquad V_k:=-(H_k+\varepsilon_kI_n)^{-1}M_k=-R_kW_k.
    \]
    Assumption~\ref{assum:range}, applied with $K=\{\theta\}$, gives $C_\theta>0$ and matrices $\widehat W_k$ such that $M_k=H_k\widehat W_k$ and $\|\widehat W_k\|\leq C_\theta$. Since $H_k^{\dagger}H_k$ is an orthogonal projector, $W_k=H_k^{\dagger}H_k\widehat W_k$ is bounded and $H_kW_k=M_k$. Passing to a subsequence, we may assume that $W_k\to W$, and hence $M=HW$.

    We claim that $R_k\to H^{\dagger}H$. Let $H_k=\sum_{i=1}^n\lambda_i(H_k)u_{i,k}u_{i,k}\tp$ be a spectral decomposition with $\lambda_1(H_k)\geq\cdots\geq\lambda_n(H_k)\geq0$. Then
    \[
        R_k=\sum_{i=1}^n\frac{\lambda_i(H_k)}{\lambda_i(H_k)+\varepsilon_k}u_{i,k}u_{i,k}\tp.
    \]
    Suppose first that \ref{assum:eigenvalues_gap} holds. For large $k$, we also have $\varepsilon_k\leq\varepsilon_G$. With $P_k:=\Pi_{\range(H_k)}$,
    \[
        \|R_k-P_k\|=\max_{i:\lambda_i(H_k)>0}\frac{\varepsilon_k}{\lambda_i(H_k)+\varepsilon_k}\leq\frac{\varepsilon_k}{\lambda^*}\longrightarrow0,
    \]
    where the maximum is taken to be $0$ if $H_k=0$. The fixed spectral gap and $H_k\to H$ also give, using that the number of eigenvalues of $H_k$ above $\lambda^*$ is eventually identical to $\mathrm{rank}(H)$, $P_k\to\Pi_{\range(H)}=H^{\dagger}H$.

    Suppose instead that \ref{assum:eigenvalues_vanishing} holds, and set $r:=\mathrm{rank}(H)$. The first $r$ eigenvalues of $H_k$ converge to the positive eigenvalues of $H$, while the remaining eigenvalues converge to zero. If $P_k^r$ denotes the spectral projector onto the first $r$ eigenspaces, with $P_k^0:=0$, then
    \[
        \|R_k-P_k^r\|=\max\left\{\max_{i\leq r}\frac{\varepsilon_k}{\lambda_i(H_k)+\varepsilon_k},
        \max_{i>r}\frac{\lambda_i(H_k)}{\lambda_i(H_k)+\varepsilon_k}\right\}\longrightarrow0.
    \]
    We take for convention the maximum over an empty index set to be $0$. The first maximum tends to $0$ because the limiting eigenvalues are positive, while the second tends to $0$ by \ref{assum:eigenvalues_vanishing}. If $r=0$ or $r=n$, then $P_k^r\to\Pi_{\range(H)}$ is immediate because $P_k^0=0$ and $P_k^n=I_n$. If $0<r<n$, the positive and null spectra of $H$ are separated, so the continuity of the corresponding spectral projector gives the same conclusion. Thus $P_k^r\to\Pi_{\range(H)}=H^{\dagger}H$, which proves the claim in both cases.

    It follows that
    \[
        V_k=-R_kW_k\longrightarrow-H^{\dagger}HW=-H^{\dagger}M.
    \]
    Since $V_k\in J_{\xs}(\theta,\varepsilon_k)$ and $x_{\varepsilon_k}^{\star}(\theta)\to\minsol(\theta)$ by \thmref{thm_Browder}, the definition \eqref{eq:J_min} gives $-H^{\dagger}M\in J_{\minsol}(\theta)$. The reachable block $(H\ M)$ was arbitrary, which proves \eqref{formula_eigenvalues_controlled}.
\end{proof}
\subsection{Uniform Bounds for Weighted Gram Systems}\label{ss_proof_weighted_gram}
The proof of \corref{coro_scaled_gram_weighted} requires a general estimate for Gram
systems regularized by a positive diagonal matrix. We state and prove this estimate
first in \lemref{lem_scaled_gram_bound} because it will also be used for the LASSO,
then we prove \corref{coro_scaled_gram_weighted}.

\begin{lemma}[Uniform bound for diagonally regularized Gram systems]
\label{lem_scaled_gram_bound}
Let $r,s\in\mbN$ with $r,s\geq1$ and $B\in\mbR^{r\times s}$, and let $\chi(B)$
denote the quantity defined in \eqref{eq:chi_A} with $A$ replaced by $B$. Then, for
every positive definite diagonal matrix $D\in\mbR^{s\times s}$,
\[
    \left\|(D+B\tp B)^{-1}B\tp\right\|\leq\chi(B).
\]
\end{lemma}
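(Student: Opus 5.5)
We need to bound $\|(D+B\tp B)^{-1}B\tp\|\le\chi(B)$ uniformly over positive diagonal $D$. The plan is to view $X:=(D+B\tp B)^{-1}B\tp$ as the solution operator of a weighted least-squares problem, and to show that for each fixed right-hand side the solution is a convex combination of ``basic'' solutions built from square nonsingular submatrices of $B$. Concretely, for $y\in\mbR^r$, the vector $x=Xy$ is the unique minimizer of
\[
    \tfrac12\|Bx-y\|_2^2+\tfrac12\sum_{j=1}^s d_j x_j^2,
\]
where $D=\diag(d)$ with $d_j>0$.

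The first step is to write $D=\sum_j d_j e_je_j\tp$. Since $D\succ0$, the matrix $D+B\tp B$ is invertible, and the map $d\mapsto Xy$ is a rational function of $d\in\mbR^s_{++}$. I would then use the Cauchy--Binet formula to obtain a closed form. Set $\tilde B:=(B;\sqrt D)\in\mbR^{(r+s)\times s}$ and $\tilde y:=(y;0)$, so that $Xy=\tilde B^{\dagger}\tilde y$, the least-squares solution of a full-column-rank system. For a full-column-rank tall matrix, the classical formula (Ben-Tal--Teboulle, or Berg's theorem) expresses the least-squares solution as
\[
    \tilde B^\dagger\tilde y=\sum_{I}\lambda_I\,\tilde B_{I,:}^{-1}\tilde y_I,
    \qquad \lambda_I=\frac{\det(\tilde B_{I,:})^2}{\sum_J\det(\tilde B_{J,:})^2},
\]
where the sums run over $s$-row subsets $I$ with $\tilde B_{I,:}$ nonsingular. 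This is a convex combination.

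The second step analyzes each term. A row subset $I$ splits into rows $R\subset\{1,\dots,r\}$ from $B$ and rows $C'\subset\{1,\dots,s\}$ from $\sqrt D$. Because the rows of $\sqrt D$ are scaled coordinate vectors, the system $\tilde B_{I,:}z=\tilde y_I$ forces $z_j=0$ for $j\in C'$, and on the complementary columns $C=\{1,\dots,s\}\setminus C'$ (with $|C|=|R|$) it reads $B_{R,C}z_C=y_R$. Nonsingularity of $\tilde B_{I,:}$ is equivalent to nonsingularity of $B_{R,C}$. Hence each basic solution equals $E_C B_{R,C}^{-1}y_R$, where $E_C$ zero-pads, and its norm is at most $\|B_{R,C}^{-1}\|\,\|y\|\le\chi(B)\|y\|$. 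The case $R=C=\emptyset$ gives $z=0$, which is harmless; that case is exactly why $\chi$ only needs $|R|\ge1$, and it also covers $\chi(B)=0$ when $B=0$. By convexity of the norm, $\|Xy\|\le\chi(B)\|y\|$ for every $y$, which proves the lemma.

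The main obstacle is justifying the convex-combination formula for the least-squares solution. I would prove it directly via Cauchy--Binet: expand $\det(\tilde B\tp\tilde B)=\sum_I\det(\tilde B_{I,:})^2$, and apply Cramer's rule to $\tilde B\tp\tilde B\,x=\tilde B\tp\tilde y$, expanding each numerator determinant $\det(\tilde B\tp\tilde B\text{ with column }k\text{ replaced by }\tilde B\tp\tilde y)$ by Cauchy--Binet into $\sum_I\det(\tilde B_{I,:})\det(\tilde B_{I,:}\text{ with column }k\text{ replaced by }\tilde y_I)$. Cramer's rule on each $\tilde B_{I,:}$ then yields the weights $\lambda_I$. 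The remaining care is the bookkeeping showing that the $\sqrt D$ rows only zero out coordinates; the scalings $\sqrt{d_j}$ cancel in the basic solutions because the corresponding right-hand-side entries are $0$.
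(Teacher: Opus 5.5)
Your proof is correct. It uses the same core mechanism as the paper: write the solution as a convex combination of basic solutions, each a zero-padded $B_{R,C}^{-1}y_R$. The difference is that you reach it from the primal side rather than the dual side.

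\emph{The paper's route.} It first applies the push-through identity $(D+B\tp B)^{-1}B\tp=D^{-1}B\tp(I_r+BD^{-1}B\tp)^{-1}$. This recasts $u$ as the first block of the weighted minimal-norm solution of the underdetermined system $(B\ I_r)z=v$ with weight $\diag(D^{-1},I_r)$. It then parametrizes the feasible set by a kernel basis $N$ and cites Ben-Tal--Teboulle for the resulting weighted least-squares problem in $\beta$. Finally it translates nonsingular blocks $N_{\mathcal I,:}$ into bases of $(B\ I_r)$, and these into nonsingular $B_{K^c,\mathcal I_1}$.

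\emph{Your route.} You stack $\tilde B=(B;\sqrt D)$, so that $Xy=\tilde B^\dagger\tilde y$ is an ordinary overdetermined least-squares solution. You prove the Jacobi-type formula with weights $\det(\tilde B_{I,:})^2$ directly by Cauchy--Binet and Cramer. The basic solutions can then be read off at once, since the $\sqrt D$ rows only pin coordinates to zero. Block triangularity gives $\det\tilde B_{I,:}=\pm\det(B_{R,C})\prod_{j\in C'}\sqrt{d_j}$, which justifies your claim that $\tilde B_{I,:}$ is nonsingular exactly when $B_{R,C}$ is.

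\emph{Comparison.} Your argument is shorter and self-contained. It also makes the uniformity in $D$ transparent, because $D$ enters only the weights. In fact the two decompositions coincide: both weight the basic solution indexed by $(R,C)$ proportionally to $\det(B_{R,C})^2\prod_{j\notin C}d_j$. What the paper's dual formulation buys is an interpretation as a uniform bound on weighted pseudoinverses $\Omega A_0\tp(A_0\Omega A_0\tp)^{-1}$ of a full-row-rank matrix. That is what links it to the scaled-projection and $\bar\chi$ literature in \remref{rem_scaled_pinv}. Your version would serve \corref{coro_scaled_gram_weighted} equally well: with $B=\sqrt Q A$ and $y=\sqrt Q u$, the row weights cancel in $B_{R,C}^{-1}y_R$ just as in the paper.
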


\begin{proof}[\lemref{lem_scaled_gram_bound}]
Fix $v\in\mbR^r$ and write $u=(D+B\tp B)^{-1}B\tp v$. We first represent $u$
as part of a weighted minimal-norm solution of a linear system. Note the identity
\begin{equation}\label{eq_push_through}
    (D+B\tp B)^{-1}B\tp
    =D^{-1}B\tp(I_r+BD^{-1}B\tp)^{-1}
\end{equation}
which follows by multiplying both sides by $D+B\tp B$ and rearranging terms. Set
\[
    A_0:=(B\ I_r),
    \qquad
    \Omega:=\diag(D^{-1},I_r).
\]
Then $I_r+BD^{-1}B\tp=A_0\Omega A_0\tp$, and \eqref{eq_push_through}
gives
\[
    u=(I_s\ 0)z^{\star},
    \qquad
    z^{\star}:=\Omega A_0\tp(A_0\Omega A_0\tp)^{-1}v.
\]
Equivalently, $z^{\star}$ is the unique solution of
\begin{equation}\label{eq_weighted_min_norm}
    \min_{z\in\mbR^{s+r}}\ \frac12 z\tp\Omega^{-1}z
    \qquad\text{subject to}\qquad A_0z=v.
\end{equation}

We next use the fact that $z^{\star}$ is a convex combination of the basic
solutions of $A_0z=v$. To see this, let the columns of
$N\in\mbR^{(s+r)\times s}$ form a basis of $\ker(A_0)$, and take
$z_p=(0;v)$ as a particular solution. Every feasible point has the form
$z_p+N\beta$, so \eqref{eq_weighted_min_norm} is a diagonally weighted
least-squares problem in $\beta$. The geometric description of weighted
least-squares solutions in \citet{ben-tal_teboulle_1990} gives, for $\beta^{\star}$ such that $z^{\star} = z_p + N\beta^{\star}$,
\[
    \beta^{\star}\in\mathrm{conv}\left\{
    -(N_{\mI,:})^{-1}(z_p)_\mI:
    |\mI|=s,\ N_{\mI,:}\text{ nonsingular}
    \right\}.
\]
For every such $\mI$, the corresponding point
\[
    \zeta^\mI:=z_p-N(N_{\mI,:})^{-1}(z_p)_\mI
\]
is feasible and vanishes on $\mI$. Moreover, $A_{0,:,\mI^c}$ is nonsingular. Indeed,
otherwise a nonzero vector supported on $\mI^c$ would lie in $\ker(A_0)$, and its
representation $N\beta$ would satisfy $N_{\mI,:}\beta=0$, contradicting the
nonsingularity of $N_{\mI,:}$. Thus $\zeta^\mI$ is the basic solution supported on
$\mI^c$, which is a basis of $A_0$. Since $\beta\mapsto z_p+N\beta$ is affine, we have proved that
\begin{equation}\label{eq_barycentric}
    z^{\star}\in\mathrm{conv}\{z^\mJ:\mJ\text{ is a basis of }A_0\}.
\end{equation}

It remains to bound the first block of each basic solution. Any basis of
$A_0=(B\ I_r)$ can be written as
\[
    \mJ=\mI_1\cup\{s+k:k\in K\},
\]
where $\mI_1\subset[1:s]$, $K\subset[1:r]$, and $|\mI_1|+|K|=r$.
Expanding along the selected identity columns shows that $B_{K^c,\mI_1}$ is
nonsingular. If $z^\mJ=(u';w)$ is the corresponding basic solution, then $u'$ is supported on $\mI_1$, 
\[
    \begin{pmatrix}
        B_{K^c, \mI_1} & I_{K^c, K} \\
        B_{K, \mI_1} & I_{K, K}
    \end{pmatrix} \begin{pmatrix} u' \\ w \end{pmatrix} = \begin{pmatrix} v_{K^c} \\ v_K \end{pmatrix}
\]
and the rows indexed by $K^c$ give $(u')_{\mI_1}=(B_{K^c,\mI_1})^{-1}v_{K^c}$ (since $I_{K^c, K} = 0$). Consequently,
\[
    \|(I_s\ 0)z^\mJ\|\leq\chi(B)\|v\|.
\]
This also covers $\mI_1=\emptyset$, when the first block is zero. Combining this
bound with \eqref{eq_barycentric} yields
\[
    \|(D+B\tp B)^{-1}B\tp v\|
    =\|(I_s\ 0)z^{\star}\|
    \leq\chi(B)\|v\|,
\]
which proves the lemma.
\end{proof}

\begin{proof}[\corref{coro_scaled_gram_weighted}]
Fix $u\in\mbR^m$, and set $B:=\sqrt{Q}A$ and $y:=\sqrt{Q}u$. Then
\[
    (\delta I_n+A\tp QA)^{-1}A\tp Qu
    =(\delta I_n+B\tp B)^{-1}B\tp y.
\]
The proof of \lemref{lem_scaled_gram_bound}, with $D=\delta I_n$, represents the
right-hand side as the first block of a convex combination of basic solutions of
$(B\ I_m)z=y$. Consider one such basic solution, corresponding to
\[
    \mJ=\mI_1\cup\{n+k:k\in K\}.
\]
Its first block is supported on $\mI_1$, and the rows indexed by $K^c$ give
\[
    (z^\mJ)_{\mI_1}=(B_{K^c,\mI_1})^{-1}y_{K^c}.
\]
Here $|\mI_1|=|K^c|$. Nonsingularity of
$B_{K^c,\mI_1}=\sqrt{Q_{K^c}}A_{K^c,\mI_1}$ implies that $Q_{K^c}$ is positive
definite and $A_{K^c,\mI_1}$ is nonsingular. Since
$y_{K^c}=\sqrt{Q_{K^c}}u_{K^c}$, the matching row weights cancel:
\[
    (z^\mJ)_{\mI_1}
    =\left(\sqrt{Q_{K^c}}A_{K^c,\mI_1}\right)^{-1}
      \sqrt{Q_{K^c}}u_{K^c}
    =(A_{K^c,\mI_1})^{-1}u_{K^c}.
\]
Thus the first block of every basic solution has norm at most
$\chi(A)\|u\|$; when $\mI_1=\emptyset$, it is zero. The same bound holds for
their convex combination, proving
\[
    \left\|(\delta I_n+A\tp QA)^{-1}A\tp Q\right\|\leq\chi(A).
\]

For fixed $Q$, an SVD of $B=\sqrt{Q}A$ gives
\[
    (\delta I_n+B\tp B)^{-1}B\tp\sqrt{Q}
    \longrightarrow B^{\dagger}\sqrt{Q}
    \qquad\text{as }\delta\downarrow0.
\]
Passing to the limit in the first estimate proves the second one.
\end{proof}

\begin{remark}[Scaled projections and pseudoinverses]\label{rem_scaled_pinv}
The estimate in \lemref{lem_scaled_gram_bound} uses arguments common to results on scaled projections and pseudoinverses. For a full row rank matrix $A_0$, the weighted pseudoinverses $\Omega A_0\tp(A_0\Omega A_0\tp)^{-1}$ are bounded uniformly over positive definite diagonal matrices $\Omega$ \citep{stewart_1989,oleary_1990,forsgren_1996}. This is used in the condition measure $\bar\chi_{A_0}$ often employed for complexity analysis of interior-point methods \citep{vavasis_ye_1996}. Such condition measures can be large but we only need a uniform bound. To the best of our knowledge, \lemref{lem_scaled_gram_bound} does not already appear somewhere in this literature, although the underlying arguments are not unknown.
\end{remark}

\subsection{Path-differentiability for Huber Regression \corref{coro_Huber_app}}\label{ss_proof_huber_pathdiff}
\begin{proof}[\corref{coro_Huber_app}]
    Let $f(x,\theta):=\phi(Ax-\theta)$. The Huber loss is finite, convex, semialgebraic, and $\mC^{1,1}$. Moreover, $h(t)\geq |t|-1/2$, so $\phi$ is coercive. Minimizing $\phi$ over the closed affine space $\range(A)-\theta$ therefore gives a solution of \eqref{H} for every $\theta\in\mbR^m$. Since $\nabla\phi$ is $1$-Lipschitz,
    \[
        \|F(x,\theta)-F(x',\theta')\|\leq\|A\|\bigl(\|A\|\|x-x'\|+\|\theta-\theta'\|\bigr),
    \]
    and Assumptions \ref{assum:f_reg} and \ref{assum:f_def} hold with $\Theta=\mbR^m$. The chain-rule construction and calculation in \secref{ss_Huber} verify Assumption~\ref{assum:range} globally for any $\bar\eps>0$, with the uniform bound $\|W_{\varepsilon}\|\leq\chi(A)$. The conclusion follows from \thmref{thm_general}.
\end{proof}

\vspace{-0.2cm}

\subsection{Proof of the Huber Limiting-Derivative Formula \propref{prop_huber_some_limits}}\label{ss_proof_huber_limits}
\begin{proof}[\propref{prop_huber_some_limits}]
    Choose $\varepsilon_k\downarrow0$ and $Q_{\varepsilon_k}\to Q$ as in the definition of $\mR^{\mathrm{Hub}}(\theta)$, and write $y_k:=Ax_{\varepsilon_k}^{\star}(\theta)-\theta$. The coordinatewise description of $\partial(\nabla\phi)$ shows that, for all large $k$, the constant matrix $Q$ itself belongs to $\partial(\nabla\phi)(y_k)$. If $Q_{ii}\in]0,1[$, then eventually $(Q_{\varepsilon_k})_{ii}\in]0,1[$ and hence $|(y_k)_i|=1$. If $Q_{ii}=0$, then eventually $(Q_{\varepsilon_k})_{ii}<1$ and hence $|(y_k)_i|\geq1$. Finally, if $Q_{ii}=1$, then eventually $(Q_{\varepsilon_k})_{ii}>0$ and hence $|(y_k)_i|\leq1$.

    Set $B:=\sqrt{Q}A$. For all large $k$, we may therefore select
    \[
        V_k:=(\varepsilon_kI_n+B\tp B)^{-1}B\tp\sqrt{Q}
        \in J_{\xs}(\theta,\varepsilon_k).
    \]
    An SVD of $B$ gives $V_k\to B^{\dagger}\sqrt{Q}$. Together with $x_{\varepsilon_k}^{\star}(\theta)\to\minsol(\theta)$, the definition \eqref{eq:J_min} yields $B^{\dagger}\sqrt{Q}\in J_{\minsol}(\theta)$. Finally, since $B\tp B=A\tp QA$, $B\tp\sqrt{Q}=A\tp Q$, and $(B\tp B)^{\dagger}B\tp=B^{\dagger}$ (see \lemref{lem_pseudoinverse_properties}), we have
    \[
        B^{\dagger}\sqrt{Q}=(A\tp QA)^{\dagger}A\tp Q,
    \]
    which proves the claim.
\end{proof}

\section{Results from \secref{s_involved_pbs}}\label{s_composite}
We recall problem \eqref{pb_composite}, its Tikhonov regularization \eqref{pb_composite_epsilon}, and the fixed-point formulation
\eqref{composite_fixed_point}. For $D=(Q,P,H^f,M^f)\in\mD_\varepsilon(x,\theta)$, the blocks of the conservative Jacobian \eqref{composite_constructed_field} of the residual
$\Phi_\varepsilon^\gamma$ and the reduced operators of
\eqref{composite_Sigma_V} are, with $\Pi_Q=QQ^{\dagger}$
\[
\begin{aligned}
    \Heps(D)&=I_n-Q\big((1-\gamma\varepsilon)I_n-\gamma H^f\big),
    &\Meps(D)&=\gamma QM^f-P,\\
    \Sigma(D)&=Q^\dagger-\Pi_Q+\gamma\Pi_QH^f\Pi_Q,
    &V(D)&=Q^\dagger\Meps(D)
        -\gamma\Pi_QH^f(I_n-\Pi_Q)\Meps(D),\\
    T_\varepsilon(D)&=\Sigma(D)+\gamma\varepsilon\Pi_Q.
\end{aligned}
\]
The subsections below follow the order of the
results in \secref{s_involved_pbs}: the regularized conservative Jacobian,
the LASSO uniform derivative bound, projected inversion, composite
path-differentiability and cluster-point formulas, and the reachable binary
LASSO derivatives.

\subsection{Path-differentiability of Regularized Composite Problems \propref{prop_composite_regularized}}
\label{ss_proof_composite_regularized}

\begin{proof}[\propref{prop_composite_regularized}]
Fix $0<\varepsilon\leq\bar\eps$. The smooth function
\[
    x\longmapsto f(x,\theta)+\frac{\varepsilon}{2}\|x\|_2^2
\]
is $\varepsilon$-strongly convex and has an $(L+\varepsilon)$-Lipschitz gradient. The Tikhonov term and its unique minimizer are definable, and the joint Clarke Jacobians are definable by \lemref{lem:def_clarke_jac}; hence all the objects in the fixed-point construction and in \eqref{composite_regularized_jacobian} are definable.
Moreover, the fixed stepsize satisfies
\[
    0<\gamma<\frac{2}{L+2\bar\eps}<\frac{2}{L+\bar\eps}
    \leq\frac{2}{L+\varepsilon}.
\]
The gradient and the parameterized proximal map are therefore jointly path-differentiable. The stepsize argument of \citet[Remark~3.6]{bolte_pauwels_silveti-falls_2024} applies in the same way when strong convexity lies in the smooth term. Thus \citet[Theorem~3.5]{bolte_pauwels_silveti-falls_2024}, applied componentwise on $O$, gives the path-differentiability of $\theta\mapsto\xsepst$ and exactly the definable conservative Jacobian in \eqref{composite_regularized_jacobian}.
\end{proof}

\subsection{Proof of the LASSO Uniform Derivative Bound \propref{prop_LASSO_unif_bound}}
\label{ss_proof_LASSO_uniform_bound}
We use the problem notation introduced in \secref{ss_LASSO}.

\begin{proof}[\propref{prop_LASSO_unif_bound}]
    If $Q = 0$ the matrix is zero and the bound is trivial. Otherwise, up to permuting the coordinates of $x$ (which multiplies the considered matrix on the left by a permutation matrix and permutes the columns of $A$, changing neither the operator norm nor $\chi(A)$), assume $Q = \diag(Q', 0)$ with $Q' \succ 0$ of order $q \geq 1$, and write $A_1 := A_{:,1:q}$, $A_2 := A_{:,q+1:n}$. Then
    \[I_n - Q + \gamma Q(A\tp A + \varepsilon I_n) = \begin{pmatrix} I_q - Q' + \gamma\varepsilon Q' + \gamma Q'A_1\tp A_1 & \gamma Q'A_1\tp A_2 \\ 0 & I_{n-q}\end{pmatrix}, \gamma QA\tp = \begin{pmatrix} \gamma Q'A_1\tp \\ 0\end{pmatrix},\]
    so that, by blockwise inversion of the upper triangular matrix,
    \[\left(I_n - Q(I_n - \gamma (A\tp A + \varepsilon I_n))\right)^{-1}\gamma Q A\tp = \begin{pmatrix} \left(I_q - Q' + \gamma\varepsilon Q' + \gamma Q'A_1\tp A_1\right)^{-1}\gamma Q'A_1\tp \\ 0 \end{pmatrix}.\]
    Factoring $\gamma Q' \succ 0$,
    \[I_q - Q' + \gamma\varepsilon Q' + \gamma Q'A_1\tp A_1 = \gamma Q'\left(D_{\varepsilon} + A_1\tp A_1\right), \qquad D_{\varepsilon} := \frac{(Q')^{-1} - I_q}{\gamma} + \varepsilon I_q,\]
    so that the nonzero block equals $(D_{\varepsilon} + A_1\tp A_1)^{-1}A_1\tp$. The entries of $Q'$ lying in $]0, 1]$ and $\varepsilon$ being positive, $D_{\varepsilon}$ is diagonal positive definite, and \lemref{lem_scaled_gram_bound} applied with $B = A_1$ bounds the norm of this block, hence of the whole matrix, by $\chi(A_1) \leq \chi(A)$, the last inequality holding because every square submatrix of $A_1$ is a square submatrix of $A$. Finally, by \corref{coro_LASSO_setup} and \corref{coro_CF_l1}, every element of $J_{\xseps}(\theta)$ is of this form for some $Q \in \diag([0,1]^n)$, whence, for every compact $K$ and every $0<\varepsilon_0\leq\bar\eps$, $\sup\{\|J\| : J \in J_{\xseps}(\theta),\ 0<\varepsilon \leq \varepsilon_0,\ \theta \in K\} \leq \chi(A)$, which is condition $(iv)$ of \thmref{thm_Schechtman}.
\end{proof}

\subsection{Proof of Projected-Inversion \lemref{lem_projected_inversion}}
\label{ss_proof_projected_inversion}

\begin{proof}[\lemref{lem_projected_inversion}]
Since $Q$ is symmetric, $\Pi=QQ^\dagger$ is the orthogonal projector onto $\range(Q)$ and
\[
    Q^\dagger=\Pi Q^\dagger\Pi,
    \qquad
    Q^\dagger Q=QQ^\dagger=\Pi.
\]
The nonzero eigenvalues of $Q$ belong to $]0,1]$, so the eigenvalues of $Q^\dagger-\Pi$ on $\range(Q)$ are of the form $q^{-1}-1\geq0$. Together with $H^f\succeq0$, this proves that $\Sigma$ is symmetric positive semidefinite. It also gives $\Pi\Sigma=\Sigma\Pi=\Sigma$.

The matrix $T_\varepsilon=\Sigma+\gamma\varepsilon\Pi$ is symmetric and vanishes on $\ker(Q)$. For every $u\in\range(Q)$,
\[
    u^\top T_\varepsilon u
    =u^\top\Sigma u+\gamma\varepsilon\|u\|^2
    \geq\gamma\varepsilon\|u\|^2.
\]
It is therefore positive definite on $\range(Q)$, and
\[
    \range(T_\varepsilon)=\range(Q),
    \qquad
    T_\varepsilon^\dagger T_\varepsilon
    =T_\varepsilon T_\varepsilon^\dagger=\Pi.
\]

We next use the following identities (recall that $(I_n - \Pi)Q = 0$)
\[
    (I_n-\Pi)H=I_n-\Pi,
    \qquad
    Q^\dagger H
    =T_\varepsilon+\gamma\Pi H^f(I_n-\Pi).
\]
If $HX=0$, the first identity gives $(I_n-\Pi)X=0$, while the second gives $T_\varepsilon\Pi X=0$. The positive definiteness of $T_\varepsilon$ on $\range(Q)$ implies $\Pi X=0$, and hence $X=0$. Thus $H$ is invertible. For $X=H^{-1}M$, the same identities yield
\[
    (I_n-\Pi)X=(I_n-\Pi)M,
    \qquad
    T_\varepsilon\Pi X
    =Q^\dagger M-\gamma\Pi H^f(I_n-\Pi)M=V.
\]
Multiplying the second equality by $T_\varepsilon^\dagger$ proves \eqref{projected_inversion_components} and the resulting decomposition of $H^{-1}M$.

Finally, on an eigenvector of $\Sigma|_{\range(Q)}$ with eigenvalue $\lambda\geq0$, the matrix $T_\varepsilon^\dagger\Sigma$ acts by
\[
    \frac{\lambda}{\lambda+\gamma\varepsilon}\in[0,1).
\]
It vanishes on $\ker(Q)$, so $\|T_\varepsilon^\dagger\Sigma\|\leq1$. If $\range(V)\subseteq\range(\Sigma)$, then $V=\Sigma\Sigma^\dagger V$ since $\Sigma\Sigma^\dagger$ is a projector. Therefore
\[
    \|T_\varepsilon^\dagger V\|
    =\|T_\varepsilon^\dagger\Sigma\Sigma^\dagger V\|
    \leq\|\Sigma^\dagger V\|.
\]
The decomposition of $H^{-1}M$ and the triangle inequality give the second bound in \eqref{projected_inversion_bound}. If $V=\Sigma W$, then $\Sigma^\dagger V=\Sigma^\dagger\Sigma W$ is the orthogonal projection of $W$ onto $\range(\Sigma)$, so $\|\Sigma^\dagger V\|\leq\|W\|$.
\end{proof}
 
\subsection{Proof of Composite Path-Differentiability \thmref{thm_composite_pathdiff}}
\label{ss_proof_composite_pathdiff}

\begin{proof}[\thmref{thm_composite_pathdiff}]
Set
\[
h(x,\theta):=f(x,\theta)+g(x,\theta).
\]
Under \ref{assum:f_c}, each $h(\cdot,\theta)$ is proper, closed, convex, and has a nonempty set of minimizers. Moreover, $h$ is definable. The graph of its Tikhonov minimizers is definable because it can be described by the absence of a point with strictly smaller objective value. For $\varepsilon>0$, strong convexity of $h + \varepsilon\|\cdot\|^2/2$ makes this minimizer unique. The graph of the minimal-norm selection is likewise definable by minimizing $\|x\|$ over $\argmin h(\cdot,\theta)$. Consequently, the joint selection
\[
\xs(\theta,\varepsilon)
=
\begin{cases}
\xsepst, & \varepsilon>0,\\
\minsol(\theta), & \varepsilon=0,
\end{cases}
\]
is definable on $O\times[0,\bar\eps]$.

For every fixed $0<\varepsilon\leq\bar\eps$,
\propref{prop_composite_regularized} supplies the definable conservative
Jacobian \eqref{composite_regularized_jacobian}. Thus the assembled mapping
\[
(\theta,\varepsilon)\longmapsto J_{\xseps}(\theta)
\]
is definable on $O\times]0,\bar\eps]$.

It remains to establish a bound uniform in $\varepsilon$. Fix a compact set
$K\subset O$, let $\theta\in K$, $\varepsilon\in]0,\bar\eps]$, and take
$D\in\mD_\varepsilon(\xsepst,\theta)$. By \ref{assum:range_c}, the columns
of $V(D)$ belong to $\range(\Sigma(D))$ and
$\|\Sigma(D)^\dagger V(D)\|\leq C_K$. The estimate
\eqref{projected_inversion_bound} therefore gives
\[
\|\Heps(D)^{-1}\Meps(D)\|
\leq
\|(I_n-\Pi_Q)\Meps(D)\|+\|\Sigma(D)^\dagger V(D)\|.
\]
Using \ref{assum:m_term_c}, we therefore obtain
\[
\sup_{\substack{\theta\in K,\;0<\varepsilon\leq\bar\eps\\
                 J\in J_{\xseps}(\theta)}}
\|J\|
\leq B_K+C_K.
\]

For each fixed $\theta$, \thmref{thm_Browder} applied to
$h(\cdot,\theta)\in\Gamma_0(\mbR^n)$ gives
\[
\xsepst\longrightarrow\minsol(\theta)
\qquad\text{as }\varepsilon\downarrow0.
\]
Once the uniform bound above and this pointwise convergence are available,
the part of the proof of \thmref{thm_loc_bound_deriv_imply_cont} following
\eqref{eq:bounded_derivates} uses only these two facts. It therefore applies
here with the bound $B_K+C_K$, showing that $\minsol$ is locally Lipschitz
and that the joint selection $\xs$ is continuous at $O\times\{0\}$.
Compactness then gives $\xseps\to\minsol$ uniformly on compact subsets of~$O$.

We have now verified the four hypotheses of \thmref{thm_Schechtman}.
Therefore $J_{\minsol}$ is a definable conservative Jacobian of $\minsol$,
which is consequently path-differentiable on $O$.
\end{proof}
\subsection{Rank-changing Composite Cluster Points
\propref{prop_composite_cluster}}
\label{ss_proof_composite_cluster}

\begin{proof}[\propref{prop_composite_cluster}]
By \thmref{thm_Browder},
$\xs_{\varepsilon_k}(\theta)\to\minsol(\theta)$. The definition of
$z_{\varepsilon_k}$ and the closed graph property of the two Clarke
Jacobians in $\mD_{\varepsilon_k}$ therefore give
\[
    D_0\in\mD_0(\minsol(\theta),\theta),
    \qquad \mathcal M_k\to\mathcal M_0.
\]
For the selected blocks, \ref{assum:range_c} with
$K=\{\theta\}$ gives a constant $C_\theta$ such that
\[
    V_k=\Sigma_kW_k,\qquad
    W_k:=\Sigma_k^\dagger V_k,\qquad
    \|W_k\|\leq C_\theta.
\]
The matrices $\Sigma_k$ and $\Pi_k$ commute, and both vanish on
$\ker(Q_k)$. On $\range(Q_k)$, the eigenvalues of
\[
    R_k:=T_k^\dagger\Sigma_k
\]
are $\lambda/(\lambda+\gamma\varepsilon_k)$, where $\lambda$ ranges over
the eigenvalues of $\Sigma_k$ on this subspace. Hence
\[
    0\preceq R_k\preceq\Pi_k,\qquad
    T_k^\dagger V_k=R_kW_k,\qquad
    \|T_k^\dagger V_k\|\leq\|W_k\|.
\]
Together with \ref{assum:m_term_c} and
\eqref{comp_subspace_decomposition}, this proves that $(J_k)_k$ is bounded.
Every cluster point of $(J_k)_k$ belongs to
$J_{\minsol}(\theta)$ by \eqref{eq:J_min}.

Fix a subsequence along which $J_k\to J$. The sequences $(\Pi_k)_k$ and
$(T_k^\dagger V_k)_k$ are bounded, so a further subsequence satisfies
$\Pi_k\to\bar\Pi$ and $T_k^\dagger V_k\to\bar Y$. A limit of orthogonal
projectors is an orthogonal projector. Since
\[
    \Pi_kQ_k=Q_k\Pi_k=Q_k,\qquad
    \Pi_kT_k^\dagger V_k=T_k^\dagger V_k,
\]
passing to the limit yields
\[
    \bar\Pi Q_0=Q_0\bar\Pi=Q_0,\qquad \bar\Pi\bar Y=\bar Y.
\]
The first two identities imply
$\range(Q_0)\subseteq\range(\bar\Pi)$. Passing to the limit in
\eqref{comp_subspace_decomposition} gives
\[
    J=-(I_n-\bar\Pi)\mathcal M_0-\bar Y,
\]
which proves the claim.
\end{proof}

\subsection{Stable-\texorpdfstring{$Q$}{Q} Residual Formula
\corref{coro_composite_stable_Q}}
\label{ss_proof_composite_stable_Q}

\begin{proof}[\corref{coro_composite_stable_Q}]
Fix a subsequence along which $J_k\to J$. Since
$\operatorname{rank}(Q_k)=\operatorname{rank}(Q_0)$ eventually, continuity
of the pseudoinverse on matrices of fixed rank gives
\[
    Q_k^\dagger\to Q_0^\dagger,\qquad
    \Pi_k\to\Pi_0,\qquad
    \Sigma_k\to\Sigma_0,\qquad
    V_k\to V_0.
\]
Assumption \ref{assum:range_c} makes
$W_k:=\Sigma_k^\dagger V_k$ bounded. Set
$R_k:=T_k^\dagger\Sigma_k$. Its eigenvalues on $\range(Q_k)$ are
$\lambda/(\lambda+\gamma\varepsilon_k)$, so
$0\preceq R_k\preceq\Pi_k$. After taking a further subsequence, there are
matrices $W$ and $R$ such that
\[
    W_k\to W,\qquad R_k\to R.
\]
The identities $V_k=\Sigma_kW_k$ and
$T_k^\dagger V_k=R_kW_k$, together with
\eqref{comp_subspace_decomposition}, imply
\[
    V_0=\Sigma_0W,\qquad
    J=-(I_n-\Pi_0)\mathcal M_0-RW.
\]
Moreover, $0\preceq R\preceq\Pi_0$. Since $R_k$ commutes with $\Sigma_k$
and
\[
    \|\Sigma_k-\Sigma_kR_k\|
    =\max_{\lambda\in\operatorname{spec}(\Sigma_k)}
      \frac{\gamma\varepsilon_k\lambda}
           {\lambda+\gamma\varepsilon_k}
    \leq\gamma\varepsilon_k,
\]
we obtain
\[
    \Sigma_0R=R\Sigma_0=\Sigma_0.
\]
Multiplying these identities by $\Sigma_0^\dagger$ shows that
$P_\Sigma R=RP_\Sigma=P_\Sigma$. Therefore
$S:=R-P_\Sigma$ satisfies
\[
    0\preceq S\preceq\Pi_0-P_\Sigma,\qquad
    \Sigma_0S=S\Sigma_0=0.
\]
Finally,
\[
    RW=P_\Sigma W+SW
      =\Sigma_0^\dagger V_0+SW,
\]
which proves \eqref{composite_cluster_residual}.
\end{proof}

\subsection{Pseudoinverse Cluster Formula
\corref{coro_composite_pseudoinverse}}
\label{ss_proof_composite_pseudoinverse}

\begin{proof}[\corref{coro_composite_pseudoinverse}]
Fix a cluster point $J$ and the subsequence and limits supplied by
\corref{coro_composite_stable_Q}. With
$P_\Sigma:=\Sigma_0^\dagger\Sigma_0$, we have $S=R-P_\Sigma$, so it is
enough to prove that $R_k\to P_\Sigma$.

Under $(\mathrm{G}_\Sigma)$, let
$P_{\Sigma,k}:=\Sigma_k^\dagger\Sigma_k$. The spectral description in the
proof of \corref{coro_composite_stable_Q} gives
\[
    \|R_k-P_{\Sigma,k}\|
    \leq\frac{\gamma\varepsilon_k}
                  {\lambda^*+\gamma\varepsilon_k}
    \longrightarrow0.
\]
The spectral gap and $\Sigma_k\to\Sigma_0$ also give
$P_{\Sigma,k}\to P_\Sigma$, hence $R_k\to P_\Sigma$.

Under $(\mathrm{V}_\Sigma)$, let $r:=\operatorname{rank}(\Sigma_0)$. The spectral
projector associated with the $r$ largest eigenvalues of $\Sigma_k$
converges to $P_\Sigma$, and the corresponding multipliers
$\lambda_i(\Sigma_k)/(\lambda_i(\Sigma_k)+\gamma\varepsilon_k)$ converge
to one. Every remaining eigenvalue tends to zero, while
$(\mathrm{V}_\Sigma)$ makes its multiplier converge to zero. Consequently
$R_k\to P_\Sigma$ in this case as well. Thus $S=0$ under either condition,
which proves \eqref{composite_cluster_pseudoinverse}.
\end{proof}
\subsection{Reachable Binary LASSO Derivatives
\corref{coro_LASSO_reachable}}
\label{ss_proof_LASSO_reachable}

\begin{proof}[\corref{coro_LASSO_reachable}]
Fix $\theta\in\mbR^m$. The map
$\varepsilon\mapsto\xsepst$ is definable because its graph is the graph of
the unique minimizer of the semialgebraic, strongly convex elastic-net
objective. Hence every $\rho_i(\cdot;\theta)$ in the corollary is definable.
The sets on which $\rho_i(\cdot;\theta)$ is positive, negative, or zero are
therefore definable subsets of $\mbR$. Since each has finitely many connected
components, exactly one of them contains all sufficiently small positive
$\varepsilon$.

By the soft-threshold formula in \secref{ss_LASSO}, the $i$th diagonal
entry of a matrix in $\mQ_\varepsilon(\theta)$ belongs respectively to
$\{1\}$, $\{0\}$, or $[0,1]$ in these three cases. It follows that every
binary diagonal matrix satisfying the coordinatewise condition of the corollary
belongs to $\mQ_\varepsilon(\theta)$ for all sufficiently small
$\varepsilon$. Moreover, \thmref{thm_Browder} gives
$\xsepst\to\minsol(\theta)$, and therefore
\[
    z_\varepsilon(\xsepst,\theta)
    \longrightarrow z_0(\minsol(\theta),\theta).
\]
The closed graph of the Clarke Jacobian then places such a binary matrix in
$\mQ_0(\theta)$, hence in
$\mR_{\mathrm{bin}}^{\mathrm{LASSO}}(\theta)$. Conversely, if
$Q_k\in\mQ_{\varepsilon_k}(\theta)$ converges to a binary diagonal matrix $Q$, the
eventual sign of each $\rho_i(\cdot;\theta)$ forces $Q_{ii}$ to have the
value stated in the corollary. This proves the characterization and, since
each of its coordinate sets is nonempty, the nonemptiness assertion.

Fix $Q\in\mR_{\mathrm{bin}}^{\mathrm{LASSO}}(\theta)$. The characterization
shows that $Q\in\mQ_\varepsilon(\theta)$ for every sufficiently small
$\varepsilon$. Choose any sequence $\varepsilon_k\downarrow0$ in this
range. By \corref{coro_LASSO_setup},
\[
    J_k
    :=
    \left(
    I_n-Q\big(I_n-\gamma(A\tp A+\varepsilon_kI_n)\big)
    \right)^{-1}
    \gamma QA\tp
    \in J_{\xs}(\theta,\varepsilon_k).
\]
If $Q=0$, then $J_k=0=(AQ)^\dagger$ for every $k$. Otherwise, define the
binary-one support
\[
    S_1:=S_1(Q):=\{i:Q_{ii}=1\}.
\]
The active-space formula in \secref{ss_LASSO}, with $Q_*=I_{|S_1|}$,
gives
\[
\begin{aligned}
    {[J_k]}_{S_1,:}
    &=
    \left(
    \varepsilon_kI_{|S_1|}
    +A_{:,S_1}\tp A_{:,S_1}
    \right)^{-1}A_{:,S_1}\tp,\\
    {[J_k]}_{S_1^c,:}&=0.
\end{aligned}
\]
An SVD of $A_{:,S_1}$ yields
\[
    {[J_k]}_{S_1,:}\longrightarrow A_{:,S_1}^\dagger,
    \qquad {[J_k]}_{S_1^c,:}\longrightarrow0,
\]
and the limiting matrix is exactly $(AQ)^\dagger$. Finally,
$\xs(\theta,\varepsilon_k)\to\minsol(\theta)$ by \thmref{thm_Browder}, so
\eqref{eq:J_min} gives $(AQ)^\dagger\in J_{\minsol}(\theta)$. This proves
\eqref{LASSO_reachable_derivatives}.
\end{proof}

\section{Detailed Convergence Rates}\label{s_detailed_rates}
\begin{corollary}
        [Estimates for $\varepsilon$ and $k$]\label{coro_estimates_delta} Let $\delta > 0$. Under the assumptions and notation of \remref{rem_iterate_holder}, if 
        \[0 < \varepsilon < \left(\frac{\delta}{C_0}\right)^{2p}\qquad \mathrm{and}\qquad k > \frac{\log((\delta - C_0\varepsilon^{1/(2p)}) / C_1)}{\log(1 - \gamma \varepsilon)},\]
        then $\|x^k_{\varepsilon}(\theta) - \minsol(\theta)\| \leq \delta$. 
\end{corollary}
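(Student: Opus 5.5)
The plan is to combine the two error sources of \remref{rem_iterate_holder} and make each smaller than its share of $\delta$. By that remark, for every $\varepsilon\leq\varepsilon^*$ and every $k$,
\[
\|x^k_{\varepsilon}(\theta)-\minsol(\theta)\|\leq C_1(1-\gamma\varepsilon)^k+C_0\varepsilon^{\frac{1}{2p}}.
\]
So it suffices to choose $\varepsilon$ such that the regularization error is strictly less than $\delta$. Then we choose $k$ such that the approximation error fits into the remaining slack $\delta-C_0\varepsilon^{1/(2p)}>0$.

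\textbf{Choice of $\varepsilon$.} The condition $\varepsilon<(\delta/C_0)^{2p}$ is equivalent, by monotonicity of $t\mapsto t^{1/(2p)}$ on $\mathbb{R}_+$, to $C_0\varepsilon^{1/(2p)}<\delta$. The slack is therefore positive and the logarithm in the bound on $k$ is well defined.

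We also need $\varepsilon\leq\varepsilon^*$, so that \thmref{thm_holder_bound} applies, and $0<\gamma\varepsilon<1$, so that $\log(1-\gamma\varepsilon)<0$. Both are implicit in ``the assumptions and notation of \remref{rem_iterate_holder}''. The second holds since $\gamma<2/(L+2\bar\eps)$ gives $\gamma\varepsilon<1$ for $\varepsilon\leq\bar\eps$. I would state this reduction explicitly, shrinking the admissible range of $\varepsilon$ if needed.

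\textbf{Choice of $k$.} We require
\[
C_1(1-\gamma\varepsilon)^k\leq\delta-C_0\varepsilon^{1/(2p)}.
\]
Taking logarithms, this reads $k\log(1-\gamma\varepsilon)\leq\log\big((\delta-C_0\varepsilon^{1/(2p)})/C_1\big)$. Dividing by the negative number $\log(1-\gamma\varepsilon)$ reverses the inequality, which is exactly the stated bound on $k$.

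If the ratio $(\delta-C_0\varepsilon^{1/(2p)})/C_1$ is at least $1$, its logarithm is nonnegative. In that case the bound on $k$ is nonpositive and trivially satisfied, and the inequality also holds for every $k\geq0$, because $(1-\gamma\varepsilon)^k\leq1$. Summing the two bounds gives $\|x^k_\varepsilon(\theta)-\minsol(\theta)\|\leq\delta$.

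\textbf{Main obstacle.} The only delicate point is bookkeeping rather than analysis. One must check that the sign conventions ($0<1-\gamma\varepsilon<1$) and the validity range $\varepsilon\leq\varepsilon^*$ of the Hölderian estimate are guaranteed. Otherwise the division by $\log(1-\gamma\varepsilon)$ or the use of \thmref{thm_holder_bound} is unjustified. I would handle this by explicitly intersecting the condition on $\varepsilon$ with $]0,\min(\varepsilon^*,\bar\eps)]$.
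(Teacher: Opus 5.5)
Your proposal is correct and follows essentially the same route as the paper: the first condition makes the regularization error $C_0\varepsilon^{1/(2p)}$ smaller than $\delta$, and taking logarithms and dividing by the negative $\log(1-\gamma\varepsilon)$ fits the approximation error into the remaining slack. Your explicit checks that $\varepsilon\leq\varepsilon^*$ and $0<\gamma\varepsilon<1$ (the latter from $\gamma<2/(L+2\bar\eps)$) are bookkeeping that the paper leaves implicit in ``the assumptions of'' \remref{rem_iterate_holder}.
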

\begin{proof}
    The first condition gives $C_0 \varepsilon^{1/(2p)} < \delta$, which deals with the regularization error. Now, the second assumption gives 
    \[k\log(1 - \gamma\varepsilon) < \log((\delta - C_0\varepsilon^{1/(2p)}) / C_1) \qquad \Longleftrightarrow \qquad C_1 (1 - \gamma\varepsilon)^k < \delta - C_0\varepsilon^{1/(2p)}\]
    which gives $C_1 (1 - \gamma\varepsilon)^k + C_0\varepsilon^{1/(2p)} < \delta$ using \remref{rem_iterate_holder}. 
\end{proof}
This show that this upper bound, despite not vanishing, can be made arbitrarily small but at the cost of small $\varepsilon$ and large $k$.

Consider the Least-Squares problem from \secref{s_C11}. Its quadratic error bound has exponent $p = 2$, but the closed form gives $O(\varepsilon)$ Tikhonov bias rather than the generic $O(\varepsilon^{1/4})$ estimate. If $A = 0$, $\xsepst = \minsol(\theta) = 0$ with zero derivatives. Otherwise, define respectively the operator norm and the minimal nonzero singular value of $A$ by
\[\lambda := \|A\|_{\mathrm{op}}, \qquad \sigma := \min\{\sigma_i(A) : \sigma_i(A) > 0\}.\]
\begin{proposition}
    [Complete rates for Least-Squares]\label{prop_rates_LS} Fix $\theta \in \Theta$, $\varepsilon > 0$ and let 
    \[D_{\varepsilon} := (A\tp A + \varepsilon I_n)^{-1}A\tp, \quad D_0 = A^{\dagger}.\]
    The solution and derivatives satisfy
    \[\begin{split}
        \|\xsepst - \minsol(\theta)\| & \leq \frac{\varepsilon}{\sigma(\sigma^2 + \varepsilon)} \|\Pi_{\range(A)}\theta\| \leq \frac{\varepsilon}{\sigma(\sigma^2 + \varepsilon)}\|\theta\|,\\
        \|D_{\varepsilon} - D_0\| & = \frac{\varepsilon}{\sigma(\sigma^2 + \varepsilon)} \leq \frac{\varepsilon}{\sigma^3}.
    \end{split}\]
\end{proposition}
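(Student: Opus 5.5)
The natural route is the singular value decomposition of $A$. It diagonalizes $D_\varepsilon$ and $D_0$ at the same time, so both estimates reduce to a one-dimensional comparison of the scalar maps $s\mapsto s/(s^2+\varepsilon)$ and $s\mapsto 1/s$ on the nonzero singular values.

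\emph{Step 1 (the derivative estimate).} Write $A=U\Sigma V\tp$ with nonzero singular values $\sigma_1\ge\dots\ge\sigma_r\ge\sigma>0$. Then
\[
D_\varepsilon=V(\Sigma\tp\Sigma+\varepsilon I_n)^{-1}\Sigma\tp U\tp,
\]
and its diagonal entries are $\sigma_i/(\sigma_i^2+\varepsilon)$ for $i\le r$ and $0$ otherwise. By \defref{def_pseudoinverse}, $D_0=A^\dagger=V\Sigma^\dagger U\tp$ has diagonal entries $1/\sigma_i$. Hence $D_\varepsilon-D_0=V\Delta U\tp$ with $\Delta$ diagonal, and
\[
\frac{\sigma_i}{\sigma_i^2+\varepsilon}-\frac1{\sigma_i}=-\frac{\varepsilon}{\sigma_i(\sigma_i^2+\varepsilon)}.
\]
Since $U$ and $V$ are orthogonal, $\|D_\varepsilon-D_0\|=\max_{i\le r}\varepsilon/(\sigma_i(\sigma_i^2+\varepsilon))$. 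The map $s\mapsto s(s^2+\varepsilon)$ is increasing on $]0,\infty[$, so the maximum is attained at the smallest nonzero singular value $\sigma$. This gives the stated equality, and dropping $\varepsilon$ in the denominator gives the bound $\varepsilon/\sigma^3$.

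\emph{Step 2 (the solution estimate).} From the motivating example, $\xsepst=D_\varepsilon\theta$ and $\minsol(\theta)=A^\dagger\theta=D_0\theta$, so $\xsepst-\minsol(\theta)=(D_\varepsilon-D_0)\theta$. Both $D_\varepsilon$ and $D_0$ vanish on $\ker(A\tp)=\range(A)^\perp$: in the SVD, both annihilate the columns of $U$ beyond index $r$. Therefore
\[
(D_\varepsilon-D_0)\theta=(D_\varepsilon-D_0)\Pi_{\range(A)}\theta,
\]
and the operator-norm bound from Step 1 yields the first inequality. The second inequality follows because an orthogonal projection does not increase the norm.

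There is no genuine obstacle in this argument. The only point requiring care is the monotonicity that places the maximum at $\sigma$ rather than at the largest singular value $\lambda$, together with handling the case $A=0$ separately (already excluded before the statement). The constant $\lambda$ plays no role in these bounds.
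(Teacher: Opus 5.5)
Your proposal is correct and follows essentially the paper's own route: an SVD of $A$ reduces both estimates to the scalar identity $\sigma_i/(\sigma_i^2+\varepsilon)-1/\sigma_i=-\varepsilon/(\sigma_i(\sigma_i^2+\varepsilon))$ on the nonzero singular values. The differences are cosmetic. You prove the operator-norm equality first, spelling out the monotonicity that places the maximum at $\sigma$ (the paper only says ``done similarly''), and then deduce the solution bound by factoring through $\Pi_{\range(A)}$, whereas the paper treats the solution directly.
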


\begin{proof}
    In this setting, the fixed-point mapping reads, for $\varepsilon > 0$ and suitable $\gamma > 0$, 
\[\varphi^{\gamma}_{\varepsilon}(x, \theta) = (I - \varepsilon\gamma I - \gamma A\tp A)x + \gamma A\tp \theta.\]
Let us first consider the iterates and show the first inequalities. Recall the closed-form expressions $\xsepst = (A\tp A + \varepsilon I_n)^{-1}A\tp \theta$ and $\minsol(\theta) = A^{\dagger}\theta$, using the singular value decomposition $A = U\Sigma V\tp$ one gets 
\[\|\xsepst - \minsol(\theta)\| = \|V ((\Sigma\tp\Sigma + \varepsilon I_n)^{-1}\Sigma\tp - \Sigma^{\dagger}) U\tp \Pi_{\range(A)}\theta\|\]
and, coordinate-wise, using that $\sigma_i^{\dagger} = 1/\sigma_i$ if $\sigma_i > 0$ and $0$ otherwise
\[\frac{\sigma_i}{\sigma_i^2 + \varepsilon} - \sigma_i^{\dagger} = \left\{ \begin{array}{cc} 0 & \textnormal{if}\ \sigma_i = 0, \\ -\varepsilon/(\varepsilon\sigma_i + \sigma_i^3) & \textnormal{if}\ \sigma_i > 0. \end{array} \right.\]
Thus we have 
\[\|\xsepst - \minsol(\theta)\| \leq \frac{\varepsilon}{\sigma(\sigma^2 + \varepsilon)} \|\Pi_{\range(A)} \theta\| \leq \frac{\varepsilon}{\sigma^3}\|\theta\|.\]
In particular, we have convergence in $\varepsilon^1$ instead of the generic estimate $\varepsilon^{1/4}$. Now, for the implicit derivatives, we must estimate $\|(A\tp A + \varepsilon I_n)^{-1}A\tp - A^{\dagger}\|$ which is done similarly and notably gives the same convergence rate. 
\end{proof}

\vskip 0.2in
\bibliography{biblio}

\end{document}